\documentclass[final]{siamart1116}

\usepackage{amsfonts,amsmath,amsbsy,amssymb,bm}
\usepackage{graphicx}
\graphicspath{{./figures/}}
\makeatletter
\def\input@path{{../}{./}}
\makeatother

\ifpdf
  \DeclareGraphicsExtensions{.eps,.pdf,.png,.jpg}
\else
  \DeclareGraphicsExtensions{.eps}
\fi

\definecolor{myBlue}{rgb}{0.0,0.0,0.55}
\usepackage{hyperref}
\hypersetup{
	plainpages=false,
    colorlinks,
    linktocpage=true,   
    linkcolor=myBlue,
    citecolor=myBlue,
    urlcolor=myBlue
}


\newtheorem{remark}[theorem]{Remark}

\newcommand{\dx}{\,{\rm d}x}
\newcommand{\dd}{\,{\rm d}}
\newcommand{\bs}{\boldsymbol}

\newcommand{\norm}[1]{\left\Vert#1\right\Vert}
\newcommand{\snorm}[1]{\left\vert#1\right\vert}
\makeatletter
\newcommand{\tnorm}{\@ifstar\@tnorms\@tnorm}
\newcommand{\@tnorms}[1]{%
\left|\mkern-2.5mu\left|\mkern-2.5mu\left|
#1
\right|\mkern-2.5mu\right|\mkern-2.5mu\right|
}
\newcommand{\@tnorm}[2][]{%
\mathopen{#1|\mkern-2.5mu#1|\mkern-2.5mu#1|}
#2
\mathclose{#1|\mkern-2.5mu#1|\mkern-2.5mu#1|}
}
\makeatother


\usepackage{comment,enumerate,multicol,xspace}

  \newcounter{mnote}
  \setcounter{mnote}{0}
  
  \let\oldmarginpar\marginpar
    \renewcommand\marginpar[1]{\-\oldmarginpar[\raggedleft\footnotesize #1]%
    {\raggedright\footnotesize #1}}

\numberwithin{theorem}{section}
\numberwithin{equation}{section}


\headers{Anisotropic Error Estimates of The Linear VEM}{Shuhao Cao and Long Chen}

\title{{Anisotropic Error Estimates of The Linear Virtual Element Method on 
Polygonal Meshes
}
\thanks{The authors are supported by the National Science 
Foundation Grant No. DMS-1418934.}}

\author{
  Shuhao Cao\thanks{Department of Mathematics, University of California Irvine, 
  Irvine, CA 92697 (\email{scao@math.uci.edu}, \email{chenlong@math.uci.edu}).}
  \and
  Long Chen\footnotemark[2]
}

\usepackage{amsopn}

\ifpdf
\hypersetup{
  pdftitle={Anisotropic Error Estimates of The Lowest Order VEM},
  pdfauthor={Shuhao Cao and Long Chen}
}
\fi


\begin{document}

\maketitle

\begin{abstract}
A refined a priori error analysis of the lowest order (linear) virtual 
element method (VEM) is developed for approximating a model two dimensional Poisson 
problem. A set of new geometric assumptions is proposed on the shape regularity of 
polygonal meshes. A new universal error equation for the lowest order 
(linear) VEM is derived for any choice of stabilization, and a new stabilization 
using broken half-seminorm is introduced to incorporate short edges 
naturally into the a priori error analysis on isotropic elements. The error analysis
is then extended to a special class of anisotropic elements with high
aspect ratio originating from a body-fitted mesh generator, which uses 
straight lines to cut a shape regular background mesh. Lastly, some commonly used 
tools for triangular elements are revisited for polygonal elements to give an 
in-depth view of these estimates' dependence on shapes.
\end{abstract}

\begin{keywords}
Virtual elements, polygonal finite elements, anisotropic error analysis
\end{keywords}

\begin{AMS}
65N12, 65N15, 65N30, 46E35
\end{AMS}

\section{Introduction}
\label{sec:intro}
To present the main idea, consider the weak formulation of the Poisson equation with 
zero Dirichlet boundary condition in a bounded two-dimensional Lipschitz domain 
$\Omega$: given an $f\in L^2(\Omega)$, find $u\in H_0^1(\Omega)$ such 
that
\begin{equation}\label{eq:weakform}
a(u,v) :=(\nabla u, \nabla v) = (f, v)\quad \forall v\in H_0^1(\Omega).
\end{equation}

To approximate problem~\eqref{eq:weakform}, $\Omega$ is decomposed into a sequence 
of polygonal meshes $\{ \mathcal T_h, h\in \mathcal H \}$. Every $\mathcal T_h$ 
consists of a finite number of simple polygons (i.e., open simply connected sets 
with non-self-intersecting polygonal boundaries). A finite dimensional approximation 
problem using virtual element method (VEM) is built upon $\mathcal{T}_h$. 
Here the subscript $h$ is the conventional notation for the mesh size: the maximum 
diameter of polygons in a mesh. The index set $\mathcal H$ contains a sequence of 
$\{h_n\}$ and $\lim_{n\to \infty}h_n = 0$. We are interested in the 
theoretical proof that the rate of convergence, of a VEM approximation 
measured under certain norm, is of what order of $h$ and the robustness to the 
geometry of the polygonal meshes.

VEM was first introduced in~\cite{Brezzi2013basic}.
Earlier error analyses of VEM 
(e.g.,~\cite{Brezzi2013basic,Mora;Rivera;Rodriguez:2015virtual})
assumed the shape regularity of the mesh, among which the most used assumptions are 
(1) every polygonal element is star-shaped with a uniform constant and (2) no short 
edge. 
VEM splits the approximation (local projections) and stability of the method into 
two terms. The star-shape assumption is mainly for the approximation property and no 
short edge is for the stability. Recently some VEM error analyses 
(see \cite{Beirao2017stability,Brenner;Sung:2018VEM}) established the stability of 
different 
choices of stabilization without the ``no short edge'' assumption. 
Moreover,~\cite{Chen;Huang:2018VEM} proposed an alternative way to perform the error 
analysis through a ``virtual'' triangulation, the idea of which can be traced back 
to regular decomposition condition in~\cite{Brezzi2009mimetic}. Further discussions 
about geometric conditions on polygons will be featured in Section \ref{sec:mesh}.

Meanwhile in many numerical tests, VEM performs robustly regardless of these 
seemingly artificial geometric constraints. For 
instances, it is shown numerically in~\cite{Beiraodaveiga2017high} that VEM 
converges in the optimal order on Voronoi meshes of which the control vertices are 
randomly generated. VEM's convergence is optimal irrelevant to the mesh cuts from 
the interfaces or fractures (see 
\cite{Benedetto;Berrone;Pieraccini;Scialo:2014virtual, Chen2017interface}). 
In~\cite{berrone2017orthogonal}, the authors illustrate that, 
locally on irregular elements, using the orthogonal polynomials, to 
which the local VEM space is projected, cures the globally 
instability arising from poor projection matrix conditioning of the 
conventional-scaled monomials with higher degrees. Similarly in 
\cite{Mascotto:2017conditioning,Mascotto;Dassi:2017Exploring}, VEM passes the 
so-called ``collapsing polygon test'' on certain types of irregular elements with 
aspect ratio unbounded, when choosing a good set of bases for the polynomials.

The purpose of this paper is to try to further develop the error analyses, inspired 
by previous references, fit for a broader class of polygonal meshes, thus justifying 
VEM's robustness.

First of all, we propose a new stabilization generalizing the scaled nodal 
difference originally proposed in~\cite{Wriggers2016virtual}. The inner product, 
inducing the broken $1/2$-seminorm on element boundaries, is used as the 
stabilization. This stabilization is elegantly simple for the linear VEM, 
which is the scenario of interest in this paper: for $u$, $v$ in the 
local VEM space in~\eqref{VEMspace} 
\begin{equation}\label{eq:difference}
S_{\mathcal{E}_K}(u, v) :=  \sum_{e\subset \partial K} 
\bigl(u(\bm{b}_e) - u(\bm{a}_e)\bigr) \bigl(v(\bm{b}_e) - v(\bm{a}_e)\bigr),
\end{equation}
in which $\bm{b}_e$ and $\bm{a}_e$ stand for the two end points of an edge $e$. The 
analysis based on~\eqref{eq:difference} matches well with an element boundary 
integral constraint for the local projection proposed in~\cite{Ahmad2013equivalent}. 

To allow a systematic anisotropic error analysis, we then propose a set of more 
``local-oriented'' assumptions inspired by the ones given 
in~\cite{Acosta2000error,Babuska.I;Aziz.A1976,Hannukainen2012,Wang2014wgmixed}. 
In~\cite{Wang2014wgmixed}, one assumption on geometrical properties of the polygonal 
meshes assumes the existence of a shape regular triangle 
based on each edge $e$ with height $l_e$, the size of which is uniform and 
comparable to the size of the underlying element. This type of assumption however 
rules out elements with high 
aspect ratio (anisotropy) and/or short edges. The reason is that an $L^2$-weighted 
trace inequality is used every time, when proving the error estimates involving 
stabilization with an $h_e^{-1}$ factor on the boundary of an element. We shall 
re-examine various trace inequalities separating the height $l_e$ from the 
length of the edge $h_e$, and propose a local height condition. As a result, 
short edge poses no problem as long as $l_e\geq c h_e$ for the error analysis on 
isotropic elements. Notice that the isotropic 
element class, under the conditions we propose in Section \ref{sec:mesh}, includes a 
more variety of polygons than the ones under the uniform star-shaped condition.

To be able to apply various trace inequalities on a long edge supporting a short 
height toward the underlying element, i.e., $l_e\ll h_e$, we shall embed an 
anisotropic element into a shape regular agglomerated element, 
e.g., a square of size $h_e$, and apply the trace inequalities in the direction of 
its shape regular neighboring element. Similar types of embedding can be traced back 
to \cite{Hannukainen2012} for finite element method, in that the convergence is 
intact 
if the approximation space on a coarse mesh, which satisfies the maximum angle 
condition~\cite{Babuska.I;Aziz.A1976}, is a subspace of the one defined on a fine 
but anisotropic mesh. However, 
for VEM a straightforward subspace argument is not valid, since the VEM spaces are 
not nested on meshes refined or coarsened from one another. 

For the approximation property of the polynomial projection, we propose an alternate 
shape regularity requirement of the mesh, utilizing the convex hull of a possible 
nonconvex element. For the interpolation error estimate, as the 
error measured in $H^1$-seminorm is transferred into the combination of that on 
boundary edges, pairing the edges will bring further cancelation and gives scales in 
different directions. For interpolation error that stems 
from the stabilization term, the nodal value difference~\eqref{eq:difference} of a 
linear polynomial (a linear VEM function restricted on boundaries) gives a 
tangential vector of this edge, which again leads to scales in different directions. 

To reveal the possible anisotropy 
arising from the VEM error analysis while fully exploiting these proof mechanism 
mentioned above, and to create a presentation easy to follow, we focus on 
the lowest order (linear) virtual element. We perform the \textit{a priori} error 
analysis directly on a weaker norm induced by the bilinear form, by writing out 
an identity of the error equation. In this way, we avoid to 
prove the norm equivalence for a stronger $H^1$-seminorm, which could 
possibly introduce more restrictive geometric constraints. Moreover, for 
the anisotropic error analysis, we restrict 
ourselves on a special class of polygonal meshes, originally obtained 
from cutting a rectangular (or a shape regular) background mesh using a set of 
straight lines (e.g.,~\cite{Chen2017interface}).

This paper is organized as follows: In Section \ref{sec:VEM}, the basics of VEM are 
covered, together with a new error equation and an \textit{a priori} error 
bound. Section \ref{sec:mesh} discusses the common geometric assumptions used in 
polygonal finite element literature, and proposes a new set of assumptions. In 
Section \ref{sec:iso}, we present the error analysis on isotropic element based on 
the new assumptions. In Section \ref{sec:aniso}, we study one class of a possible 
anisotropic element originated from a body-fitted mesh generator. Lastly in 
Appendices A-B, we revisit some conventional tools used in finite element, to learn 
the possible impacts from anisotropy on constants of some widely used inequalities, 
thus improving the error 
analysis. 

Throughout this paper, for a bounded Lipschitz domain $D$, we opt for the common 
notation $\norm{\cdot}_{0,D}$ and $(\cdot,\cdot)_D$ to denote the $L^2$-norm and 
$L^2$-inner product, respectively, and $|\cdot|_{s,D}$ to denote the 
$H^s(D)$-seminorm. When 
$D = \Omega$ is the whole domain, the subscript $\Omega$ will be omitted. 
The convex hull of $D$ is denoted as ${\rm conv}(D)$. 
For any element $K\in \mathcal{T}_h$, let $h_K = {\rm 
diam}(K)$, $|K| = \operatorname{meas}_2(K)$. Let $\mathcal N(K):= \{\bs a_1, \bs 
a_2, \ldots, \bs a_{n_K}\}$ be the set of vertices on $K$. For any edge $e\subset 
\partial K$, $h_e = {\rm diam}(e) = |e|$, $\bm{n}_e$ is the unit outward normal 
vector of $e$ to $K$, and the collection 
of the edges is $\mathcal{E}_K: = \{e\}_{e\subset \partial K}$, the subscripts 
$e$ and $K$ of which in the context may be omitted for simplicity. 
$n_{\mathcal{E}_K} := |\mathcal{E}_K|$ denotes the total number 
of edges on the boundary of an element $K$, and it equals $n_K$ which is the number 
of vertices in $K$. For any $L^1$-integrable 
function or vector field $v$, average of $v$ over the domain $D$ is denoted as 
$\overline{v}^{D}:=|D|^{-1}\int_{D}v$. The tangential derivative for 
any sufficiently regular function along a continuous curve $\Gamma$ is denoted as 
$\partial_{\Gamma} v:= \nabla v\cdot\bm{t}_\Gamma$, where $\bm{t}_\Gamma$ is the 
unit vector tangential to $\Gamma$ with a given orientation.

For convenience, $x \lesssim y$ and $z \gtrsim w$ are used to represent $x \leq c_1 
y$ and $z \geq c_2 w$, respectively, where $c_1$ and $c_2$ 
are two constants independent of the mesh size $h$. If these
constants depend on specific geometric properties of the domain that the underlying
quantities are defined on, which may happen, then such dependences, when they 
exist, shall be stated explicitly. Similarly, $a \eqsim b$ means  $a \lesssim b$ and 
$a \gtrsim b$.

\section{VEM}
\label{sec:VEM}
In this section we first introduce the linear virtual element discretization. We 
then derive a universal error equation for the difference of the VEM approximation 
to the nodal interpolation for any choice of stabilization, and present an \textit{a 
priori} error bound based on the new stabilization.

\subsection{Virtual element spaces}
The following local VEM space is introduced on a polygonal element $K$:
\begin{equation}
\label{VEMspace}
V_K := \{ v\in H^1(K): v|_{\partial K} \in B_1(\partial K), \;\Delta v =0 
\text{ in } K\},
\end{equation}
where the boundary space $B_1(\partial K)$ is defined as
\begin{equation}\label{VEMbdspace}
B_p(\partial K):= \{ v\in C^0(\partial K): v|_e\in \mathbb  P_p(e) \text{ for all } 
e\subset \partial K\},
\end{equation}
with $\mathbb  P_p(D)$ being the space of the polynomials of degree $\leq p$ defined 
on a domain $D$. 

Using the vertex values as degrees of freedom (d.o.f.), the local space is 
unisolvent (see \cite{Brezzi2013basic}), and the canonical interpolation 
in $V_K$ of $v\in H^1(K)\cap C^0(\overline{K})$ is defined as
\begin{equation}
\label{eq:intpV}
v \mapsto v_I\in V_{K} \; \text{ and }\;v_I(\bs a_i) = v(\bs a_i), \quad   \forall 
\bs a_i\in \mathcal  
N(K). 
\end{equation}
The global $H^1$-conforming virtual element space $V_h$ for problem 
\eqref{eq:weakform} is then defined as 
$V_h := \{v\in H_0^1(\Omega):\; v|_K\in V_K \}$.
The nodal interpolation of a function $v\in H^1_0(\Omega)\cap 
C^0(\overline{\Omega})$ is 
denoted by $v_I$, where $v_I\big\vert_{K} = (v|_K)_I$.

A basis of $V_K$ does not have to be represented explicitly in the computation 
procedure, in which the novelty is that the d.o.f.s are enough to produce an 
accurate and stable approximation. 

\subsection{Discretization using virtual element spaces}
The local bilinear form $a_{K}(u,v): = (\nabla u,\nabla v)_K$ for $u,v\in V_K$ 
cannot be computed exactly, since we do not know $u$ nor $\nabla u$ explicitly. 
Instead we shall compute an approximation. 

Define $\Pi_K^{\nabla}: V_K\to \mathbb  P_1(K)$ as a local projection in 
$H^1$-seminorm: given $v \in V_K$, define $\Pi_K^{\nabla} v \in \mathbb  P_1(K)$, 
which can be computed using the d.o.f. of $v$, such that
\begin{equation}\label{eq:pr}
\left(\nabla \bigl( v - \Pi_K^{\nabla} v \bigr), \nabla p\right)_K = 0, \quad 
\text{for all } p \in \mathbb  P_1(K).
\end{equation}
Note that by definition $\nabla \Pi_K^{\nabla} v = \overline{\nabla v}^K$. The 
constant kernel space of $\Pi_K^\nabla$ is then
eliminated by an extra constraint in the original VEM paper~\cite{Brezzi2013basic}:
$\sum_{i=1}^{n_K} \chi_i(v - \Pi_K^{\nabla} v) = 0$.
Taking into account the possible anisotropic nature of the element, inspired by 
\cite{Ahmad2013equivalent}, we shall use the following constraint 
\begin{equation}
\label{eq:constraint}
\int_{\partial K} \left(v -\Pi^{\nabla}_K v \right ) \dd s= 0.
\end{equation} 
Henceforth, we shall denote the local projection $\Pi^{\nabla}_K$ defined 
by~\eqref{eq:pr}--\eqref{eq:constraint} 
simply by $\Pi$, and when the projection is used in a 
global term on $\Omega$ or $\mathcal{T}_h$, it is piecewise-defined on each 
element $K$. If in a certain context this notation may give rise to ambiguity, the 
domain, upon which the projection is performed, will be explicitly mentioned.

Thanks to the local orthogonal projection~\eqref{eq:pr} in $H^1$-seminorm, the local 
continuous bilinear form $a_{K}(u,v)$ for $u,v\in V_K$ has the following split:
\begin{equation}\label{aKpi}
a_K(u,v) = \underbrace{a_K\bigl(\Pi u, \Pi  v\bigr)}_{(\mathfrak{c})} +
\underbrace{a_{K}\bigl(u-\Pi  u,v- \Pi v\bigr)}_{(\mathfrak{s})}.
\end{equation}
The \hyperref[aKpi]{$(\mathfrak{c})$} part of~\eqref{aKpi} is now explicitly 
computable, and is a decent approximation of $a_K(u, v)$. Yet it alone does not 
lead to a stable method. 

A stabilization term $S_K(\cdot,\cdot)$, matching the 
\hyperref[aKpi]{$(\mathfrak{s})$} part in~\eqref{aKpi} yet computable, will be added 
to gain the coercivity of the discretization. Therefore VEM is in fact a family of 
schemes different in the choice of stabilization terms. Define 
\begin{equation}
\label{eq:bl-vem}
a_h(u,v) :=\sum_{K\in \mathcal  T_h} a_K(\Pi u, \Pi v) + 
\sum_{K\in \mathcal  T_h} 
S_{K}\bigl(u-\Pi u,v- \Pi v\bigr).
\end{equation}
A VEM discretization of~\eqref{eq:weakform} is: find $u_h\in V_h$ such that
\begin{equation}\label{eq:VEMsol}
a_h( u_h, v_h) = (f, \Pi v_h) \qquad \forall v_h \in V_h.
\end{equation}
Notice that in the right-hand side, we opt to use the $H^1$-projection $\Pi v_h$ not 
the $L^2$ projection which is not computable for the linear VEM.

The principle of designing a stabilization is two-fold~\cite{Brezzi2013basic}: 
(1) Consistency. $S_K(u,v)$ should vanish when either $u$ or 
$v$ is in $\mathbb P_1(K)$. This is always true as in \eqref{eq:bl-vem} the slice 
operator $(\operatorname{I}-\Pi)$ is applied to the inputs of $S_K(\cdot,\cdot)$ 
beforehand. (2) Stability and continuity. $S_K(\cdot,\cdot)$ is chosen so that the 
following 
norm equivalence holds
\begin{equation}\label{eq:normequivalence}
a(u, u) \lesssim a_h(u, u) \lesssim a(u, u)  \qquad u\in V_h.
\end{equation}
Thorough error analysis based on~\eqref{eq:normequivalence} of several stabilization 
terms under certain geometric assumption on $\mathcal{T}_h$ can be found in 
\cite{Beirao2017stability,Brenner;Sung:2018VEM}.

In view of the orthogonal decomposition, i.e., setting $u=v$ in~\eqref{aKpi}, and 
the fact that $u-\Pi u$ is harmonic in $K$, the ideal choice of the stabilization 
would be the inner product that induces $1/2$-seminorm on $\partial K$: 
$(u-\Pi u, v-\Pi v)_{\frac{1}{2}, \partial K}$. Since the stabilization term then 
replicates the second term in the split~\eqref{aKpi}:
\begin{equation}\label{eq:minnorm}
|u-\Pi u|_{\frac12,\partial K} = \norm{\nabla (u-\Pi u)}_{0,K}.
\end{equation}
The identity~\eqref{eq:minnorm} is indeed a definition of the $1/2$-seminorm as 
the a quotient type norm using a harmonic extension and a trace theorem 
(\cite[Chapter 1, section 8]{Lions-Magenes}). We shall choose a computable 
definition of $1/2$-seminorm, which is equivalent 
with the one in~\eqref{eq:minnorm}: for any 
function $v$ defined locally on a hyperplane $\Gamma$ in $\mathbb{R}^2$, 
\begin{equation}
\label{eq:norm-half}
\snorm{v}_{\frac12,\Gamma}^2:= 
\int_{\Gamma} \int_{\Gamma} \frac{|v(\bm{x}) - v(\bm{y})|^2}{|\bm{x}-\bm{y}|^{2}} 
\dd s(\bm{x}) \dd s(\bm{y}).
\end{equation}
Here $\Gamma$ can be the whole $\partial K$ or a connected 
component being a subset of $\partial K$.

The inner product that induces $1/2$-seminorm on $\partial K$ is further decomposed 
into a broken one to be used as the stabilization $S_K(\cdot,\cdot)$ in 
\eqref{eq:bl-vem}, that is, 
\begin{equation}\label{eq:bilinear-brokenhalf}
S_{\mathcal{E}_K}\bigl(u, v\bigr) 
:= \sum_{e\subset \partial K}(u, v)_{\frac12, e},
\qquad u, v \in ( \operatorname{I} - \Pi)V_K,
\end{equation}
where on an edge $e$
\begin{equation}
(u, v)_{\frac12, e}: = \int_{e} \int_{e} \frac{\bigl(u(\bm{x}) - 
u(\bm{y})\bigr) \big( v(\bm{x}) - v(\bm{y}) \big)}{|\bm{x}-\bm{y}|^{2}} 
\dd s(\bm{x}) \dd s(\bm{y})
\end{equation}
For the linear VEM, as the function restricted on each edge is linear, the stabilization term features a very simple formula as follows:
\begin{equation}
\label{eq:difference-half}
S_{\mathcal{E}_K}(u, v) =  \sum_{e\subset \partial K} 
\bigl(u(\bm{b}_e) - u(\bm{a}_e)\bigr) \bigl(v(\bm{b}_e) - v(\bm{a}_e)\bigr),
\quad u, v\in V_K,
\end{equation}
in which $\bm{b}_e$ and $\bm{a}_e$ stand for the two end points of edge $e$.
We remark that this stabilization, in the case of linear VEM, is equivalent to a variant of the scaled nodal difference originally proposed in~\cite{Wriggers2016virtual} but philosophically different.

Note that by definition, $u\in H^{\frac12}(\partial K)$ measured under the 
broken $1/2$-seminorm induced by $S_{\mathcal{E}_K}(\cdot,\cdot)$ is always 
bounded by its $1/2$-seminorm on the whole boundary:
\begin{equation}
\label{eq:brokenhalfnorm}
|u|_{\frac12,\mathcal{E}_K}^2:= S_{\mathcal{E}_K}(u, u)= 
\sum_{e\subset \partial K}|u|_{\frac12,e}^2 \leq |u|_{\frac12, \partial K}^2. 
\end{equation}
However, the following inequality of the reverse direction, 
\begin{equation}\label{eq:broken2all}
|u|_{\frac12, \partial K}^2 \lesssim  \sum_{e\subset \partial K} 
|u|_{\frac12,e}^2=S_{\mathcal{E}_K}(u, u),
\end{equation}
which is usually a key component of the norm equivalence in the traditional VEM 
error analysis, is in general not true for an arbitrary function in 
$H^{\frac12}(\partial K)$ (e.g., see~\cite[Theorem 1.5.2.3]{Grisvard1985}). This 
reverse inequality~\eqref{eq:broken2all} does hold for continuous and piecewise 
polynomials~\cite{Cao2004pre}, but the constant hidden in~\eqref{eq:broken2all} 
depends on the geometry of $K$ and is not fully characterized. 

If we follow the classical approach of error analysis of VEM (see, 
e.g.,~\cite{Brezzi2013basic,Beirao2017stability}), by showing the 
norm equivalence~\eqref{eq:normequivalence}, geometric conditions on polygons such 
as shape regularity are indispensable in the proof mechanism. Instead, we shall 
present a different approach by writing out an error 
equation.

\subsection{Error equation}
In this subsection an error equation for $u_h - u_I$ is derived, where $u_h$ is the 
solution to the VEM approximation problem~\eqref{eq:VEMsol} and $u_I$ is the nodal 
interpolation~\eqref{eq:intpV}. A weaker mesh-dependent norm 
induced by the bilinear form~\eqref{eq:bl-vem}, with 
$S_{\mathcal{E}_K}(\cdot, \cdot)$ as the stabilization on each element $K$, 
can be defined as follows and shall be a key ingredient in our analysis:
\begin{equation}
\label{def:norm-t}
\tnorm{u}:= a_h^{1/2}(u,u) = \left\{\sum_{K\in \mathcal T_h}
\Big(\left\|\nabla \Pi  u \right\|_{0,K}^2 + 
\left|  u - \Pi u \right|_{\frac12,\mathcal{E}_K}^2 
\Big)\right\}^{1/2}.
\end{equation}

\begin{lemma}[a mesh-dependent norm]
 $\tnorm{\cdot}$ defines a norm on $V_h$.
\end{lemma}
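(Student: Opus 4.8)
The plan is to check the three defining properties of a norm on $V_h$: nonnegativity together with homogeneity, the triangle inequality, and positive definiteness. The first two come for free from the algebraic structure. By construction $a_h(\cdot,\cdot)$ is a symmetric bilinear form (each $\Pi$ is linear, $a_K(\cdot,\cdot)$ is a symmetric bilinear form, and $S_{\mathcal{E}_K}(\cdot,\cdot)$ is too), and it is positive semidefinite: on each $K$ the term $a_K(\Pi u,\Pi u)=\|\nabla\Pi u\|_{0,K}^2$ is nonnegative, and by the explicit formula \eqref{eq:difference-half}, $S_{\mathcal{E}_K}(u-\Pi u,u-\Pi u)=\sum_{e\subset\partial K}\bigl((u-\Pi u)(\bm{b}_e)-(u-\Pi u)(\bm{a}_e)\bigr)^2\ge 0$. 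Hence $\tnorm{u}=a_h(u,u)^{1/2}$ is well defined, $\tnorm{\lambda u}=|\lambda|\,\tnorm{u}$, and the Cauchy--Schwarz inequality for the semidefinite form $a_h$ yields $\tnorm{u+v}\le\tnorm{u}+\tnorm{v}$. So the only real content is definiteness.

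To show $\tnorm{u}=0\Rightarrow u=0$ for $u\in V_h$, I would argue locally. If $\tnorm{u}=0$ then on every $K\in\mathcal{T}_h$ both $\|\nabla\Pi u\|_{0,K}=0$ and $|u-\Pi u|_{\frac12,\mathcal{E}_K}=0$. Since $|K|>0$, the first forces $\nabla\Pi u\equiv 0$, so $\Pi u$ is a constant on $K$, in particular on $\partial K$. The second, by the linear-VEM evaluation \eqref{eq:difference-half} applied to $w=u-\Pi u$ (whose restriction to each edge is affine), forces $w$ to be constant on each edge $e\subset\partial K$; since $\Pi u$ is already constant, $u|_{\partial K}$ is constant on each edge, and being continuous on the closed polygonal loop $\partial K$ it is constant on all of $\partial K$. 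As $u|_K$ is the harmonic extension of this boundary datum, $u|_K$ equals that same constant.

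It then remains to globalize: a function in $V_h$ that is constant on each element is, by continuity of its traces across interior edges and connectedness of $\Omega$, a single global constant, and since it belongs to $H_0^1(\Omega)$ this constant must be $0$. Hence $u\equiv 0$, and $\tnorm{\cdot}$ is a norm.

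The only steps requiring genuine care lie in the second paragraph, where the vanishing of the broken half-seminorm is turned into the statement that $u$ is constant along $\partial K$: this uses both the closed-form expression \eqref{eq:difference-half}, valid for the linear VEM because each edge trace is affine, and the fact that $\partial K$ is a connected polygon, so that the per-edge constants are forced to agree at shared vertices. Everything else --- the seminorm inequalities and the concluding connectedness/$H_0^1$ argument --- is routine. An essentially equivalent alternative is to bypass the vertex bookkeeping and observe directly from \eqref{eq:difference-half} that $(u-\Pi u)|_e$ is constant on each edge, which together with the constancy of $\Pi u$ already gives that $u|_{\partial K}$ is constant.
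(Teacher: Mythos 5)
Your proof is correct, but it takes a genuinely different route from the paper's. The paper does not check definiteness directly: it proves the coercivity estimate \eqref{eq:norm}, namely $\norm{\nabla(u-\Pi u)}_{0,K}\leq C_K\left|u-\Pi u\right|_{\frac12,\partial K}\lesssim C_K\left|u-\Pi u\right|_{\frac12,\mathcal{E}_K}$, by viewing $u-\Pi u$ as the minimum-$H^1$-seminorm (harmonic) extension of its trace, invoking a standard extension theorem and the reverse broken-to-full inequality \eqref{eq:broken2all} for continuous piecewise polynomials; combined with the splitting \eqref{aKpi} this gives $\|\nabla u\|^2\lesssim a_h(u,u)$ on $V_h\subset H_0^1(\Omega)$, from which the norm property follows since $\|\nabla\cdot\|$ is already a norm there. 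You instead verify the norm axioms directly: homogeneity and the triangle inequality from the fact that $a_h$ is a symmetric positive semidefinite bilinear form, and definiteness by an elementwise kernel computation ($\nabla\Pi u=0$ forces $\Pi u$ constant; vanishing of \eqref{eq:difference-half} forces the affine edge traces of $u-\Pi u$ to be constant, hence $u$ constant on the connected $\partial K$ and, by uniqueness of the harmonic extension, on $K$; global continuity, connectedness of $\Omega$, and the zero boundary condition finish). Your argument is more elementary and constant-free --- it needs neither the extension theorem nor \eqref{eq:broken2all}, whose hidden constant $C_K$ depends on the geometry of $K$ --- but it proves only that $\tnorm{\cdot}$ is a norm, whereas the paper's argument additionally delivers the quantitative coercivity $a(u,u)\lesssim a_h(u,u)$ that is reused immediately afterwards in the remark on norm equivalence. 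Both proofs are valid; yours is the leaner one if the goal is solely the stated lemma.
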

\begin{proof}
For $u\in V_K$, as $u - \Pi u$ is harmonic in $K$, $u - \Pi u$ 
can be treated as $(u - \Pi u)\big|_{\partial K}$'s minimum norm 
extension in $|\cdot|_{1,K}$. As a result, the following estimate follows from a 
standard extension theorem (e.g.,~\cite[Theorem 2.5.7]{Necas1967},~\cite[Theorem 
4.1]{Xu1998ddm}) and the validity of~\eqref{eq:broken2all} for  
continuous and piecewise polynomials (see \cite{Cao2004pre})
\begin{equation}\label{eq:norm}
\| \nabla (u -  \Pi  u)\|_{0,K} 
\leq C_K \left| u -  \Pi u \right|_{\frac12, \partial K} 
\lesssim C_K \left| u - \Pi u \right|_{\frac12,\mathcal{E}_K}.
\end{equation}
By the splitting~\eqref{aKpi}, the coercivity $\|\nabla u\|^2 \lesssim  a_h(u, u)$ 
then holds $\forall u\in V_h$, from which we conclude that $\tnorm{\cdot}$ is a norm 
on $V_h\subset H^1_0(\Omega)$.
\end{proof}

\begin{remark}[norm equivalence]\rm Indeed~\eqref{eq:norm} implies 
$a(u,u)\lesssim a_h(u,u)$ and the equivalence of two definitions of 
$1/2$-norm,~\eqref{eq:minnorm}--\eqref{eq:norm-half} implies $a_h(u,u)\lesssim 
a(u,u)$. Therefore we obtain the norm equivalence~\eqref{eq:normequivalence} for 
$u\in V_h$. 
However, the constants involved in the norm equivalence depend on the geometry of 
$K$ and is not robust to the aspect ratio. Using $\tnorm{\cdot}$, a constant-free 
stability can be obtained. $\Box$
\end{remark}

\begin{lemma}[stability] \label{lemma:stability}
Given an $f =  -\Delta u \in L^2(\Omega)$ for some $u\in H_0^1(\Omega)$, problem 
\eqref{eq:VEMsol} is well posed and has the following constant-free stability
\begin{equation}\label{eq:stabilityidentity}
\tnorm{u_h} = \sup_{v_h \in V_h}\frac{\bigl(f, \Pi v_h\bigr)}{\tnorm{v_h}}.
\end{equation}
\end{lemma}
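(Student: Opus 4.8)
The plan is to establish well-posedness and the stability identity~\eqref{eq:stabilityidentity} essentially as an immediate consequence of the Riesz representation theorem, since $a_h(\cdot,\cdot)$ is by construction a symmetric bilinear form that is coercive and bounded in $\tnorm{\cdot}$. First I would observe that $a_h$ is symmetric: the consistency part $\sum_K a_K(\Pi u,\Pi v)$ is symmetric because $a_K$ is, and the stabilization part $\sum_K S_{\mathcal{E}_K}(u-\Pi u, v-\Pi v)$ is symmetric because each $(\cdot,\cdot)_{\frac12,e}$ is a genuine inner product on the finite-dimensional space $(\operatorname{I}-\Pi)V_K$. Since $\tnorm{u}^2 = a_h(u,u)$ by the very definition~\eqref{def:norm-t}, and the preceding lemma has already shown $\tnorm{\cdot}$ is a norm on $V_h$, the form $a_h$ is a symmetric positive-definite bilinear form on the finite-dimensional space $V_h$, hence an inner product inducing the norm $\tnorm{\cdot}$.

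Next I would address the right-hand side functional. For fixed $f = -\Delta u \in L^2(\Omega)$, the map $v_h \mapsto (f,\Pi v_h)$ is linear on $V_h$; it is automatically bounded since $V_h$ is finite-dimensional. Well-posedness of~\eqref{eq:VEMsol} then follows from the Lax--Milgram lemma (or simply from the fact that a symmetric positive-definite linear system is invertible): there is a unique $u_h \in V_h$ with $a_h(u_h,v_h) = (f,\Pi v_h)$ for all $v_h \in V_h$. For the stability identity, I would test~\eqref{eq:VEMsol} with $v_h = u_h$ to get $\tnorm{u_h}^2 = a_h(u_h,u_h) = (f,\Pi u_h)$, so $\tnorm{u_h} \le \sup_{v_h}(f,\Pi v_h)/\tnorm{v_h}$; conversely, for any $v_h \in V_h$, Cauchy--Schwarz in the $a_h$-inner product gives $(f,\Pi v_h) = a_h(u_h,v_h) \le \tnorm{u_h}\,\tnorm{v_h}$, whence the supremum is $\le \tnorm{u_h}$. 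Combining the two inequalities yields the claimed equality.

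The only genuine subtlety — and the step I expect to warrant a sentence of care — is why the linear functional $v_h \mapsto (f,\Pi v_h)$ is well defined, i.e. why $\Pi v_h$ is computable/meaningful and why $(f,\Pi v_h)$ makes sense: here $\Pi v_h \in \mathbb P_1(K)$ piecewise by definition~\eqref{eq:pr}--\eqref{eq:constraint}, so $\Pi v_h \in L^2(\Omega)$ and the pairing with $f \in L^2(\Omega)$ is legitimate. One should also note that the constant-free nature of~\eqref{eq:stabilityidentity} is exactly the point of working with $\tnorm{\cdot} = a_h^{1/2}(\cdot,\cdot)$ rather than with $\|\nabla\cdot\|_0$: no norm-equivalence constant enters because the norm is, tautologically, the energy norm of $a_h$. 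Everything else is the standard finite-dimensional Riesz/Lax--Milgram argument and needs no geometric hypothesis on the mesh.
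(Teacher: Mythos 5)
Your proposal is correct and is essentially the paper's argument: the paper simply notes that $\tnorm{\cdot}$ is a norm on the finite-dimensional space $V_h$, that $v_h\mapsto (f,\Pi v_h)$ is a continuous linear functional, and then invokes the Riesz representation theorem in the Hilbert space $(V_h, a_h(\cdot,\cdot))$, which is exactly the symmetric-positive-definite/Lax--Milgram reasoning you unpack. Your explicit two-sided inequality (testing with $v_h=u_h$ and Cauchy--Schwarz in the $a_h$-inner product) is just the standard proof of that representation theorem written out.
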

\begin{proof}
First $\tnorm{\cdot}$ defines a norm on $V_h$, and $\bigl(f, \Pi 
(\cdot)\bigr)$ is a linear functional on $V_h$, now since $V_h$ is finite 
dimensional, $\bigl(f, \Pi (\cdot)\bigr)$ is continuous with respect to 
$\tnorm{\cdot}$. Then the identity~\eqref{eq:stabilityidentity} follows from the 
Riesz representation theorem.
\end{proof}

The \textit{a priori} error analysis shall be carried out for $\tnorm{u_I - u_h}$ by 
writing out first the following error equation. 
\begin{theorem}[an error equation]\label{theorem:error-eq}
Let $u_h$ be the solution to~\eqref{eq:VEMsol} and $u_I$ be the nodal 
interpolation defined in~\eqref{eq:intpV}.  
For any stabilization $S_K(\cdot,\cdot)$, the following error
representation holds for any $v_h\in V_h$:
\begin{equation}
\label{eq:error}
\begin{aligned}
a_h\bigl(u_h - u_I, v_h\bigr) & = 
\sum_{K\in \mathcal{T}_h} \left( \nabla \Pi  (u- u_I), 
\nabla \Pi  v_h\right)_K 
\\
& \; - \sum_{K\in \mathcal{T}_h} S_K\left( u_I - \Pi  u_I,
v_h - \Pi  v_h \right)
\\
 & \; + \sum_{K\in \mathcal{T}_h} {\left\langle
  \nabla\big(u - \Pi  u \big)
\cdot 
\bs{n}, v_h - \Pi v_h \right\rangle}_{\partial K}.
\end{aligned}
\end{equation}
\end{theorem}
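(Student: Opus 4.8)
The plan is to transform the defining equation $a_h(u_h,v_h)=(f,\Pi v_h)$ of \eqref{eq:VEMsol} term by term until it matches the right-hand side of \eqref{eq:error}. First, by bilinearity, $a_h(u_h-u_I,v_h)=(f,\Pi v_h)-a_h(u_I,v_h)$, and by the definition \eqref{eq:bl-vem}, $a_h(u_I,v_h)=\sum_{K}a_K(\Pi u_I,\Pi v_h)+\sum_{K}S_K(u_I-\Pi u_I,v_h-\Pi v_h)$. The stabilization sum already appears verbatim in \eqref{eq:error} and is left untouched from here on, which is exactly why the identity is insensitive to the choice of $S_K$. It therefore remains to establish $(f,\Pi v_h)=\sum_{K}a_K(\Pi u,\Pi v_h)+\sum_{K}\langle\nabla(u-\Pi u)\cdot\bs n,\,v_h-\Pi v_h\rangle_{\partial K}$, since then $a_K(\Pi u,\Pi v_h)-a_K(\Pi u_I,\Pi v_h)=(\nabla\Pi(u-u_I),\nabla\Pi v_h)_K$ by linearity of $\Pi$ closes the argument.

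For the key step I would use $f=-\Delta u$ together with the fact that $\Pi v_h$ is, elementwise, a polynomial in $\mathbb P_1(K)$. Writing $(f,\Pi v_h)=\sum_{K}(-\Delta u,\Pi v_h)_K$ and integrating by parts on each $K$ gives $(-\Delta u,\Pi v_h)_K=(\nabla u,\nabla\Pi v_h)_K-\langle\nabla u\cdot\bs n,\Pi v_h\rangle_{\partial K}$. Since $\nabla\Pi v_h=\overline{\nabla v_h}^K$ is constant on $K$ and $\Pi$ extends to $H^1(K)$ with $\nabla\Pi w=\overline{\nabla w}^K$, the volume term equals $(\nabla\Pi u,\nabla\Pi v_h)_K=a_K(\Pi u,\Pi v_h)$.

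The remaining boundary contribution is disposed of by two elementary cancellations. First, since $v_h\in H^1_0(\Omega)$ and $\nabla u\in H(\div,\Omega)$, summing the elementwise Green formulas and using that $v_h$ is single-valued on interior edges while $\bs n$ reverses orientation yields $\sum_{K}\langle\nabla u\cdot\bs n,v_h\rangle_{\partial K}=(\div\nabla u,v_h)+(\nabla u,\nabla v_h)=-(f,v_h)+a(u,v_h)=0$; hence $\sum_{K}\langle\nabla u\cdot\bs n,\Pi v_h\rangle_{\partial K}=-\sum_{K}\langle\nabla u\cdot\bs n,v_h-\Pi v_h\rangle_{\partial K}$. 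Second, $\nabla\Pi u$ is constant on $K$, so the divergence theorem gives $\langle\nabla\Pi u\cdot\bs n,v_h-\Pi v_h\rangle_{\partial K}=\nabla\Pi u\cdot\int_K\nabla(v_h-\Pi v_h)=0$, because $\int_K\nabla\Pi v_h=|K|\,\overline{\nabla v_h}^K=\int_K\nabla v_h$. Adding this vanishing term converts $\nabla u\cdot\bs n$ into $\nabla(u-\Pi u)\cdot\bs n$, producing precisely the last sum in \eqref{eq:error}.

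Assembling the pieces gives the claimed identity. The only point that requires care is the low regularity of $u$: with merely $u\in H^1_0(\Omega)$ and $-\Delta u\in L^2(\Omega)$, the elementwise integration by parts and the interelement flux cancellation must be read as $H(\div)$--$H^{1/2}$ duality pairings rather than classical boundary integrals, which is legitimate because $\nabla u|_K\in H(\div,K)$ (as $\div\nabla u=-f\in L^2(K)$) and the traces of $v_h$ and $\Pi v_h$ on each $\partial K$ lie in $H^{1/2}(\partial K)$; everything else is linear bookkeeping.
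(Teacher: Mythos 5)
Your proposal is correct and follows essentially the same route as the paper: elementwise integration by parts of $(f,\Pi v_h)=-\sum_K(\Delta u,\Pi v_h)_K$, cancellation of $\sum_K\langle\nabla u\cdot\bs n, v_h\rangle_{\partial K}$ via flux continuity and $v_h\in H^1_0(\Omega)$, and insertion of the vanishing term $\langle\nabla\Pi u\cdot\bs n, v_h-\Pi v_h\rangle_{\partial K}=(\nabla\Pi u,\nabla(v_h-\Pi v_h))_K=0$. The only differences are cosmetic: you spell out the interelement cancellation and the $H(\div)$--$H^{1/2}$ duality justification that the paper leaves implicit.
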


\begin{proof}
For any $v_h\in V_h$, using the VEM discretization~\eqref{eq:VEMsol} being stable 
(Lemma \ref{lemma:stability}), the underlying PDE $-\Delta u = f$, integration 
by parts element-wisely, and $\nabla u\cdot \bs n$ being continuous across 
interelement boundaries, we have:
\begin{equation}\label{eq:err-1}
\begin{aligned}
& a_h(u_h, v_h) - a_h(u_I, v_h)= -\sum_{K\in \mathcal{T}_h} \bigl(\Delta u, 
\Pi  v_h \bigr)_K - a_h(u_I, v_h)
\\
= & \;\sum_{K\in \mathcal{T}_h} \bigl(  \nabla u , \nabla \Pi  v_h \bigr)_K
- \sum_{K\in \mathcal{T}_h} \left\langle   \nabla u \cdot \bs{n}, 
\Pi  v_h \right\rangle_{\partial K} - a_h(u_I, v_h)
\\
=& \;\sum_{K\in \mathcal{T}_h} \left(   \nabla \Pi  (u- u_I), 
\nabla \Pi  v_h\right)_K - \sum_{K\in \mathcal{T}_h} S_K\left( u_I - 
\Pi  u_I,
v_h - \Pi  v_h \right)
\\
&+ \sum_{K\in \mathcal{T}_h} \left\langle   \nabla u \cdot \bs{n}, 
v_h - \Pi  v_h \right\rangle_{\partial K}.
\end{aligned}
\end{equation}
For the last boundary integral term in~\eqref{eq:err-1}, the final error 
equation~\eqref{eq:error} follows from exploiting the definition 
of the projection $\Pi = \Pi^{\nabla}_K$ in~\eqref{eq:pr} and $\Delta \Pi u=0$:
\begin{equation}
\left\langle \nabla \Pi  u \cdot \bs{n}, 
v_h - \Pi  v_h \right\rangle_{\partial K}= \big(\nabla \Pi  
u, \nabla(v_h - \Pi  v_h) \big)_K = 0.
\end{equation}
\end{proof}

We emphasize again that this is an identity for any choice of stabilization terms, 
under which the solution $u_h$ exists for problem~\eqref{eq:VEMsol}. 
To get a meaningful convergence result, however, we need a 
stabilization to be able to control the boundary term 
$\left\langle  \nabla \bigl(u-\Pi  u\bigr) \cdot \bs{n},  v_h - 
\Pi v_h \right\rangle_{\partial K}$ and meanwhile $u_I - \Pi u_I$ is of optimal 
order measured under the seminorm induced by $S_K(\cdot,\cdot)$.

\begin{corollary}[an \textit{a priori} error bound]\label{corollary:errbound}
Under the same setting with Theorem \ref{theorem:error-eq}, for the stabilization 
$S_{\mathcal{E}_K}(\cdot,\cdot)$ in~\eqref{eq:difference-half}, the following 
estimate holds:
\begin{equation}
\label{eq:err-apriori}
\begin{aligned}
\tnorm{u_h-u_I}\lesssim & \;\Bigg \{\sum_{K\in \mathcal T_h}  \Big(\|\nabla 
\Pi  (u- 
u_I)\|_{0,K}^2 + \left|u_I - \Pi u_I\right|_{\frac12,\mathcal{E}_K}^2 
\\
&\;+ n_{\mathcal{E}_K}^2 \sum_{e\subset \partial K}
h_e\norm{\nabla\big(u - \Pi u \big)\cdot \bs{n}}_{0,e}^2 \Big)
\Bigg\}^{1/2}.
\end{aligned}
\end{equation}
\end{corollary}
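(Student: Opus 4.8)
The plan is to test the error equation~\eqref{eq:error} of Theorem~\ref{theorem:error-eq}, with the choice $S_K = S_{\mathcal{E}_K}$ of~\eqref{eq:difference-half}, against $v_h = u_h - u_I \in V_h$. Then the left-hand side becomes $a_h(u_h-u_I, u_h-u_I) = \tnorm{u_h-u_I}^2$ by the definition~\eqref{def:norm-t}, so it suffices to bound each of the three terms on the right-hand side of~\eqref{eq:error} by one of the three quantities appearing under the square root in~\eqref{eq:err-apriori} times $\tnorm{u_h-u_I}$, and then to divide through (the estimate being trivial when $\tnorm{u_h-u_I}=0$). Throughout I will use, directly from~\eqref{def:norm-t}, that $\sum_{K}\|\nabla\Pi v_h\|_{0,K}^2 \le \tnorm{v_h}^2$ and $\sum_{K}|v_h-\Pi v_h|_{\frac12,\mathcal{E}_K}^2 \le \tnorm{v_h}^2$ for any $v_h \in V_h$.

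The first two terms are routine. For the first, a Cauchy--Schwarz inequality on each $K$ followed by a discrete one over $\mathcal{T}_h$ gives $\sum_{K}(\nabla\Pi(u-u_I), \nabla\Pi v_h)_K \le \big(\sum_K\|\nabla\Pi(u-u_I)\|_{0,K}^2\big)^{1/2}\tnorm{v_h}$. For the second, $S_{\mathcal{E}_K}(\cdot,\cdot)$ is a symmetric positive semidefinite bilinear form whose induced seminorm is exactly $|\cdot|_{\frac12,\mathcal{E}_K}$ by~\eqref{eq:brokenhalfnorm}, so the Cauchy--Schwarz inequality for $S_{\mathcal{E}_K}$ and then one over $\mathcal{T}_h$ give $\big|\sum_K S_{\mathcal{E}_K}(u_I-\Pi u_I, v_h-\Pi v_h)\big| \le \big(\sum_K|u_I-\Pi u_I|_{\frac12,\mathcal{E}_K}^2\big)^{1/2}\tnorm{v_h}$.

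The substance of the proof is the boundary term $\sum_K\langle\nabla(u-\Pi u)\cdot\bm{n}, v_h-\Pi v_h\rangle_{\partial K}$, which is where the factor $n_{\mathcal{E}_K}$ enters. Fix $K$ and write $w := (v_h-\Pi v_h)|_{\partial K}$: it is continuous, linear on each edge, and satisfies $\int_{\partial K} w \dd s = 0$ by the constraint~\eqref{eq:constraint}. On an edge $e$ with endpoints $\bm{a}_e,\bm{b}_e$, Cauchy--Schwarz together with the linearity of $w|_e$ gives $\big|\langle\nabla(u-\Pi u)\cdot\bm{n}, w\rangle_e\big| \le \|\nabla(u-\Pi u)\cdot\bm{n}\|_{0,e}\|w\|_{0,e}$ with $\|w\|_{0,e}^2 \le \frac{h_e}{2}\big(w(\bm{a}_e)^2 + w(\bm{b}_e)^2\big)$. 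Summing over $e \subset \partial K$ and applying a weighted discrete Cauchy--Schwarz (in which each vertex is counted by its two incident edges) yields
\begin{equation*}
\big|\langle\nabla(u-\Pi u)\cdot\bm{n}, w\rangle_{\partial K}\big|
\;\le\; \Big(\sum_{e\subset\partial K}h_e\|\nabla(u-\Pi u)\cdot\bm{n}\|_{0,e}^2\Big)^{1/2}
\Big(\sum_{\bm{a}_i\in\mathcal{N}(K)} w(\bm{a}_i)^2\Big)^{1/2}.
\end{equation*}
It remains to prove the discrete Poincar\'e-type estimate $\sum_{\bm{a}_i\in\mathcal{N}(K)} w(\bm{a}_i)^2 \lesssim n_{\mathcal{E}_K}^2\,|w|_{\frac12,\mathcal{E}_K}^2$. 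For this I would use that, since $w|_e$ is linear, $|w|_{\frac12,\mathcal{E}_K}^2 = \sum_{e\subset\partial K}\big(w(\bm{b}_e)-w(\bm{a}_e)\big)^2$ by~\eqref{eq:brokenhalfnorm}--\eqref{eq:difference-half}, i.e.\ it is the sum of squared consecutive increments of the cyclic list of vertex values; hence any $w(\bm{a}_i)-w(\bm{a}_j)$ is a sum of at most $n_{\mathcal{E}_K}$ such increments and $|w(\bm{a}_i)-w(\bm{a}_j)|^2 \le n_{\mathcal{E}_K}\,|w|_{\frac12,\mathcal{E}_K}^2$ by Cauchy--Schwarz. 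Reading the relation $\int_{\partial K}w\dd s = 0$ as a vanishing weighted average $\sum_i\mu_i w(\bm{a}_i) = 0$ with positive weights $\mu_i = \frac12(h_{e_{i-1}}+h_{e_i})$, one gets $w(\bm{a}_j) = \big(\sum_i\mu_i\big)^{-1}\sum_i\mu_i\big(w(\bm{a}_j)-w(\bm{a}_i)\big)$, so $w(\bm{a}_j)^2 \le \max_i|w(\bm{a}_j)-w(\bm{a}_i)|^2 \le n_{\mathcal{E}_K}\,|w|_{\frac12,\mathcal{E}_K}^2$; summing over the $n_{\mathcal{E}_K}$ vertices gives the claim.

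Putting the three bounds together, summing over $K$, and applying one more discrete Cauchy--Schwarz on the boundary contribution (pairing $n_{\mathcal{E}_K}\big(\sum_{e\subset\partial K}h_e\|\nabla(u-\Pi u)\cdot\bm{n}\|_{0,e}^2\big)^{1/2}$ against $|v_h-\Pi v_h|_{\frac12,\mathcal{E}_K}$ and using $\sum_K|v_h-\Pi v_h|_{\frac12,\mathcal{E}_K}^2 \le \tnorm{v_h}^2$) bounds $\tnorm{u_h-u_I}^2$ by exactly the square root appearing on the right-hand side of~\eqref{eq:err-apriori} times $\tnorm{u_h-u_I}$, up to a fixed factor coming from $a+b+c \le \sqrt3\,(a^2+b^2+c^2)^{1/2}$; dividing by $\tnorm{u_h-u_I}$ yields~\eqref{eq:err-apriori}. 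I expect the only genuinely delicate step to be the discrete Poincar\'e inequality above, together with keeping track of the exact power of $n_{\mathcal{E}_K}$ it produces; the remaining manipulations are a chain of Cauchy--Schwarz inequalities.
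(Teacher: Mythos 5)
Your proposal is correct and follows essentially the same route as the paper: test the error equation with $v_h=u_h-u_I$, apply Cauchy--Schwarz to the three terms, and control the boundary term through the zero-average constraint~\eqref{eq:constraint} and a Poincar\'e-type inequality in the broken $1/2$-seminorm producing the $n_{\mathcal{E}_K}^2$ factor, exactly as in~\eqref{eq:stab-shortedge}. The only difference is that where the paper invokes Lemma~\ref{lemma:poincare-brokenhalf} from Appendix~B (proved via the mean value theorem and a telescoping path integral along $\partial K$), you give a self-contained discrete argument on the vertex values of the piecewise linear trace, reading $\int_{\partial K}w\,{\rm d}s=0$ as a vanishing weighted vertex average; both yield the same power of $n_{\mathcal{E}_K}$.
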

\begin{proof}
Let $v_h = u_h - u_I\in  V_h$ in the error representation 
\eqref{eq:error}; applying the Cauchy--Schwarz inequality on the three terms, 
respectively, yields:
\begin{equation}
\label{eq:error-1}
\begin{aligned}
\tnorm{u_h-u_I}^2 & \leq 
\sum_{K\in \mathcal{T}_h} \norm{\nabla \Pi  (u- u_I)}_{0,K}
\norm{\nabla \Pi  v_h}_{0,K} 
\\
& \; + \sum_{K\in \mathcal{T}_h} 
\left|u_I - \Pi  u_I\right|_{\frac12,\mathcal{E}_K}\,
\left|v_h - \Pi  v_h\right|_{\frac12,\mathcal{E}_K}
\\
 & \; + \sum_{K\in \mathcal{T}_h} \sum_{e\subset\partial K}
h_e^{1/2}\norm{\nabla\big(u - \Pi  u \big)\cdot \bs{n}}_{0,e}\,
h_e^{-1/2}\norm{v_h - \Pi  v_h}_{0,e}.
\end{aligned} 
\end{equation}
For the last term above, $\overline{(v_h - \Pi v_h)}^{\partial K} = 0$ by the 
constraint in~\eqref{eq:constraint}. As a result, the error bound follows from the 
Cauchy--Schwarz inequality, and applying the Poincar\'e inequality 
\eqref{eq:poincare-brokenhalf} on each $e$: 
recall that $n_{\mathcal{E}_K}$ 
is the number of edges on $\partial K$,
\begin{equation}
\label{eq:stab-shortedge}
\sum_{e\subset \partial K} h_e^{-1}\norm{v_h - \Pi  v_h}_{0,e}^2
\leq n_{\mathcal{E}_K}^2 \left|v_h - \Pi v_h\right|_{\frac12,\mathcal{E}_K}^2.
\end{equation}
\end{proof}
In later sections, we shall estimate the three terms in~\eqref{eq:err-apriori} based 
on certain geometric assumptions.

\section{Geometric assumptions on polygonal meshes}
\label{sec:mesh}
Some aforementioned error analyses of VEM (see Section \ref{sec:intro})
are based on the following assumptions on a polygonal mesh $\mathcal{T}_h$ in 
2-D cases:
\begin{enumerate}
\item[{\bf C1}.] \label{asp:C}There exists a real number $\gamma_1>0$ such that, for 
each element $K\in \mathcal {T}_h$, it is star-shaped with respect to a disk of 
radius $\rho_K\ge\gamma_1 h_K$.

\item[{\bf C2}.] There exists a real number $\gamma_2>0$ such that, for each element 
$K\in \mathcal{T}_h$, the distance between any two vertices of $K$ is $\ge \gamma_2 
h_K$.
\end{enumerate}

We shall refer to {\bf C1} as the $\gamma_1$-shape regular condition and {\bf C2} as 
the no short edge condition. Assumption {\bf C1} rules out polygons with high aspect 
ratio, which shall be called as anisotropic element. Equivalently the current error 
estimate is not robust to the aspect ratio of $K$. Assumption {\bf C2} rules out 
edges with small length which may exists, for example, in polygons of Voronoi 
tessellation (see \cite{du1999centroidal,talischi2012polymesher}). A shape regular 
polygon may have short edges. A polygon with similar edge lengths may not be shape 
regular. In particular, triangles/quadrilaterals and tetrahedra satisfying {\bf C2} 
but not {\bf C1} are known as slivers, which are problematic in finite 
element simulations. Yet in practice (see section 
\ref{sec:intro}), VEM is robust even when {\bf C1},  and/or {\bf C2} are violated.  

Next, 
we shall propose a set of geometry conditions on polygonal meshes based on the 
following local quantities for an edge. For an edge $e\subset \partial K$, we choose 
a local Cartesian coordinate system with $x$-axis aligning with $e$, and positive 
$y$-direction to be the inward normal of $e$. Define
\begin{equation}
\label{eq:local-de}
\delta_e := \inf \Big\{\delta\in \mathbb{R}^+ : K\cap \big(e\times 
(\delta,+\infty)\big) = \varnothing \Big\}.
\end{equation} 
For each edge $e\subset \partial K$, an inward height $l_e$ associated 
with edge $e$ can be defined as follows. Let $T(e,l)$ be any triangle with base $e$ 
and height $l$,
\begin{equation}
\label{eq:local-le}
l_e := \sup \Big\{l\in \mathbb{R}^+ : \exists\, T(e,l) \subset K\cap \big(e\times 
(0, \delta_e]\big) \Big\}.
\end{equation}
As $K$ is nondegenerate and bounded, $0< \delta_e < +\infty$ and $0< l_e \leq  
\delta_e$. This height $l_e$ measures how far from the edge $e$ one can advance to 
the interior 
of $K$ in its inward normal direction. The rectangle $e\times(0, \delta_e]$ is used 
to ensure that the two side angles adjacent to $e$ of triangle $T_e = T(e,l_e)$ are 
bounded by $\pi/2$. See Figure~\ref{fig:localheight}(a).

\begin{figure}[htp]
\centering
\includegraphics[width = 0.6\textwidth]{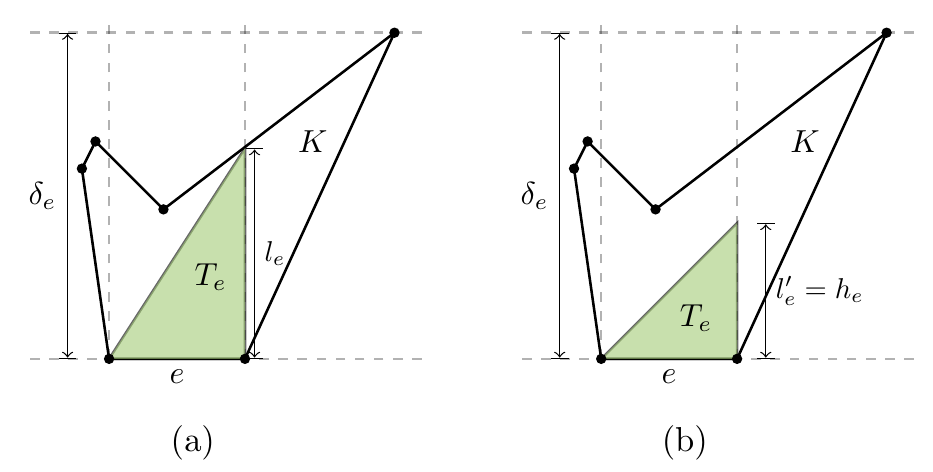}
\caption{
{\rm(a)} There exists $\gamma>1$ such that $l_e\geq \gamma h_e$. {\rm(b)} $T_e$ is 
rescaled with height $l_e' = h_e$.
}
\label{fig:localheight}
\end{figure}
\begin{remark}\rm
It would be more meaningful to use notation $h_e$ for the height and $l_e$ for 
the length of $e$. However, by the convention of finite element analysis, $h_e$ has 
been reserved for the length. $\Box$
\end{remark}

$\mathcal{T}_h$ is said to satisfy a certain assumption, if every $K\in 
\mathcal{T}_h$ satisfies that assumption with respect to its constant uniformly as 
$h\to 0$. An element $K$ is said to be {\em isotropic} if the following 
assumptions ({\bf A1-A2}) both hold for an element $K$:
\smallskip
\begin{itemize}
\item[{\bf A1.}] \label{asp:A} There exists a constant $C_1>0$ such that the number 
of edges $n_{\mathcal{E}_K} \leq C_1$.
\smallskip
\item[{\bf A2.}] There exists a constant $\gamma>0 $ such that 
$\forall e\subset \partial K$, $l_e \geq \gamma h_e$.  
\end{itemize}
\smallskip
Without loss of generality, one can assume that {\bf A2} holds with $0<\gamma\leq 1$ 
when using {\bf A2} as a premise of a certain proposition. The 
reason is that, when {\bf A2} holds, one can always rescale the height 
$l_e$ to $l'_e = \gamma' h_e$, for any $0<\gamma'\leq \gamma$, while a new $T_e 
= T(e,l'_e)$ still satisfies $T_e\subset K$. When $\gamma> 1$, we can simply set 
$\gamma'=1$ to be the new $\gamma$. See the illustration in Figure 
\ref{fig:localheight}(b).

Now the anisotropicity of an element $K$ can be characterized by: for one or more 
edges $e\subset \partial K$, either $l_e \ll h_e$ or $h_e \ll l_e$. As the upcoming 
analysis has shown, the case $h_e \ll l_e$, i.e., a short edge, is allowed as long 
as {\bf A2} holds. If $h_e \ll l_e$, one can use the rescaling argument above to 
obtain a smaller $T_e = T(e,l_e)$ so that $l_e\eqsim h_e$. In proving the error 
estimates, $l_e \geq \gamma h_e$ in {\bf A2} is needed when using trace inequalities 
(Lemmas \ref{lemma:iso-interp}, \ref{lemma:isopr-nd}, and \ref{lemma:iso-stab}) or a 
generalized Poincar\'e inequality (Lemma \ref{lemma:aniso-projnd}).  
Assumption {\bf A1} is needed, otherwise one can always artificially divide a long 
edge into short edges to satisfy {\bf A2}.

The case $l_e \ll h_e$ is difficult, because the lack of room inside the element 
makes impossible a smooth extension of functions defined on the boundary. When {\bf 
A2} is not met, it is possible to get a robust error estimate 
by embedding an anisotropic element in a shape regular one, and separating $h_e$ 
and $l_e$ in the refined error analysis.

\begin{figure}[h]
\centering
\includegraphics{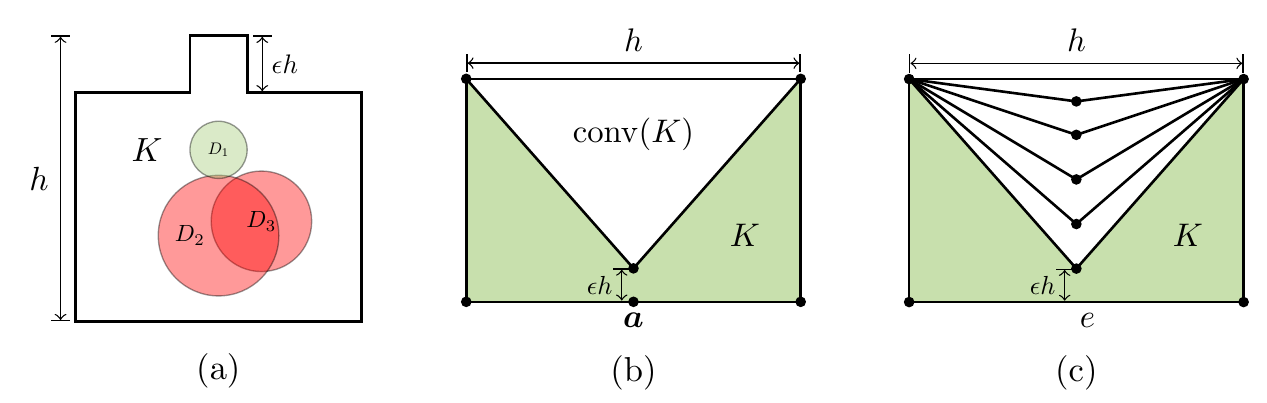}
\caption{
{\rm(a)} The isotropic $K$ is only star-shaped with respect to 
$D_1$ with 
radius $\epsilon h$, not to $D_2$ or $D_3$. {\rm(b)} The isotropic $K$ has the 
infamous hourglass shape, but VEM performs robustly on it thanks to the vertex 
$\bm{a}$ and a bounded number of elements in its convex hull $\mathrm{conv}(K)$. 
{\rm(c)} 
$K$ is anisotropic, edge $e$ violates {\bf A2} due to the lack of 
an extra vertex in the middle; meanwhile {\bf A3} is violated if the number of 
nonconvex wedges grows unbounded as $h\to 0$.
}
\label{fig:starelem}
\end{figure}
The next requirement for an element $K\in \mathcal{T}_h$, Assumption {\bf A3}, can 
be also treated as certain shape regularity of the mesh. It is needed for the 
approximation property, which is originally guaranteed by the star-shaped condition 
\hyperref[asp:C]{\bf C1}. As we do not enforce \hyperref[asp:C]{\bf C1}, we provide 
an alternative way of proving the approximation estimate of the projection in Lemma 
\ref{lemma:isopr}. For an element $K$, consider the convex hull $\mathrm{conv}(K)$ 
of $K$, and obviously $h_{K} = h_{\mathrm{conv}(K)}$. Define 
$n_{\mathrm{conv}(K)}$ to 
be 
the cardinality of the following set:
\begin{equation}\label{eq:omegaK}
n_{\mathrm{conv}(K)} := \bigl|\{K'\in \mathcal{T}_h: \; 
K \cap \mathrm{conv}(K') \neq \varnothing \} \bigr|.
\end{equation}
\begin{itemize}
\item[{\bf A3.}] \label{asp:A3} 
There exists a constant $C_3>0$ such that $n_{\mathrm{conv}(K)} \leq C_3$. 
\end{itemize}

\smallskip
$n_{\mathrm{conv}(K)}$ represents the number of times an 
element $K$ has a nonempty intersection with the convex hulls of $K$'s neighboring 
elements. $\mathcal{T}_h$ satisfies {\bf A3} if the number of polygons touching 
every vertex is uniformly bounded, which is true for the 
most popular polygonal mesh generators (see 
\cite{du1999centroidal,talischi2012polymesher}). 

In Section \ref{sec:iso}, we shall present the error analysis for isotropic 
elements, and for a special 
class of anisotropic polygons on which {\bf A1} holds but not {\bf A2} in Section 
\ref{sec:aniso}. {\bf A3} is required in both scenarios.

In~\cite{Beirao2017stability,Brenner;Sung:2018VEM}, short edges are allowed and 
integrated into the VEM error analysis by only assuming {\bf A1--C1}.
{\bf A2} is inspired by and modified from the uniform height condition $l_e\geq 
\gamma h_K$, which is posed as a shape regularity constraint in 
\cite{Wang2014wgmixed}. One can easily verify that {\bf C1} implies {\bf A2}. When the polygon is star-shaped and $\gamma$-shape regular for a uniform constant 
$\gamma$, for an edge $e$, the triangle $T_e$ can be chosen 
as the one formed by $e$ and the center of the disk. Although $T_e$ may not be shape 
regular, its height satisfies {\bf A2}. In our 
opinion, being shape regular is a local condition near an edge, while 
being star-shaped requires global information about the whole polygon $K$. In this 
sense {\bf A2} is weaker than {\bf C1}. For example, a polygon can be isotropic but 
not uniformly star-shaped; please refer to Figure \ref{fig:starelem}(a), (b), and, 
in the extreme, the pegasus polygon (winged horses) constructed from M.C Escher's 
tessellations in \cite{Brezzi2014VEM}. 

We now explore more on the geometric properties of isotropic elements. 
\begin{lemma}[scale of the polygon area for isotropic elements]
\label{lemma:iso-area}
For an isotropic element $K$, i.e., $K$ satisfies {\bf A1-A2}, we 
have the relation
\begin{equation}\label{eq:areaKh}
 n_{\mathcal E_K}^{-2}\gamma \, h_K^2/2 \leq |K| \leq \pi h_K^2.
\end{equation}
\end{lemma}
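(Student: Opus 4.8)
The plan is to prove the two inequalities in \eqref{eq:areaKh} separately, and each one is essentially elementary once the geometric quantities are unwound. First I would dispose of the upper bound $|K| \leq \pi h_K^2$: since every point of $K$ lies within distance $h_K = \mathrm{diam}(K)$ of any fixed point of $K$, the polygon $K$ is contained in a disk of radius $h_K$ (in fact one can do better, but this crude containment suffices), so $|K| \leq \operatorname{meas}_2(\text{disk of radius } h_K) = \pi h_K^2$. This requires no geometric assumptions at all.

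The lower bound is where the assumptions {\bf A1--A2} enter. The idea is to pick a long edge and use the triangle $T_e$ guaranteed by the definition of $l_e$ to manufacture area sitting inside $K$. Concretely, I would first observe that since $\partial K$ is a polygon with $n_{\mathcal E_K}$ edges whose total length is at least $h_K$ (the two endpoints realizing the diameter are joined by a polygonal path along $\partial K$), there must exist an edge $e^\ast \subset \partial K$ with $h_{e^\ast} \geq h_K / n_{\mathcal E_K}$. Then by {\bf A2} applied to that edge, the inward height satisfies $l_{e^\ast} \geq \gamma h_{e^\ast} \geq \gamma h_K / n_{\mathcal E_K}$, and by the definition \eqref{eq:local-le} there is a triangle $T(e^\ast, l_{e^\ast}) \subset K$. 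Hence
\begin{equation*}
|K| \geq |T(e^\ast, l_{e^\ast})| = \tfrac12 h_{e^\ast} l_{e^\ast} \geq \tfrac12 \cdot \frac{h_K}{n_{\mathcal E_K}} \cdot \frac{\gamma h_K}{n_{\mathcal E_K}} = \frac{\gamma h_K^2}{2 n_{\mathcal E_K}^2},
\end{equation*}
which is exactly the claimed lower bound $n_{\mathcal E_K}^{-2} \gamma h_K^2 / 2 \leq |K|$.

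The one genuine subtlety — the main obstacle, such as it is — is justifying the existence of an edge of length at least $h_K / n_{\mathcal E_K}$. This follows from the fact that for any two vertices $\bm a, \bm b \in \mathcal N(K)$ with $|\bm a - \bm b| = h_K$, one of the two arcs of $\partial K$ connecting them is a chain of at most $n_{\mathcal E_K}$ edges whose lengths sum to at least the straight-line distance $h_K$, so by pigeonhole at least one edge in that chain has length $\geq h_K / n_{\mathcal E_K}$; I would also note that if the diameter is realized by a vertex and a non-vertex boundary point, or by two non-vertex points, the same pigeonhole argument still applies to the full edge set since the total perimeter is $\geq 2 h_K \geq h_K$. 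One should double-check that the rescaling convention for {\bf A2} (that one may take $0 < \gamma \leq 1$) does not weaken the constant below what is stated; since the lemma's inequality only gets easier as $\gamma$ decreases and we are free to use the original, possibly larger, $\gamma$ here, there is no issue. Everything else is a one-line computation, so I would not belabor it in the write-up.
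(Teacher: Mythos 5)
Your proposal is correct and follows essentially the same route as the paper: the upper bound from containment in a disk of radius $h_K$, and the lower bound by locating an edge of length at least $h_K/n_{\mathcal E_K}$ (the paper simply takes the longest edge and uses $h_K \leq |\partial K| \leq n_{\mathcal E_K} h_e$), then applying {\bf A2} and $|K| \geq |T_e| = \tfrac12 h_e l_e$. The pigeonhole discussion of which boundary arc to use is more elaborate than necessary but harmless.
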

\begin{proof}
$|K| \leq \pi h_K^2$ is true by the definition of the diameter. To 
prove the lower bound, take the longest edge $e$. By {\bf A1}, $h_K \leq |\partial 
K| \leq n_{\mathcal{E}_K} h_e$. We then have $h_K^2 \leq  
n_{\mathcal{E}_K}^2h_e^2 \leq (n_{\mathcal{E}_K}^2/\gamma) l_e h_e \leq 
2(n_{\mathcal{E}_K}^2/\gamma) |T_e|\leq 2(n_{\mathcal{E}_K}^2/\gamma) |K|$.
\end{proof}

For anisotropic polygons, there might exist an edge $e$ such that $l_e \ll h_e$, 
i.e., this edge $e$ violates the height condition. The relation between the area and 
the edge length could be $l_eh_e/2 \leq |K| \ll h_e^2.$ 

It is not pragmatic to analyze anisotropic polygons of arbitrary shapes. 
Later in Section \ref{sec:aniso}, for the ease of presentation,  we will 
restrict ourself to the anisotropic polygons obtained by cutting a square with one 
straight line, which is a common practice for interface problems (e.g., 
\cite{Chen2017interface} and the references therein). Depending on the location of 
the cut, a triangle or quadrilateral may violate {\bf A2} as $h\to 0$. 

\begin{lemma}[property of polygons cut from a square]
\label{lemma:SKR}
Let $S$ be a square. Assume that $S = K\cup R$, $\bar K\cap \bar R$ is a line 
segment, and $|K|, |R| > 0$. Then both $K$ and $R$ satisfy {\bf A1} and either $K$ 
or $R$ is isotropic, i.e., satisfies {\bf A1--A2}.
\end{lemma}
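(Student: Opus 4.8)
The plan is to normalise the square and then dispatch the claim by a short case analysis on the cut. Since both $n_{\mathcal{E}_K}$ and all the ratios $l_e/h_e$ are invariant under similarity transformations, I would first assume $S=[0,1]^2$. Then {\bf A1} is immediate: the common edge $\bar K\cap\bar R$ is a chord of $S$ whose endpoints lie on $\partial S$ (otherwise $\bar K\cap\bar R$ could not be a single segment), and the degenerate case in which the chord lies along a side is excluded by $|K|,|R|>0$; a chord meets $\partial S$ in two points and hence severs at most two of the four sides of $S$, so each of $K$ and $R$ is a convex polygon whose edges are the chord together with at most four (whole or truncated) sides of $S$. Consequently $n_{\mathcal{E}_K},n_{\mathcal{E}_R}\le 5$, and one may take $C_1=5$.

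For the lower bounds on $l_e$ I would use the following elementary device. Let $C$ be a convex polygon, $f$ one of its edges with endpoints $\bm a,\bm b$ and midpoint $\bm m$, and $\bm n_f$ the inward unit normal of $f$; let $\bm q$ be the point at which the ray $\bm m+t\bm n_f$, $t>0$, first leaves $C$. Then the triangle with vertices $\bm a,\bm b,\bm q$ lies in $C$ by convexity, its two base angles are $\le\pi/2$ because the apex $\bm q$ projects orthogonally onto $\bm m\in f$, and it is contained in the rectangle $f\times(0,\delta_f]$; hence $l_f\ge|\bm q-\bm m|$. In the same spirit, $l_f\ge\operatorname{dist}(\bm v,\operatorname{line}(f))$ for any vertex $\bm v$ of $C$ whose orthogonal projection onto $\operatorname{line}(f)$ falls on the closed segment $f$. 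With these two observations, verifying {\bf A2} for a given edge reduces to a one-dimensional ``how far can one walk inward'' estimate.

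Next I would split into cases according to where the chord meets $\partial S$, using the symmetries of $S$ to reduce the work. (I) \emph{Corner cut} (the chord joins the interiors of two adjacent sides): $K$ is the right triangle with legs $a\ge c$ along those two sides and $R$ the complementary pentagon. When $c$ is comparable to $a$ (say $c\ge a/2$) I expect $K$ itself to be isotropic — its only long edge is the hypotenuse, for which the opposite right-angle vertex gives $l/h=ac/(a^{2}+c^{2})$, while over the midpoint of a leg the inward perpendicular reaches half of the other leg. When $c\ll a$, $R$ is isotropic instead: it is almost all of $S$, each of its four non-chord edges lies on $\partial S$ and admits an inward walk of length of order one, and the perpendicular from the midpoint of the chord leaves $R$ through the top side at a distance bounded below independently of $a,c$. (II) \emph{Transversal cut} (the chord $e^{\ast}$ joins the interiors of two opposite sides, say at $(b,0)$ on the bottom and $(t,1)$ on the top): $K$ and $R$ are two trapezoids meeting along $e^{\ast}$. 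A short arithmetic check (using $\max(b,t)\ge\tfrac12$ or $\min(b,t)\le\tfrac12$) shows that at least one of them contains a corner of $S$ at distance $\ge\tfrac14 h_{e^{\ast}}$ from $\operatorname{line}(e^{\ast})$; the vertex-apex bound then gives $l_{e^{\ast}}\ge\tfrac14 h_{e^{\ast}}$ for that trapezoid, and its three remaining edges — two horizontal, one vertical — are handled by the inward walk. (III)--(IV) \emph{Vertex and diagonal cuts}: a chord from a vertex of $S$ to a non-adjacent side is a degenerate instance of (I) (one leg equals $1$, and the complementary piece is a quadrilateral treated like the pentagon above), and a chord joining two opposite vertices is a diagonal, splitting $S$ into two congruent isosceles right triangles, each isotropic by the $c=a$ instance of (I). In every case a single universal constant — $\gamma=1/4$ will comfortably do — comes out, which is what {\bf A2} requires of the mesh.

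The main obstacle I anticipate is not any single estimate but the bookkeeping in case (II). One must resist picking ``the larger piece'' — the isotropic one need not be the larger — and instead read off the correct trapezoid from the position of the chord; and, most delicately, one must verify {\bf A2} for the possibly very short non-chord edges of that trapezoid, which is exactly where one uses both the convexity of the piece and the full unit of inward room that the ambient square supplies away from the chord. A secondary point to check carefully, in each vertex-apex estimate, is that the far corner of $S$ one chooses really does project orthogonally onto the chord segment, and not beyond one of its endpoints; this is where the specific geometry of $[0,1]^{2}$ enters.
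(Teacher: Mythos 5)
Your proposal is correct and, at its core, follows the same strategy as the paper: normalize the square, observe that $n_{\mathcal E}\le 5$, split into cases according to where the chord meets $\partial S$, and verify {\bf A2} edge by edge by exhibiting an explicit inscribed triangle whose apex is either a far vertex or the exit point of the inward normal ray. The genuine difference is the \emph{selection rule}: the paper simply declares the piece of larger area to be the isotropic one and, for each of its edges, takes the farthest vertex as the apex of $T_e$, asserting that the base angles stay below $\pi/2$; you instead read off the isotropic piece from the position of the cut (the threshold $c\ge a/2$ in the corner case, $\max(b,t)\ge\tfrac12$ versus $\min(b,t)\le\tfrac12$ in the transversal case) and justify the angle condition via the midpoint-normal-ray and vertex-projection devices. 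Your route is more laborious but also more honest about the one point the paper glosses over, namely that the chosen apex must project orthogonally onto the (possibly truncated) edge. One remark of yours is, however, unfounded: for a square cut by a single line the larger piece \emph{is} always isotropic --- a truncated side of the larger piece always retains a corner of $S$ as an endpoint, and the adjacent corner of $S$ at distance $1$ projects onto that endpoint, while for the cut edge the far corner of $S$ gives a ratio bounded below by an absolute constant --- so your warning to ``resist picking the larger piece'' does not point to an actual failure mode; it is harmless, since your case-based selection is also valid, but it should not be presented as a necessity. (Incidentally, the paper's stated constants are not sharp either: a vertical midline cut gives a largest vertex distance of exactly $1/2$ to the cut edge, not $2/\sqrt5$; like your $\gamma=1/4$, any universal positive constant suffices for the lemma.)
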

\begin{proof}Obviously {\bf A1} holds with the upper bound $C=5$. 

Without loss of generality, it is assumed that $|S|= 1$ and $R$ is the polygon with 
larger area. There are only two cases. 

$\bullet$ Case 1: two cut points are on two neighboring sides of $S$ and $R$ is a 
pentagon. 

$\bullet$ Case 2: two cut points are on the opposite sides of $S$ and $R$ is a 
trapezoid and could be degenerate to a triangle when the cut forms 
the diagonal of $S$. 

For any edge $e$ of $R$, one can choose the farthest vertex, which has the largest 
distance to this edge to form the triangle $T_e$, and the height $l_e$ is the 
distance. The maximum angle of $T_e$ is bounded by $\pi/2$. 
For {\bf A2}: (1) When the edge $e$ is also on the boundary of the 
square, the height $l_e$ is the distance of the vertex to the 
edge $\geq 1/2$ as $R$ is assumed to have the larger area. (2) For the 
cut edge, the largest distance is $\geq \sqrt{2}/2$ in the pentagon case, and $\geq 
2/\sqrt{5}$ for the trapezoid.
\end{proof}

\section{Error analysis for isotropic elements}
\label{sec:iso}
In this section we shall provide optimal order error estimate for linear VEM 
approximation on isotropic elements, i.e., polygons satisfying Assumption 
\hyperref[asp:A]{\bf A1--A2--A3}. 

We first present an interpolation error estimate. Thanks to the error bound 
\eqref{eq:err-apriori}, we do not need to estimate $\norm{\nabla(u - u_I)}$ but its 
projection $\norm{\nabla \Pi(u - u_I)}$, which can be transferred to the element 
boundary through integration by parts. The interpolation error $u-u_I$ is well 
understood along the boundary. \hyperref[asp:A]{\bf A2} gives room to trace 
inequalities to lift the estimate to the interior. 

\begin{lemma}[a projection-type error estimate for the interpolation on isotropic 
polygons]\label{lemma:iso-interp}
If $K$ satisfies \hyperref[asp:A]{\bf A1--A2} with $0<\gamma\leq 1$, then 
for any $u\in H^2(K)$,
\begin{equation}
\label{est:isointerp}
\norm{\nabla \Pi  (u - u_I)}_{0,K} \lesssim 
n_{\mathcal{E}_K} \gamma^{-1} \, h_K |u|_{2,K}.
\end{equation}
\end{lemma}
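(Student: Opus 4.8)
The plan is to exploit the error bound philosophy already used in Corollary \ref{corollary:errbound}: since $\Pi (u-u_I)$ is a linear polynomial on $K$, its gradient is the constant $\overline{\nabla(u-u_I)}^K$, and by the divergence theorem this constant can be written as a boundary integral. First I would write
\begin{equation}
\label{eq:pf-iso-interp-1}
\norm{\nabla \Pi (u-u_I)}_{0,K}^2 = |K|\,\bigl|\overline{\nabla(u-u_I)}^K\bigr|^2
= |K|^{-1}\Bigl|\int_{\partial K} (u-u_I)\,\bs n \dd s\Bigr|^2,
\end{equation}
using $\int_K \nabla w = \int_{\partial K} w\,\bs n$. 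Then I would split the boundary integral edge by edge and, on each edge $e$, subtract a constant (e.g.\ the mean of $u-u_I$ over $e$, or simply use that $u-u_I$ vanishes at the two endpoints of $e$) to exploit the extra cancellation. Because $u_I$ interpolates $u$ at the vertices, $(u-u_I)$ vanishes at both endpoints $\bs a_e,\bs b_e$ of each edge $e$, so a one-dimensional Poincar\'e-type inequality on $e$ gives $\norm{u-u_I}_{0,e}\lesssim h_e \norm{\partial_e(u-u_I)}_{0,e}$, and a further interpolation estimate on $e$ controls $\norm{\partial_e(u-u_I)}_{0,e}$ by $h_e^{1/2}$ times an $H^2$-type quantity of $u$ restricted to a neighborhood of $e$.

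The key technical step, and where Assumption \hyperref[asp:A]{\bf A2} enters, is the passage from edge data back to a volume seminorm of $u$ on $K$. I would use the triangle $T_e = T(e,l_e)\subset K$ guaranteed by {\bf A2}: on $T_e$, whose two base angles at $e$ are $\le \pi/2$ and whose height satisfies $l_e \ge \gamma h_e$, a scaled trace inequality (of the form revisited in the appendices) gives, for $w\in H^2(T_e)$,
\begin{equation}
\label{eq:pf-iso-interp-2}
\norm{\nabla w}_{0,e}^2 \lesssim h_e^{-1}\norm{\nabla w}_{0,T_e}^2
+ h_e\,|w|_{2,T_e}^2,
\end{equation}
with a constant depending on $l_e/h_e$, hence on $\gamma^{-1}$. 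Applying this with $w=u-\Pi_e^1 u$ for a suitable linear polynomial $\Pi_e^1 u$ that makes $\nabla w$ small, combined with Bramble--Hilbert on $T_e$ and the inclusion $T_e\subset K$, yields $\norm{\partial_e(u-u_I)}_{0,e}^2 \lesssim \gamma^{-1} h_e\,|u|_{2,K}^2$ after using the 1-D interpolation estimate on $e$ together with the trace estimate to bound the tangential derivative of $u-u_I$. Putting these together in \eqref{eq:pf-iso-interp-1}, using $\int_e (u-u_I)\,\bs n \dd s$ on each edge and Cauchy--Schwarz over the $n_{\mathcal E_K}$ edges,
\begin{equation}
\label{eq:pf-iso-interp-3}
\norm{\nabla\Pi(u-u_I)}_{0,K}^2 \lesssim |K|^{-1}\, n_{\mathcal E_K}
\sum_{e\subset\partial K} h_e\,\norm{u-u_I}_{0,e}^2
\lesssim |K|^{-1}\, n_{\mathcal E_K}\,\gamma^{-1}\sum_{e\subset\partial K} h_e^4\,|u|_{2,K}^2.
\end{equation}

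Finally I would convert the geometric factor $|K|^{-1}\sum_e h_e^4$ into $n_{\mathcal E_K}\gamma^{-1} h_K^2$ using the area estimate of Lemma \ref{lemma:iso-area}: since $|K|\ge n_{\mathcal E_K}^{-2}\gamma h_K^2/2$ and each $h_e\le h_K$ with at most $n_{\mathcal E_K}$ edges, $|K|^{-1}\sum_e h_e^4 \lesssim n_{\mathcal E_K}^2\gamma^{-1} h_K^2 \cdot n_{\mathcal E_K} h_K^2 / h_K^2$, which after taking square roots and tracking powers delivers the claimed bound $n_{\mathcal E_K}\gamma^{-1} h_K |u|_{2,K}$. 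The main obstacle I anticipate is getting the powers of $n_{\mathcal E_K}$ and $\gamma$ to come out exactly as stated rather than with a worse exponent: one must be careful to subtract edgewise means (not a global constant) before applying Cauchy--Schwarz, to use that $(u-u_I)$ vanishes at vertices so that no extra factor of $n_{\mathcal E_K}$ is lost in a Poincar\'e-on-$\partial K$ step, and to make sure the trace inequality on $T_e$ is the sharp scaled version whose constant is linear (not worse) in $h_e/l_e\le\gamma^{-1}$. Bounding $|K|$ from below via Lemma \ref{lemma:iso-area} is what prevents a short edge from degrading the estimate, so the bookkeeping there is the delicate part.
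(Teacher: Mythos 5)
Your proposal is correct and takes essentially the same route as the paper's proof: there, one writes $\norm{\nabla \Pi(u-u_I)}_{0,K}^2=(\nabla(u-u_I),\nabla q)_K=\sum_{e\subset\partial K}(u-u_I,\nabla q\cdot\bs n)_e$ with $q=\Pi(u-u_I)$ (the same identity as your divergence-theorem formula, since $\nabla q=\overline{\nabla(u-u_I)}^K$), bounds $\norm{u-u_I}_{0,e}\lesssim h_e^{3/2}|u|_{\frac32,e}$ by a one-dimensional interpolation estimate, and lifts to $K$ through the trace inequality on the triangle $T_e$ provided by \textbf{A2} — exactly the architecture you describe, up to your using the weighted $L^2$ trace inequality on $\nabla u$ in place of the $H^{1/2}$ one. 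The only point to fix is your last bookkeeping step: invoking the global area bound of Lemma \ref{lemma:iso-area} to control $|K|^{-1}\sum_e h_e^4$ yields $n_{\mathcal E_K}^2\gamma^{-1}h_K|u|_{2,K}$, one power of $n_{\mathcal E_K}$ worse than stated; the paper instead absorbs the geometry edgewise via $h_e^2\leq \gamma^{-1}l_eh_e\leq 2\gamma^{-1}|T_e|\leq 2\gamma^{-1}|K|$, i.e. $h_e/|K|^{1/2}\lesssim\gamma^{-1/2}$ on each edge separately, which recovers the single factor $n_{\mathcal E_K}$ after summing.
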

\begin{proof}
Let $q = \Pi  (u - u_I) \in \mathbb{P}_1(K)$, then by the definition 
of $\Pi $ in~\eqref{eq:pr}:
\begin{equation}
\left(\nabla \Pi  (u - u_I), \nabla q \right)_K 
= \left(\nabla (u - u_I), \nabla q \right)_K 
= \sum_{e\subset \partial K} (u - u_I, \nabla q \cdot \bs{n})_e.
\end{equation}
Being restricted to one edge, $(u - u_I)|_e$ can be estimated by the standard 
interpolation error estimate in fractional Sobolev norm (see e.g. 
\cite[Section 8, example 3]{Dupont-Scott1980} and \cite{Ern2017finite}):
$\norm{u - u_I}_{0,e}\lesssim h_e^{3/2} |u|_{\frac32,e}$. For the edge $e$ 
satisfying the \hyperref[asp:A]{\bf A1--A2}, use trace inequality 
\eqref{eq:tr-1/2} in Lemma \ref{lemma:tr-a} on $\nabla u$ component-wisely for the 
term $|u|_{\frac32,e}$,
\begin{equation} 
\label{eq:intp-1}
\begin{aligned}
 &(u - u_I, \nabla q \cdot \bs{n})_e 
\leq \norm{u - u_I}_{0,e} \norm{\nabla q\cdot \bs{n}}_{0,e}
\lesssim  \; h_e^{3/2} |u|_{\frac32,e}\, |\nabla q| \,h_e^{1/2}\\
\lesssim &\; \frac{1}{\sqrt{\gamma}} 
\frac{h_e}{|K|^{1/2}}\, h_e |u|_{2,K} \,|\nabla q| |K|^{1/2}\,
\lesssim 
\;  \frac{1}{\gamma} h_K |u|_{2,K} \norm{\nabla q}_{0,K},
\end{aligned}
\end{equation}
wherein the last step, we have used $h_e\leq h_K$ and $h_e^2/ |K| \lesssim 
\gamma^{-1}l_e h_e/|K|\leq \gamma^{-1}$ implied by \hyperref[asp:A]{\bf A2}. Summing 
up on each $e$ and canceling $\norm{\nabla q}_{0,K}$, we get~\eqref{est:isointerp}.
\end{proof}

In view of the proof, the error contribution is actually proportional to the edge 
length and thus a short edge is not an issue. \hyperref[asp:A]{\bf A2} is 
required to apply the trace inequalities, as well as the ratio $h_e/|K|^{1/2}$ 
being bounded. \hyperref[asp:A]{\bf A1} is needed in that the error 
estimate in~\eqref{eq:intp-1} is summed over all edges. For anisotropic elements, 
a long edge 
may only support a very short height moving inward to the interior of $K$ and thus 
$|K|\ll h_e^2$, needing a more delicate analysis detailed in section 
\ref{sec:aniso}. We now move to the estimate of the projection error $\nabla(u - 
\Pi  u)$.  

\begin{lemma}[error estimate for the projection]\label{lemma:isopr}
Let $\mathrm{conv}(K)$ be the convex hull of $K$, then for any $u\in H^2({\rm 
conv}(K))$, the 
following error estimate holds:
\begin{equation}\label{eq:piH1}
\norm{\nabla \bigl(u - \Pi u \bigr)}_{0,K}\leq 
\frac{h_K}{\pi}|u|_{2,\mathrm{conv}(K)}.
\end{equation}
\end{lemma}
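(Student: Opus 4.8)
The plan is to exploit the defining property of the projection $\Pi=\Pi_K^\nabla$, namely that $\nabla\Pi u=\overline{\nabla u}^K$, together with a comparison against the same kind of projection on the convex hull. First I would observe that since $\Pi u\in\mathbb P_1(K)$ realizes the $L^2(K)$-orthogonal projection of $\nabla u$ onto constants, we have the Pythagorean/best-approximation identity
\begin{equation}
\norm{\nabla(u-\Pi u)}_{0,K}^2=\min_{\bs c\in\mathbb R^2}\norm{\nabla u-\bs c}_{0,K}^2\le\norm{\nabla u-\bs c}_{0,K}^2\qquad\text{for every constant }\bs c.
\end{equation}
Thus it suffices to choose one convenient constant vector $\bs c$ and bound $\norm{\nabla u-\bs c}_{0,K}$. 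The natural choice is $\bs c=\overline{\nabla u}^{\,\mathrm{conv}(K)}$, the mean of $\nabla u$ over the convex hull.

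Next I would enlarge the integration domain from $K$ to $\mathrm{conv}(K)$, which only increases the $L^2$ norm since $K\subset\mathrm{conv}(K)$:
\begin{equation}
\norm{\nabla u-\overline{\nabla u}^{\,\mathrm{conv}(K)}}_{0,K}\le\norm{\nabla u-\overline{\nabla u}^{\,\mathrm{conv}(K)}}_{0,\mathrm{conv}(K)}.
\end{equation}
Now the right-hand side is exactly a mean-zero Poincaré quantity on the \emph{convex} set $\mathrm{conv}(K)$, applied componentwise to the two scalar functions $\partial_1 u$ and $\partial_2 u$ in $H^1(\mathrm{conv}(K))$. Invoking the sharp Payne--Weinberger Poincaré inequality on convex domains — $\norm{w-\overline{w}^{\,D}}_{0,D}\le(\operatorname{diam}D/\pi)\,\norm{\nabla w}_{0,D}$ for convex $D$ — with $D=\mathrm{conv}(K)$ and using $\operatorname{diam}\mathrm{conv}(K)=h_K$, I get
\begin{equation}
\norm{\nabla u-\overline{\nabla u}^{\,\mathrm{conv}(K)}}_{0,\mathrm{conv}(K)}^2=\sum_{i=1}^2\norm{\partial_i u-\overline{\partial_i u}^{\,\mathrm{conv}(K)}}_{0,\mathrm{conv}(K)}^2\le\frac{h_K^2}{\pi^2}\sum_{i=1}^2\norm{\nabla\partial_i u}_{0,\mathrm{conv}(K)}^2=\frac{h_K^2}{\pi^2}\,|u|_{2,\mathrm{conv}(K)}^2.
\end{equation}
Chaining the three displayed inequalities and taking square roots yields $\norm{\nabla(u-\Pi u)}_{0,K}\le(h_K/\pi)\,|u|_{2,\mathrm{conv}(K)}$, which is exactly \eqref{eq:piH1}.

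The only genuine subtlety — and the step I would be most careful about — is making sure the Payne--Weinberger constant $1/\pi$ survives unchanged: the sharp constant requires the domain to be convex (hence the detour through $\mathrm{conv}(K)$ rather than working directly on the possibly nonconvex $K$, on which the optimal Poincaré constant could blow up with the aspect ratio or the hourglass geometry), and it requires applying the inequality to the gradient components rather than attempting a vector-valued version. The enlargement $K\subset\mathrm{conv}(K)$ loses nothing in the constant, and the best-approximation property of $\Pi u$ over $\mathbb P_1(K)$ means replacing the $K$-mean of $\nabla u$ by the $\mathrm{conv}(K)$-mean is free. I should also note the hypothesis $u\in H^2(\mathrm{conv}(K))$ is exactly what licenses applying the Poincaré inequality to $\partial_i u\in H^1(\mathrm{conv}(K))$ on the larger set; this is why the lemma is stated with the convex-hull regularity rather than merely $u\in H^2(K)$.
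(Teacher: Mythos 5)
Your proposal is correct and follows essentially the same route as the paper: use that $\nabla\Pi u=\overline{\nabla u}^K$ is the best constant $L^2(K)$-approximation to replace it by $\overline{\nabla u}^{\,\mathrm{conv}(K)}$, enlarge the domain to the convex hull, and apply the Payne--Weinberger inequality there componentwise. The remarks on why convexity and the $H^2(\mathrm{conv}(K))$ hypothesis are needed match the paper's own discussion.
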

\begin{proof}
As $\nabla \Pi  u = \overline{\nabla u}^K$ is the best constant approximation in 
$L^2(K)$-norm:
\begin{equation*}
\norm{\nabla \bigl(u - \Pi  u \bigr)}_{0,K}
\leq \big\| \nabla u - \overline{\nabla u}^{\mathrm{conv}(K)}\big\|_{0,K}
\leq \big\| \nabla u - \overline{\nabla 
u}^{\mathrm{conv}(K)}\big\|_{0,\mathrm{conv}(K)},
\end{equation*}
then the Poincar\'e inequality in Lemma 
\ref{lemma:poincare-convex} on the convex set $\mathrm{conv}(K)$, together with the 
fact that $h_K = h_{\mathrm{conv}(K)}$, implies~\eqref{eq:piH1}.
\end{proof}

The approximation result~\eqref{eq:piH1} is usually established using the average 
over the contained disk for a star-shaped element and thus depends on the 
so-called chunkiness parameter in \hyperref[asp:C]{\bf C1} (the diameter over the 
largest radius of the disk with respect to which the domain is star-shaped). Here 
we use the convexity of 
$\mathrm{conv}(K)$ as it does not require any shape regularity of the element $K$.

\begin{lemma}[error estimate for the normal derivative of projection on isotropic 
polygons]
\label{lemma:isopr-nd}
If $K$ satisfies \hyperref[asp:A]{\bf A1--A2} with $0<\gamma\leq 1$, then for any 
$u\in H^2(\mathrm{conv}(K))$, on each $e\subset \partial K$:
\begin{equation}
h_e^{1/2}\norm{\nabla \bigl(u - \Pi  u \bigr)\cdot \bm{n}}_{0,e}
\lesssim \gamma^{-1/2} h_K |u|_{2,\mathrm{conv}(K)}.
\end{equation}
\end{lemma}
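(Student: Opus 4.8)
The plan is to estimate $\norm{\nabla(u-\Pi u)\cdot\bm n}_{0,e}$ by bringing the full gradient $\nabla(u-\Pi u)$ down to the edge $e$ with a trace inequality, and then controlling that trace by the interior $H^1$-norm of $\nabla(u-\Pi u)$ together with its $H^1(\mathrm{conv}(K))$-seminorm. Concretely, I would first write $\norm{\nabla(u-\Pi u)\cdot\bm n}_{0,e}\le\norm{\nabla(u-\Pi u)}_{0,e}$ componentwise, and then invoke the trace inequality from Appendix A on the triangle $T_e=T(e,l_e)\subset K$ guaranteed by \hyperref[asp:A]{\bf A2}. The natural form is a scaled trace inequality of the type $\norm{w}_{0,e}^2\lesssim h_e^{-1}\norm{w}_{0,T_e}^2 + h_e\,|w|_{1,T_e}^2$ or, after using $l_e\eqsim h_e$ (legitimate by the rescaling remark after {\bf A2}, and by \hyperref[asp:A]{\bf A2} we may take $l_e\le h_e$ with $l_e\ge\gamma h_e$), the shape-regular-triangle version $h_e\norm{w}_{0,e}^2\lesssim \gamma^{-1}\bigl(\norm{w}_{0,T_e}^2 + h_e^2|w|_{1,T_e}^2\bigr)$. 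Applying this with $w=\partial_{x_i}(u-\Pi u)$ and summing over $i$ gives
\begin{equation*}
h_e\norm{\nabla(u-\Pi u)\cdot\bm n}_{0,e}^2\lesssim \gamma^{-1}\Bigl(\norm{\nabla(u-\Pi u)}_{0,K}^2 + h_K^2\,|\nabla(u-\Pi u)|_{1,K}^2\Bigr),
\end{equation*}
where I have enlarged $T_e$ to $K$ and used $h_e\le h_K$.

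Next I would dispatch the two terms on the right. The first is exactly $\norm{\nabla(u-\Pi u)}_{0,K}$, already bounded by $\tfrac{h_K}{\pi}|u|_{2,\mathrm{conv}(K)}$ in Lemma~\ref{lemma:isopr}. For the second, since $\Pi u\in\mathbb P_1(K)$ has zero second derivatives, $|\nabla(u-\Pi u)|_{1,K}=|\nabla u|_{1,K}=|u|_{2,K}\le|u|_{2,\mathrm{conv}(K)}$; thus $h_K^2|\nabla(u-\Pi u)|_{1,K}^2\le h_K^2\,h_K^2/\pi^2\cdot$\,(something)\,$\ldots$ — more carefully, $h_K^2|u|_{2,K}^2$ combines with the first term to give, after taking square roots, $h_e^{1/2}\norm{\nabla(u-\Pi u)\cdot\bm n}_{0,e}\lesssim\gamma^{-1/2}h_K|u|_{2,\mathrm{conv}(K)}$, which is the claim. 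The constant-tracking here matters only up to the $\gamma^{-1/2}$ and an absolute constant, consistent with the $\lesssim$ convention.

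The main obstacle is choosing and justifying the right trace inequality: on an anisotropic-looking triangle $T_e$ the naive trace constant degrades, so I must apply the estimate on the \emph{rescaled} $T_e$ with $l_e\eqsim h_e$ (so $T_e$ is shape regular up to the factor governed by $\gamma$), and carefully track how the $\gamma^{-1}$ enters — it comes from the ratio $h_e/l_e$ in the normalization of the trace inequality, exactly as in the proof of Lemma~\ref{lemma:iso-interp} where $h_e^2/|K|\lesssim\gamma^{-1}$ was used. A secondary subtlety is that the trace is taken on $e$ but the $H^2$-regularity is only available on $\mathrm{conv}(K)$, not on $K$ alone; this is harmless because $T_e\subset K\subset\mathrm{conv}(K)$, so every interior norm appearing can be majorized by its value on $\mathrm{conv}(K)$, and Lemma~\ref{lemma:isopr} is stated precisely in those terms. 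I would present the trace step as a citation to the Appendix~A lemma (anticipated as Lemma~\ref{lemma:tr-a} and its scaled variants), then the two-line combination above.
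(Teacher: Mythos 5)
Your proposal is correct and follows essentially the same route as the paper: apply the weighted trace inequality \eqref{eq:tr-h} of Appendix~A componentwise to $\nabla(u-\Pi u)$ (with the $\gamma^{-1/2}$ coming from the height condition \hyperref[asp:A]{\bf A2}), use that $\Pi u\in\mathbb P_1(K)$ so the second-derivative term is $|u|_{2,K}$, and finish with the projection estimate \eqref{eq:piH1} on $\mathrm{conv}(K)$. The only cosmetic difference is that you spell out the rescaling of $T_e$ and the $h_e\le h_K$ enlargement, which the paper leaves implicit.
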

\begin{proof}
With \hyperref[asp:A]{\bf A1--A2}, we can apply the weighted trace 
inequality~\eqref{eq:tr-h} in Appendix \hyperref[appendix]{A}, and estimate 
\eqref{eq:piH1} to get
\begin{equation}
\label{eq:u-Piun}
h_e^{1/2}\norm{\nabla \bigl(u - \Pi  u \bigr)\cdot \bm{n}}_{0,e}\lesssim 
{\gamma^{-\frac12}} 
\left(\norm{\nabla \bigl(u - \Pi  u \bigr)}_{0,K} + 
h_e|u|_{2,K} \right)\lesssim 
\gamma^{-\frac12} h_K|u|_{2,\mathrm{conv}(K)}.
\end{equation}
\phantom{!}
\end{proof}

Lastly, the broken $1/2$-seminorm term in~\eqref{eq:err-apriori} on each edge which 
again can be easily estimated using a trace inequality as in the traditional 
finite element analysis.

\begin{lemma}[error estimate for the stabilization on isotropic 
polygons]\label{lemma:iso-stab}
If $K$ satisfies \hyperref[asp:A]{\bf A1--A2} with $0<\gamma\leq 1$, then for any 
$u\in H^2(\mathrm{conv}(K))$, 
on each $e\subset \partial K$
\begin{equation}
\label{est:isostab}
|u_I - \Pi  u_I|_{\frac{1}{2}, e} 
\lesssim n_{\mathcal{E}_K} \gamma^{-3/2}\, h_K |u|_{2,\mathrm{conv}(K)}.
\end{equation}
\end{lemma}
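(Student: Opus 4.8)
The plan is to reduce the edge seminorm of $w:=u_I-\Pi u_I$ to a nodal difference and then invoke the interpolation and projection estimates already in hand. Since $u_I\in V_K$ restricts to an affine function on each edge and $\Pi u_I\in\mathbb{P}_1(K)$, the function $w$ is affine on each $e\subset\partial K$, and the edgewise $1/2$-seminorm of an affine function equals the modulus of its nodal difference (this is the computation behind \eqref{eq:difference-half}), so $|w|_{\frac12,e}=|w(\bm{b}_e)-w(\bm{a}_e)|$. Because $u_I$ matches $u$ at the vertices, $w(\bm{b}_e)-w(\bm{a}_e)=\bigl(u(\bm{b}_e)-u(\bm{a}_e)\bigr)-\bigl(\Pi u_I(\bm{b}_e)-\Pi u_I(\bm{a}_e)\bigr)$. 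I would then insert the linear projection $\Pi u$ of $u$ (with $\nabla\Pi u=\overline{\nabla u}^{K}$, as in Lemma~\ref{lemma:isopr}) and use linearity of $\Pi$ to split
\[
w(\bm{b}_e)-w(\bm{a}_e)=\underbrace{\bigl[(u-\Pi u)(\bm{b}_e)-(u-\Pi u)(\bm{a}_e)\bigr]}_{(\mathrm{I})}+\underbrace{\bigl[\Pi(u-u_I)(\bm{b}_e)-\Pi(u-u_I)(\bm{a}_e)\bigr]}_{(\mathrm{II})}.
\]

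For $(\mathrm{I})$ I would write it as $\int_{e}\partial_e(u-\Pi u)\,\dd s$, bound it by $h_e^{1/2}\norm{\nabla(u-\Pi u)\cdot\bm{t}_e}_{0,e}$, and then apply the weighted trace inequality \eqref{eq:tr-h} together with Lemma~\ref{lemma:isopr}, exactly as in the proof of Lemma~\ref{lemma:isopr-nd} but with the tangent $\bm{t}_e$ in place of the normal $\bm{n}$; this gives $|(\mathrm{I})|\lesssim\gamma^{-1/2}h_K|u|_{2,\mathrm{conv}(K)}$. For $(\mathrm{II})$, since $\Pi(u-u_I)$ is affine, $|(\mathrm{II})|=|\nabla\Pi(u-u_I)\cdot(\bm{b}_e-\bm{a}_e)|\le h_e|\nabla\Pi(u-u_I)|=h_e|K|^{-1/2}\norm{\nabla\Pi(u-u_I)}_{0,K}$; Lemma~\ref{lemma:iso-interp} bounds $\norm{\nabla\Pi(u-u_I)}_{0,K}\lesssim n_{\mathcal{E}_K}\gamma^{-1}h_K|u|_{2,K}$, and the ratio $h_e/|K|^{1/2}\lesssim\gamma^{-1/2}$ follows from {\bf A2} exactly as in \eqref{eq:intp-1}, so $|(\mathrm{II})|\lesssim n_{\mathcal{E}_K}\gamma^{-3/2}h_K|u|_{2,\mathrm{conv}(K)}$. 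Adding the two contributions, and using $n_{\mathcal{E}_K}\ge 1$ and $\gamma\le 1$ so that the $(\mathrm{II})$ term dominates, I would arrive at \eqref{est:isostab}.

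I do not expect a serious obstacle: the estimate is essentially an assembly of Lemmas~\ref{lemma:iso-interp}, \ref{lemma:isopr} and \ref{lemma:isopr-nd}. The two places that call for a little care are (i) recognizing that for an edgewise-affine function the broken $1/2$-seminorm on $e$ is exactly the absolute nodal difference, which is what makes the split above legitimate, and (ii) bookkeeping the area-to-edge-length ratios so that the powers of $\gamma$ accumulate to the advertised $\gamma^{-3/2}$ — the $\gamma^{-1}$ from the interpolation estimate times the $\gamma^{-1/2}$ coming from $h_e/|K|^{1/2}$ via {\bf A2}.
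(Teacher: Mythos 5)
Your proof is correct and follows essentially the same route as the paper: the same splitting by inserting $\Pi u$, the same fundamental-theorem-of-calculus plus weighted-trace treatment of the $u-\Pi u$ piece, and the same reduction of the remaining piece to $\norm{\nabla\Pi(u-u_I)}_{0,K}$ via Lemma~\ref{lemma:iso-interp}. The only cosmetic difference is that for the linear function $\Pi(u-u_I)$ you evaluate the nodal difference directly and use $h_e/|K|^{1/2}\lesssim\gamma^{-1/2}$, whereas the paper quotes the trace inequality~\eqref{eq:tr-1/2}; for a constant gradient these yield the identical bound.
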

\begin{proof}
Split $u_I - \Pi  u_I = (u_I - \Pi  u) + (\Pi  u - \Pi  u_I)$. For the second 
term, one can use the trace inequality~\eqref{eq:tr-1/2} and the interpolation error 
estimate~\eqref{est:isointerp}
\begin{equation}
\left| \Pi  u - \Pi  u_I\right|_{\frac{1}{2}, e}\lesssim
\gamma^{-\frac12} \norm{\nabla (\Pi  u - \Pi  u_I)}_{0,K}\lesssim n_{\mathcal{E}_K} 
\gamma^{-\frac32} \, h_K|u|_{2,K}.
\end{equation}
For the first term $\left| u_I - \Pi u\right|_{\frac{1}{2}, e}$, since $u_I$ and $u$ 
match at the vertices $\bm{a}_e$ and $\bm{b}_e$ of $e$, one has  
\begin{equation}
\label{eq:uIpiue}
(u_I - \Pi  u)\Big|_{\bm{a}_e}^{\bm{b}_e}=(u - \Pi  u)\Big|_{\bm{a}_e}^{\bm{b}_e}
= \int_e \partial_e(u - \Pi  u) \dd s.
\end{equation}
Thus by the Cauchy--Schwarz inequality and because $u_I$ is linear on $e$: 
\begin{equation}
\left| u_I - \Pi u\right|_{\frac{1}{2}, e}
= \left| (u_I - \Pi  u)\big|_{\bm{a}_e}^{\bm{b}_e} 
\right|\leq h_e^{1/2}\norm{\partial_e (u - \Pi  u)}_{0,e}.
\end{equation}
Applying the trace inequality~\eqref{eq:tr-h} and the estimate 
\eqref{eq:piH1} then yields the lemma:
\begin{equation}
\label{eq:uIpiue-est}
h_e^{1/2}\norm{\partial_e (u - \Pi  u)}_{0,e} \lesssim 
\gamma^{-\frac12}\left(\norm{\nabla (u - \Pi  u)}_{0,K} + h_e|u|_{2, K}\right) 
\lesssim \gamma^{-\frac12} h_K|u|_{2,\mathrm{conv}(K)}.
\end{equation}
\end{proof}

We now summarize the convergence result on isotropic meshes as follows.

\begin{theorem}[a priori convergence result for VEM on isotropic 
meshes]\label{theorem:iso}
Assume that $\mathcal{T}_h$ satisfies \hyperref[asp:A]{\bf A1--A2--A3}, 
and that the weak solution $u$ to problem~\eqref{eq:weakform} satisfies the 
regularity result $u\in H^2(\Omega)$; then the following a priori error estimate 
holds for the solution $u_h$ to problem~\eqref{eq:VEMsol} with constant 
dependencies $C_{\gamma}:= 1/\min\{\gamma,1\} $ with $\gamma$ being the 
uniform lower bound for each edge in each $K\in 
\mathcal{T}_h$ satisfying \hyperref[asp:A]{\bf A2}, $C_{\mathcal{E}} := 
\max_{K\in \mathcal{T}_h} 
n_{\mathcal{E}_K}$, and $C_{\omega}:= \max_{K\in \mathcal{T}_h} n_{{\rm 
conv}(K)}$ 
with $n_{\mathrm{conv}(K)}$ in~\eqref{eq:omegaK}:
\begin{equation}\label{eq:isoest}
\tnorm{u-u_h} \lesssim C_{\gamma}^{3/2} C_{\mathcal{E}}^{3/2} 
C_{\omega}^{1/2}\, h \,|u|_{2,\Omega}.
\end{equation}
\end{theorem}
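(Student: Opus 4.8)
The plan is to reduce the estimate to the element-local bounds of Lemmas~\ref{lemma:iso-interp}--\ref{lemma:iso-stab} by combining the abstract error representation of Corollary~\ref{corollary:errbound} with the triangle inequality for the mesh-dependent seminorm. Since $a_h(\cdot,\cdot)$ is symmetric and positive semidefinite, $\tnorm{\cdot}=a_h^{1/2}(\cdot,\cdot)$ obeys the triangle inequality on $H^2(\Omega)+V_h$ (on which $\Pi$, the nodal interpolation, and the broken $1/2$-seminorms are all well defined, using the embedding $H^2(\Omega)\hookrightarrow C^0(\overline\Omega)$ in two dimensions), so I would begin with
\begin{equation*}
\tnorm{u-u_h}\;\le\;\tnorm{u-u_I}+\tnorm{u_h-u_I},
\end{equation*}
and show that each summand is $\lesssim C_\gamma^{3/2}C_{\mathcal E}^{3/2}C_\omega^{1/2}\,h\,|u|_{2,\Omega}$.

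For the consistency term $\tnorm{u_h-u_I}$ I would apply Corollary~\ref{corollary:errbound} and bound its three element contributions: $\norm{\nabla\Pi(u-u_I)}_{0,K}$ by Lemma~\ref{lemma:iso-interp}, the edgewise normal-flux term $h_e^{1/2}\norm{\nabla(u-\Pi u)\cdot\bs{n}}_{0,e}$ by Lemma~\ref{lemma:isopr-nd}, and the broken $1/2$-seminorm $|u_I-\Pi u_I|_{\frac12,\mathcal E_K}$ edge by edge by Lemma~\ref{lemma:iso-stab}, summing the at most $n_{\mathcal E_K}$ edge contributions in each case. Bounding $|u|_{2,K}\le|u|_{2,\mathrm{conv}(K)}$ and keeping track of the powers, the three squared contributions are $\lesssim n_{\mathcal E_K}^{2}\gamma^{-2}$, $\lesssim n_{\mathcal E_K}^{3}\gamma^{-1}$ and $\lesssim n_{\mathcal E_K}^{3}\gamma^{-3}$ times $h_K^2\,|u|_{2,\mathrm{conv}(K)}^2$; since $\gamma\le1$ and $n_{\mathcal E_K}\ge1$ the stabilization contribution dominates, so $\tnorm{u_h-u_I}^2\lesssim\sum_{K\in\mathcal T_h}n_{\mathcal E_K}^{3}\gamma^{-3}\,h_K^2\,|u|_{2,\mathrm{conv}(K)}^2$.

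For the interpolation term $\tnorm{u-u_I}$ I would expand it with the definition~\eqref{def:norm-t}. The gradient term is again Lemma~\ref{lemma:iso-interp}. For the broken $1/2$-seminorm of $(u-u_I)-\Pi(u-u_I)$ I would use, on each edge $e$, the triangle inequality $(u-u_I)-\Pi(u-u_I)=(u-u_I)+\big(-\Pi(u-u_I)\big)$ and estimate the two pieces separately: $|u-u_I|_{\frac12,e}$ is a genuinely fractional interpolation error on the segment $e$, controlled through the standard estimate $|u-u_I|_{\frac12,e}\lesssim h_e\,|u|_{\frac32,e}$ together with the same trace inequality applied to $\nabla u$ that is used in the proof of Lemma~\ref{lemma:iso-interp}; and $|\Pi(u-u_I)|_{\frac12,e}$, since $\Pi(u-u_I)$ is affine on $e$, equals $h_e\,|\partial_e\Pi(u-u_I)|\le h_e|K|^{-1/2}\norm{\nabla\Pi(u-u_I)}_{0,K}$, which after inserting $h_e^2\lesssim\gamma^{-1}|K|$ (a consequence of {\bf A2}; cf.\ the proof of Lemma~\ref{lemma:iso-area}) and Lemma~\ref{lemma:iso-interp} is of the same size as the dominant stabilization term of the previous step. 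Summing over edges, $\tnorm{u-u_I}^2$ obeys the same bound as $\tnorm{u_h-u_I}^2$.

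Finally I would sum over $K\in\mathcal T_h$, bound $n_{\mathcal E_K}\le C_{\mathcal E}$, $\gamma^{-1}=C_\gamma$ (taking $\gamma\le1$ as we may), $h_K\le h$, and convert $\sum_K|u|_{2,\mathrm{conv}(K)}^2$ into $C_\omega\,|u|_{2,\Omega}^2$ using Assumption~{\bf A3}: since each $K$ meets at most $n_{\mathrm{conv}(K)}\le C_\omega$ of the convex hulls $\mathrm{conv}(K')$, the family $\{\mathrm{conv}(K)\}_{K\in\mathcal T_h}$ has overlap at most $C_\omega$, hence $\sum_K|u|_{2,\mathrm{conv}(K)}^2\le C_\omega|u|_{2,\Omega}^2$. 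Collecting the two summands and taking square roots gives~\eqref{eq:isoest}. The main obstacle is the interpolation term: in contrast with $u_h-u_I$, where after applying $\operatorname{I}-\Pi$ one deals with piecewise harmonic functions whose boundary traces are piecewise linear, the function $(u-u_I)-\Pi(u-u_I)$ is neither, so its broken $1/2$-seminorm must be handled through the fractional single-edge estimate above, and one must verify that every constant entering along the way --- from the trace inequalities, from the ratios $h_e/|K|^{1/2}$, and from summing over the edges of $K$ --- collapses into exactly $C_\gamma^{3/2}C_{\mathcal E}^{3/2}C_\omega^{1/2}$. A subsidiary point is that Lemmas~\ref{lemma:isopr}--\ref{lemma:iso-stab} presuppose $u\in H^2(\mathrm{conv}(K))$, which holds for every $K$ when, as in the hypotheses, $u\in H^2(\Omega)$ and $\Omega$ is convex so that $\mathrm{conv}(K)\subset\overline\Omega$.
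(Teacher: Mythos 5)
Your outline is essentially the paper's proof: triangle inequality $\tnorm{u-u_h}\le\tnorm{u-u_I}+\tnorm{u_h-u_I}$, Corollary~\ref{corollary:errbound} combined with Lemmas~\ref{lemma:iso-interp}, \ref{lemma:isopr-nd}, \ref{lemma:iso-stab} for the first summand (with the same exponent bookkeeping, the $n_{\mathcal E_K}^3\gamma^{-3}$ stabilization term dominating), and the overlap count from {\bf A3} at the end. The only structural difference is in the broken $1/2$-seminorm of $(u-u_I)-\Pi(u-u_I)$: you split it as $(u-u_I)-\Pi(u-u_I)$ into $u-u_I$ and $\Pi(u-u_I)$, handling the first by the fractional edge interpolation estimate and the second by the exact formula $|q|_{\frac12,e}=h_e|\partial_e q|$ for affine $q$ together with $h_e^2\lesssim\gamma^{-1}|K|$; the paper instead regroups as $(u-\Pi u)-(u_I-\Pi u_I)$, so that one piece is already Lemma~\ref{lemma:iso-stab} and the other, $|u-\Pi u|_{\frac12,\mathcal E_K}$, is absorbed by the trace inequality~\eqref{eq:tr-1/2} and the projection estimate~\eqref{eq:piH1}. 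Both decompositions land on the same powers of $n_{\mathcal E_K}$ and $\gamma^{-1}$, so this is a cosmetic rather than substantive deviation.

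There is one genuine gap, in your closing ``subsidiary point.'' The lemmas you invoke require $u\in H^2(\mathrm{conv}(K))$, and you secure this by assuming $\Omega$ is convex so that $\mathrm{conv}(K)\subset\overline\Omega$. That is not a hypothesis of the theorem: $\Omega$ is only a bounded Lipschitz domain, and for a nonconvex element $K$ abutting a nonconvex portion of $\partial\Omega$ the hull $\mathrm{conv}(K)$ can protrude outside $\Omega$, where $u$ is undefined. The paper's proof opens precisely by removing this obstruction: apply Stein's extension theorem to obtain $Eu\in H^2(\mathbb R^2)$ with $Eu|_\Omega=u$ and $|Eu|_{2,\mathbb R^2}\le C(\Omega)|u|_{2,\Omega}$, run all the local estimates with $|Eu|_{2,\mathrm{conv}(K)}$, and only at the very end convert $\sum_K|Eu|^2_{2,\mathrm{conv}(K)}\lesssim C_\omega|Eu|^2_{2,\mathrm{conv}(\Omega)}\lesssim C_\omega|u|^2_{2,\Omega}$. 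Without this step your argument proves the theorem only for convex $\Omega$; with it, the rest of your proof goes through unchanged.
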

\begin{proof}
First of all, we apply the Stein's extension theorem (\cite[Theorem 6.5]{Stein1970}) 
to $u\in H^2(\Omega)\cap H^1_0(\Omega)$ to obtain $E u \in H^2(\mathbb R^2)$, 
$Eu |_{\Omega} = u|_{\Omega}$, and $| Eu |_{2, \mathbb R^2}\leq C(\Omega) 
| u |_{2,\Omega}$. With this extension $Eu\in H^2\big(\mathrm{conv}(K)\big)$ for any 
$K\in \mathcal T_h$. 

Using Corollary \ref{corollary:errbound}, Lemmas \ref{lemma:iso-interp}, 
\ref{lemma:isopr-nd}, \ref{lemma:iso-stab}, assuming $\mathcal{T}_h$ satisfies 
\hyperref[asp:A]{\bf A2} with $0<\gamma\leq 1$, and the fact that the integral on 
the overlap $\mathrm{conv}(K)\cap \mathrm{conv}(K')$ (when $\partial K \cap \partial 
K'\neq \varnothing$) is repeated $n_{\mathrm{conv}(K)}$ times by 
\hyperref[asp:A3]{\bf A3},  
we obtain 
\begin{equation}
\begin{aligned}
\tnorm{u_I-u_h}^2 
& \lesssim \sum_{K\in \mathcal T_h}
(n_{\mathcal{E}_K}^2 \gamma^{-2} + n_{\mathcal{E}_K}^3 \gamma^{-3} + 
n_{\mathcal{E}_K}^3 \gamma^{-1} )
h_K^2 |Eu|_{2,\mathrm{conv}(K)}^2 
\\
& \lesssim C_{\omega} C_{\mathcal{E}}^3\gamma^{-3} 
h^2|Eu|_{2,\mathrm{conv}(\Omega)}^2 
\lesssim C_{\omega} C_{\mathcal{E}}^3\gamma^{-3}  h^2|u|_{2,\Omega}^2.
\end{aligned}
\end{equation}
By the triangle inequality and Young's inequality, it suffices to estimate 
$\tnorm{u-u_I}$:
\begin{equation}
\tnorm{u-u_I}^2 \lesssim \sum_{K\in \mathcal T_h}\left(
\left\|\nabla \Pi  (u-u_I) \right\|_{0,K}^2 
+ \left| u_I - \Pi u_I \right|_{\frac12, \mathcal{E}_K}^2
+ \left| u - \Pi u \right|_{\frac12, \mathcal{E}_K}^2
\right).
\end{equation}
The first two terms above have been dealt with in Lemmas 
\ref{lemma:iso-interp} and \ref{lemma:iso-stab}. For the third term, by 
\hyperref[asp:A]{\bf A1--A2} and Lemma~\ref{lemma:tr-a}:
\begin{equation}
\label{eq:u-Piuhalf}
\left| u - \Pi u \right|_{\frac12, \mathcal{E}_K}^2 = 
\sum_{e\subset \partial K} \left| u - \Pi u \right|_{\frac{1}{2}, e}^2
\lesssim  n_{\mathcal{E}_K} \gamma^{-1}\norm{\nabla(u - \Pi u)}_{0,K}^2,
\end{equation} 
and the theorem follows from using the projection estimate~\eqref{eq:piH1} on 
$\norm{\nabla(u - \Pi u)}_{0,K}$.
\end{proof}

In light of the approximation error estimates~\eqref{eq:piH1} and~\eqref{eq:isoest}, 
the estimate
\begin{equation}
\left(\sum_{K\in \mathcal{T}_h}\norm{\nabla u - \nabla \Pi  
u_h}_{0,K}^2\right)^{1/2}\lesssim 
h|u|_{2,\Omega}
\end{equation} 
can then be obtained from the triangle inequality. 
Although $u_h$ is not explicitly known, the computable discontinuous piecewise 
linear polynomial $\Pi u_h$ is an optimal order approximation of $u$ in the discrete 
$H^1$-seminorm.

\begin{remark}[other choices of stabilization] 
\label{rmk:stabilizations}
\rm
We remark here that the analyses used in this section apply to other types of 
stabilizations analyzed in 
\cite{Brezzi2013basic,Beirao2017stability,Brenner;Sung:2018VEM,Wang2014wgmixed,Wriggers2016virtual}:
\begin{itemize}
\vspace{3pt}
\item $L^2$-type $S_K(u,v) = \sum_{e\subset \partial K}h_e^{-1}(u, v)_e$,
\vspace{3pt}
\item d.o.f.-type $S_K(u,v) = \sum_{\bm{a}\in 
\mathcal{N}_K}u(\bm{a})v(\bm{a})$,
\vspace{3pt}
\item tangential derivative-type $S_K(u,v) = \gamma_K \sum_{e\subset \partial 
K}(\partial_e u, \partial_e v)_e$.
\end{itemize}
\smallskip

In~\cite{Wang2014wgmixed}, no short edge 
condition is imposed, because an $L^2$-weighted trace inequality is 
used for the $L^2$-type stabilization with an $h_e^{-1}$-weight. In our approach, the 
Poincar\'e-type inequality~\eqref{eq:stab-shortedge} allows short 
edges in an element without further modifying the $h_e^{-1}$ weight. 
Replacing weight $h_e^{-1}$ with $h_K^{-1}$ in these stabilization terms is also 
allowed if an irregular polygon can be embedded into another shape regular one (see 
Section \ref{sec:aniso}). 
$\Box$
\end{remark}

\begin{remark}[removal of the $\log$ factor]\rm 
Comparing with the analyses 
in~\cite{Beirao2017stability,Brenner;Sung:2018VEM} 
by bridging the VEM bilinear form $a_h(\cdot,\cdot)$ with $|\cdot|_{1,K}$ through 
norm equivalences, we opt to work on a weaker norm $\tnorm{\cdot}= 
a^{1/2}_h(\cdot,\cdot)$ to avoid introducing some extra geometric constraints for 
the equivalence between $|\cdot|_{\frac12, \partial K}$ and the stabilization. The 
benefit of this is that the analysis based on the broken $1/2$-seminorm 
$|\cdot|_{\frac12,\mathcal{E}_K}$ does not pay the $\log\big(h_K/\min_{e\subset 
\partial K} h_e\big)$ factor. This log factor 
is unavoidable as well for the d.o.f.-type stabilization since the proof demands 
certain equivalence between $|\cdot|_{\frac12, \partial K}$ and 
$\norm{\cdot}_{\infty,\partial K}$. The 
introduction of $|\cdot|_{\frac12,\mathcal{E}_K}$ evades this problem, as 
Corollary \ref{corollary:normeq-infhalf} further demonstrates that 
$\norm{v_h}_{\infty,\partial K} \eqsim |v_h|_{\frac12,\mathcal{E}_K}$ if $v_h\in 
V_K$ and $\overline{v}_h^{\partial K}=0$.
$\Box$
\end{remark}

The local error analysis in this section is based on \hyperref[asp:A]{\bf 
A1--A2--A3}. In the subsequent section we shall generalize the anisotropic analysis 
for triangles to a special class of anisotropic 
polygons with \hyperref[asp:A]{\bf A2} violated.  

\section{Error analysis for a special class of anisotropic elements}
\label{sec:aniso}
In this section we shall present error analysis for a special class of anisotropic 
elements, which are obtained by cutting a uniform grid consisting of squares of size 
$h$. The present approach could be possibly extended to 
other shape regular meshes; see Remark \ref{rm:shaperegularmesh}.

Henceforth the polygonal mesh can be anisotropic in the sense that it only satisfies 
\hyperref[asp:A]{\bf A1} but there may exist elements not satisfying 
\hyperref[asp:A]{\bf A2}. 
Namely there is a long edge with a short height in such an element, so the 
trace inequalities and Poincar\'e inequalities cannot be applied freely. We shall 
embed an anisotropic element into a shape regular element, e.g. a square of size 
$h$, and apply trace inequalities on the shape regular element. We shall also make 
use of the cancelation of contributions from different boundary edges to bound the 
interpolation error and the stabilization error. 

\subsection{Interpolation error estimates}
\begin{figure}[h]
\includegraphics[width=\textwidth]{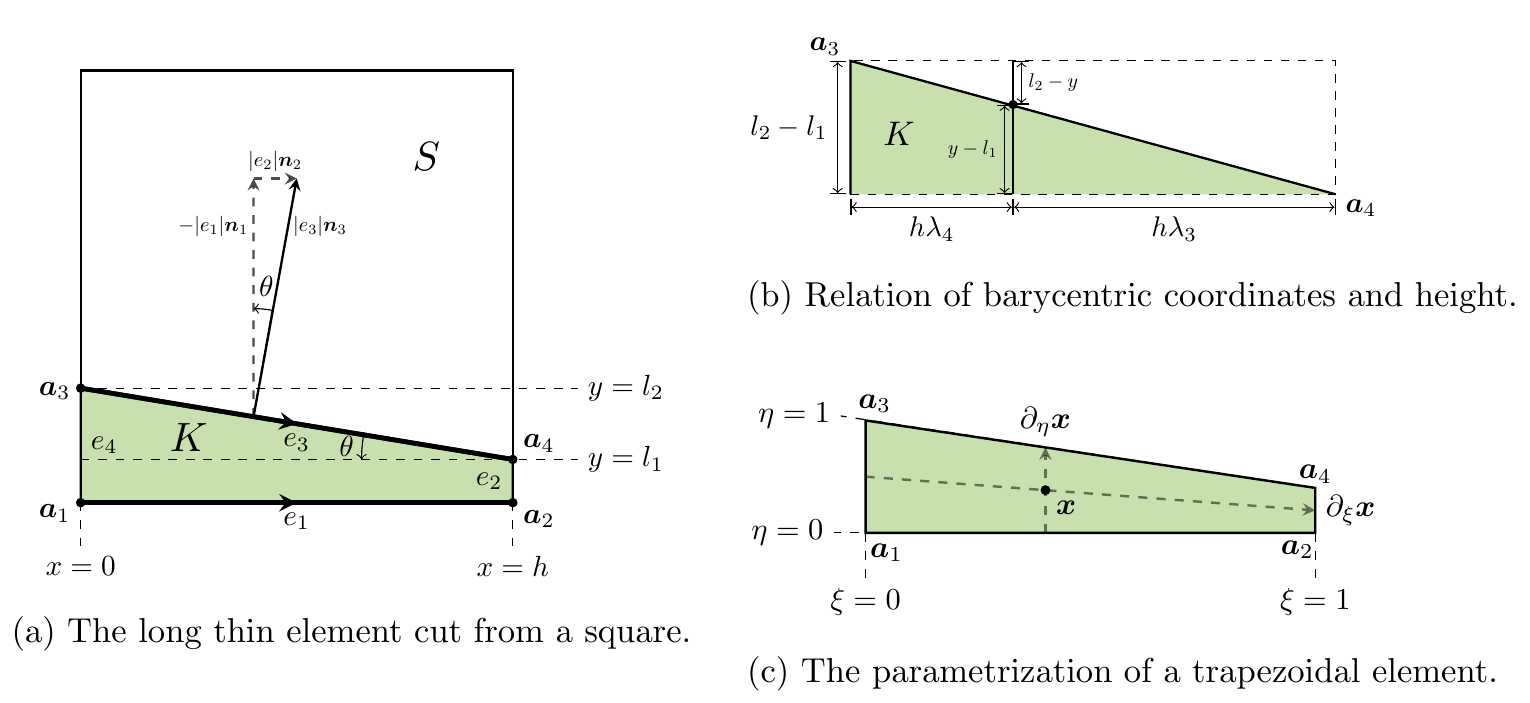}
\caption{The anisotropic element setting used in Lemma 
\ref{lemma:aniso-interp}, \ref{lemma:aniso-projnd}, 
\ref{lemma:aniso-Piu-uIedge}, and \ref{lemma:poincare-e}. }
\label{fig:anielem}
\end{figure}
\begin{lemma}[a projection-type error estimate for the 
interpolation in a cut element]
\label{lemma:aniso-interp}
Assume $K$ is obtained by cutting a square $S$; then
\begin{equation}
\label{est:interp}
\norm{\nabla \Pi  (u - u_I)}_{0,K} \lesssim h_K |u|_{2,S}, \qquad u\in H^2(S).
\end{equation}
\end{lemma}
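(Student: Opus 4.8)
The plan is to handle only the genuinely anisotropic situation. By Lemma~\ref{lemma:SKR} one of the two pieces produced by the single cut is isotropic, so if $K$ itself is isotropic we are finished by Lemma~\ref{lemma:iso-interp} (with $n_{\mathcal E_K}\le 5$ and a uniform $\gamma$). Otherwise $K$ is the smaller piece: a thin triangle cut off at a corner of $S$, or a thin (possibly degenerate) trapezoid. In either case $\partial K$ has exactly two \emph{long} edges $e_1,e_2$ of length $\eqsim h_K$ — one, $e_1$, lying on a side of $S$, the other, $e_2$, being the cut itself — which are \emph{nearly anti-parallel}, together with one or two \emph{short} edges lying on $\partial K\cap\partial S$. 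I would first fix an axis-aligned square $\tilde S$ with $K\subseteq\tilde S\subseteq S$ and ${\rm diam}(\tilde S)\eqsim h_K$ (it exists since $K$ comes from one straight cut), and normalize $u$: because $(u-p)-(u-p)_I=u-u_I$ for every $p\in\mathbb P_1$, we may subtract a linear polynomial so that $\overline u^{\tilde S}=0$ and $\overline{\nabla u}^{\tilde S}=0$; the Poincar\'e inequality on the shape-regular square $\tilde S$ then gives $\norm{\nabla u}_{0,\tilde S}\lesssim h_K\snorm{u}_{2,\tilde S}\le h_K\snorm{u}_{2,S}$ and $\norm{u}_{0,\tilde S}\lesssim h_K^2\snorm{u}_{2,S}$.

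Next I would reduce the estimate to a boundary sum. Since $\nabla\Pi v=\overline{\nabla v}^K$ for $v\in H^1(K)$, Green's formula yields $\nabla\Pi(u-u_I)=|K|^{-1}\sum_{e\subset\partial K}\bs n_e E_e$, where $E_e:=\int_e(u-u_I)\dd s$ is the error of the trapezoidal rule for $\int_e u$, since $u_I|_e$ is the linear interpolant of $u$ at the endpoints of $e$. Hence $\norm{\nabla\Pi(u-u_I)}_{0,K}=|K|^{-1/2}\bigl|\sum_e\bs n_e E_e\bigr|$, and it suffices to prove $\bigl|\sum_e\bs n_e E_e\bigr|\lesssim h_K|K|^{1/2}\snorm{u}_{2,S}$. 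For a short edge $e$, the standard bound $|E_e|\le|e|^{1/2}\norm{u-u_I}_{0,e}\lesssim|e|^2\snorm{u}_{\frac32,e}$ together with a trace of $\nabla u$ from a shape-regular sub-triangle of $\tilde S$ based on $e$ gives $|E_e|\lesssim|e|^{3/2}h_K\snorm{u}_{2,S}$; since $|e|\lesssim|K|/h_K$ and $|K|\lesssim h_K^2$, these contributions are of higher order than $h_K|K|^{1/2}\snorm{u}_{2,S}$, hence harmless.

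The crux is the long pair $\bs n_{e_1}E_{e_1}+\bs n_{e_2}E_{e_2}$. Write $\bs n_{e_2}=-\bs n_{e_1}+\bs\delta$, where $\bs\delta$ records the slope and the arc-length Jacobian of the straight cut $e_2$ relative to the side $e_1$ and satisfies $|\bs\delta|\lesssim|K|h_K^{-2}$; combined with $|E_{e_2}|\lesssim h_K^{5/2}\snorm{u}_{2,S}$ (interpolation on $e_2$ plus a trace from $\tilde S$), the term $\bs\delta E_{e_2}$ is again of higher order. It remains to estimate $E_{e_1}-E_{e_2}$. Parametrizing both edges over the common Cartesian coordinate $x$ and pulling out the constant arc-length factor of $e_2$, $E_{e_1}-E_{e_2}$ equals, up to a negligible correction, the trapezoidal-rule error of the \emph{gap function} $g(x):=u(x,0)-u(x,c(x))$, where $y=c(x)$ describes $e_2$; since $g$ vanishes at the endpoints, $|E_{e_1}-E_{e_2}|\lesssim h_K\norm{g'}_{L^1(0,h_K)}$. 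The decisive identity is
\[
g'(x)=-\int_0^{c(x)}\partial_{xy}u(x,t)\dd t-c'(x)\,\partial_y u(x,c(x)),
\]
so that $\norm{g'}_{L^1}\le\int_K|\partial_{xy}u|+|c'|\int_{e_2}|\partial_y u|\dd s\lesssim|K|^{1/2}\snorm{u}_{2,K}+|c'|\,|e_2|^{1/2}\norm{\partial_y u}_{0,e_2}$: the first term is an $L^2$-bound \emph{over the thin element $K$}, supplying exactly the needed factor $|K|^{1/2}$, while the second is controlled by $|c'|\lesssim|K|h_K^{-2}$ and a trace of $\partial_y u$ from a shape-regular subset of $\tilde S\setminus K$ based on $e_2$. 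Collecting everything gives $\bigl|\sum_e\bs n_e E_e\bigr|\lesssim h_K|K|^{1/2}\snorm{u}_{2,S}$, and hence $\norm{\nabla\Pi(u-u_I)}_{0,K}\lesssim h_K\snorm{u}_{2,S}$.

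I expect the main obstacle to be precisely this cancellation step. A naive edge-by-edge estimate produces a power of $h_K$ that is off by a factor $\eqsim(h_K^2/|K|)^{1/2}$, which blows up as the element degenerates; recovering it requires both (i) recognizing $E_{e_1}-E_{e_2}$ as the quadrature error of $g$ and bounding it by the $H^2$-mass of $u$ restricted to $K$ (this is where the gain $|K|^{1/2}$ comes from), and (ii) checking that every residual term — the arc-length correction, the normal defect $\bs\delta$, and the short edges — is genuinely of higher order, which works only because all traces are taken on the shape-regular square $\tilde S$ rather than on the anisotropic $K$, and because $|K|\lesssim h_K^2$.
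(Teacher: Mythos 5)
Your proposal is correct and rests on the same key mechanism as the paper's proof: the two long, nearly anti-parallel edges $e_1$ (on $\partial S$) and the cut must be paired, and the gain of the factor $|K|^{1/2}$ comes from the fact that the mixed derivative $\partial_{xy}u$ is integrated only over the thin region between them. The packaging is genuinely different, though. The paper keeps the pointwise interpolation error $(u-u_I)(\bs r_1)-(u-u_I)(\bs r_3)$, represents it via barycentric coordinates and the fundamental theorem of calculus along coordinate paths, and handles the residual vertical contributions with a mean-value-theorem step producing $\partial_{yy}u$ over a thin strip; your version first integrates over the edges ($\nabla\Pi(u-u_I)=|K|^{-1}\sum_e\bs n_e E_e$ by Green's formula) and then identifies $E_{e_1}-E_{e_3}$ as the trapezoidal-rule error of the gap function $g(x)=u(x,0)-u(x,c(x))$, whose derivative is exactly $-\int_0^{c(x)}\partial_{xy}u\,dt-c'\partial_yu(x,c(x))$. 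This is arguably cleaner: it replaces the barycentric/MVT bookkeeping by a one-line identity for $g'$, at the modest cost of needing the normalization $\overline{\nabla u}^{\tilde S}=0$ to absorb the $\partial_yu$ trace on the cut (the paper avoids this because all its residual terms already carry two derivatives). Two small inaccuracies, neither fatal: the interpolation-plus-trace bound on a long edge gives $|E_{e}|\lesssim h_K^{2}|u|_{2,S}$, not $h_K^{5/2}|u|_{2,S}$ (the latter is dimensionally inconsistent, and it is the exponent $2$ that makes $\bs\delta E_{e_3}$ cleanly of higher order); and it is $g$ minus its endpoint linear interpolant, not $g$ itself, that vanishes at the endpoints — the bound $|E_{e_1}-E_{e_3}|\lesssim h_K\|g'\|_{L^1}$ survives because the interpolant's slope is controlled by $\|g'\|_{L^1}/h_K$.
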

\begin{proof}
We process as before (cf.~\eqref{est:isointerp}), by letting $q = \Pi  (u 
- u_I)$ to obtain 
\begin{equation}
\label{eq:u-uIonbd}
\norm{\nabla \Pi  (u - u_I)}_{0,K} = \sum_{e\subset \partial K} (u - u_I, 
\nabla q \cdot \bs{n})_e.
\end{equation}
If $K$ is a right triangle, the error analysis can be applied for 
triangles satisfying the maximum angle condition~\cite{Babuska.I;Aziz.A1976}. If $K$ 
is a pentagon, $K$ is isotropic by Lemma \ref{lemma:SKR} and thus the estimate on 
isotropic polygons can be applied. So here $K$ is assumed to a right trapezoid, of 
which one side is aligned with $S$ (see Figure \ref{fig:anielem}(a)), and $l_2>l_1$ 
and $l_1$ could vanish in which case the right trapezoid is degenerated to a 
triangle. For edges $e_2$ and $e_4$, \hyperref[asp:A]{\bf A1--A2} holds 
and thus proofs in Lemma \ref{lemma:iso-interp} can be applied, while for edge 
$e_1$ and $e_3$, $l_e\ll h_e$. 

By 
parametrizing using $0\leq x\leq h$, the edge $e_1$ from $\bs{a}_1$ to $\bs{a}_2$: 
$\bs{r}_1 = \langle x,0\rangle$, and $e_3$ from $\bs{a}_3$ to $\bs{a}_4$: 
$\bs{r}_3 = \langle x, l_1 - x\tan \theta\rangle$ 
(where $\cos\theta = -\bs{n}_1\cdot \bs{n}_3$), and decomposing the outer unit 
normal to $e_3 $ as $\bs{n}_3 = -\bs{n}_1 \cos\theta + \bs{n}_2 \sin\theta$, the 
boundary integrals on $e_1$ and $e_3$ can be written as
\begin{equation}\label{eq:u-uIdq}
\begin{aligned}
&\;(u - u_I, \nabla q \cdot \bs{n})_{e_1} + 
(u - u_I, \nabla q \cdot \bs{n})_{e_3}
\\
= & \underbrace{\int_0^h \left[(u-u_I)(\bs{r}_1) -(u-u_I)(\bs{r}_3) \right] \nabla 
q\cdot \bs{n}_1 \dx}_{(\flat)} + 
\underbrace{\int_0^h (u-u_I)(\bs{r}_3) \nabla q\cdot \bs{n}_2\tan\theta 
\dx}_{(\natural)}.
\end{aligned}
\end{equation}
Before moving into the details, we use a special case: $K$ is a rectangle with side 
$h$ and height $l$ to explain briefly the cancelation technique. In this case, $\bs 
n_3 = -\bs n_1$ and thus the term $(\natural)$ disappears. The interpolation error 
$u-u_I$ along the horizontal edges $\bs e_1$ and $\bs e_3$ is of order $h$ and the 
difference $(u-u_I)(\bs{r}_1) -(u-u_I)(\bs{r}_3)$ is of order $l$. So a refined 
scale $l h$ instead of $h^2$ is obtained.

For a general trapezoid, $\bs e_3$ may not be parallel to $\bs e_1$. But the factor $h\tan \theta = l_2 - l_1$ weighing like a short edge.
The integral $(\natural)$ above can be dealt with using a similar argument with 
\eqref{eq:intp-1}:
\begin{equation}
\label{eq:int-natural}
|(\natural)|
\lesssim h(l_2-l_1) |u|_{\frac{3}{2},e_3} |\nabla q| 
\lesssim |K|^{1/2}|u|_{2,S} \norm{\nabla q}_{0,K}.
\end{equation}

The integral $(\flat)$ in~\eqref{eq:u-uIdq} is now the focus. For a point $ \bs r_1 = \langle x, 0\rangle \in e_1$,  the barycentric coordinate $\lambda_i(\bs r_1)$ ($i=1,2$) are positive numbers satisfying $\langle x, 0\rangle = \lambda_1(x,0) \bs a_1 +  \lambda_2(x,0) \bs a_2$.
Using the fundamental 
theorem of calculus, it is straightforward to arrive at the following error 
representation on $e_1$:
\begin{equation}\label{eq:e1}
(u-u_I)(\bs{r}_1) = \lambda_1(x,0)\int^x_0 \partial_x u(t,0)\dd t
- \lambda_2(x,0)\int^h_x \partial_x u(t,0)\dd t.
\end{equation}
Similarly on $e_3$, for a point $\bs{r}_3 = \langle x,y\rangle \in e_3$, introduce the barycentric coordinate $\lambda_i(\bs r_3)$ ($i=3,4$) satisfying $\langle x, y\rangle = \lambda_3(x,y) \bs a_3 +  \lambda_4(x,y) \bs a_4$. We have the error representation on $e_3$:
\begin{equation}
(u-u_I)(\bs{r}_3) = \lambda_3(\bs r_3)\int_{\bs a_3}^{\bs r_3} \partial_t u \dd t
- \lambda_4(\bs r_3)\int_{\bs r_3}^{\bs a_4} \partial_t u\dd t.
\end{equation}
To be able to cancel with terms in~\eqref{eq:e1}, we further decompose the line 
integral along $\bs e_3$ into coordinate directions:  
\begin{equation}
\begin{aligned}
(u-u_I)(\bs{r}_3) 
& = -\lambda_3(x,y)\int^{l_2}_y \partial_y u(x,\tau)\dd\tau
+ \lambda_3(x,y)\int^x_0 \partial_x u(t,l_2)\dd t
\\
&\quad + \lambda_4(x,y)\int^{y}_{l_1} \partial_y u(x,\tau)\dd\tau
- \lambda_4(x,y)\int^h_x \partial_x u(t,l_1)\dd t.
\end{aligned}
\end{equation}
It follows from the fact that $\lambda_1(x,0) = \lambda_3(x,y)$ and $\lambda_2(x,0) 
= \lambda_4(x,y)$:
\begin{equation}
\begin{aligned}
& (u-u_I)(\bs{r}_1) -(u-u_I)(\bs{r}_3)
\\
= & \; \underbrace{\lambda_1(x,0)\int^x_0 \big[\partial_x u(t,0) - \partial_x 
u(t,l_2) \big]\dd t
+ \lambda_2(x,0)\int^h_x \big[\partial_x u(t,l_1) - \partial_x u(t,0) 
\big]\dd t}_{(\dagger)}
\\
& + \underbrace{\lambda_3(x,y)\int^{l_2}_y \partial_y u(x,\tau)\dd\tau
- \lambda_4(x,y)\int^y_{l_1} \partial_y u(x,\tau)\dd\tau}_{(\ddagger)}.
\end{aligned}
\end{equation}
Assuming $l_1\neq l_2$, otherwise $(\ddagger)=0$, applying the mean value theorem on 
both integrals in $(\ddagger)$, there exists $\xi_2 \in (y,l_2)$, and $\xi_1\in 
(l_1,y)$ such that
\begin{equation}
\label{eq:second}
\begin{aligned}
(\ddagger) &= \lambda_3(x,y) (l_2-y) \partial_y u(x,\xi_2) 
- \lambda_4(x,y) (y-l_1) \partial_y u(x,\xi_1)\\
&
= \frac{(l_2-y)(y-l_1)}{l_2-l_1} 
 \int^{\xi_2}_{\xi_1} \partial_{yy} u(x,\tau)\dd \tau
\leq \frac{l_2-l_1}{4}
\int^{l_2}_{l_1} |\partial_{yy} u(x,y)|\dd y .
\end{aligned}
\end{equation}
wherein the second step the geometric meaning of barycentric coordinates 
(see Figure \ref{fig:anielem}(b)) is used:
\begin{equation}
\frac{\lambda_3(x,y)}{y-l_1} = \frac{\lambda_4(x,y)}{l_2-y} = \frac{1}{l_2-l_1}.
\end{equation}
For $(\dagger)$, an estimate can be obtained as follows:
\begin{equation}\label{eq:first}
\begin{aligned}
\bigl|(\dagger) \bigr| & \leq 
{\left\vert 
\lambda_1(x,0)\int^x_0 \int^{l_2}_0\partial_{xy} u(t,\tau)\dd\tau dt 
\right\vert} 
+  
{\left\vert 
\lambda_2(x,0)\int^h_x \int^{l_1}_0\partial_{xy} u(t,\tau)\dd\tau  dt
\right\vert} 
\\
& \leq \norm{\partial_{xy} u}_{0,S} h^{1/2}l_1^{1/2}
+\norm{\partial_{xy} u}_{0,S} h^{1/2}l_2^{1/2} \lesssim |K|^{1/2} |u|_{2,S}.
\end{aligned}
\end{equation}
By the estimate in~\eqref{eq:second}, together with~\eqref{eq:first}, the integral 
$(\flat)$ in~\eqref{eq:u-uIdq} can be estimated as follows:
\begin{equation}
\label{eq:int-flat}
\begin{aligned}
|(\flat)| \leq  & \; \int^h_0 |(\dagger)| \, |\nabla q|\,\dx 
+ \frac{l_2-l_1}{4}\int^h_0 
\int^{l_2}_{l_1} |\partial_{yy} u(x,y)|\,|\nabla q|\dd y \dx
\\ 
\lesssim & 
\; \; h |K|^{1/2} |\nabla q|\,|u|_{2,S}  + 
\frac{l_2-l_1}{4} \left(\int_K |\partial_{yy} u(x,y)|^{2} \dx \dd 
y\right)^{\frac{1}{2}}
|K|^{1/2} |\nabla q|,
\end{aligned}
\end{equation}
which can be bounded by $h_K|u|_{2,S} \norm{\nabla q}_{0,K}$. In summary, we have 
proved the following inequality and~\eqref{est:interp} is obtained by canceling 
$\norm{\nabla q}_{0,K}$ on both sides:
\begin{equation}
\norm{\nabla \Pi  (u - u_I)}_{0,K}^2 \lesssim h_K|u|_{2,S} \norm{\nabla q}_{0,K}.
\end{equation}
\end{proof}

\begin{remark}[generalization to certain anisotropic 
polygons]\label{rm:generalization}
\rm
The proof of the interpolation estimate is not restricted to anisotropic 
quadrilaterals. 
Heuristically speaking, if there is a long edge $e$ (which can be $e_1$ in 
Figure \ref{fig:anielem}(a)) that supports a 
short height $l_e$ toward the interior of an anisotropic element $K$. Meanwhile 
if there exists another long edge ($e_3$ in the aforementioned case) on $\partial K$ 
pairing with this long edge, then the cancelation will occur for 
terms $(u - u_I, \nabla q \cdot \bs{n})_{e_i}$ ($i=1,3$) among the ones in 
\eqref{eq:u-uIonbd}. Even on quadrilaterals, our analysis on VEM relaxes the 
stringent constraints imposed on isoparametric elements for anistropic 
quadrilaterals (cf.~\cite{Acosta2000error} and the references therein). However, 
a precise characterization of the class of anisotropic elements for which this 
analysis can be applied seems difficult. For example, the same cancelation argument 
can be generalized to the element in Figure \ref{fig:starelem}(b), but not to 
the element in Figure \ref{fig:starelem}(a). The existence of three 
long edges in Figure \ref{fig:starelem}(a) forbids a straightforward pairing and 
cancelation trick to yield the desirable estimate.
$\Box$
\end{remark} 

\begin{figure}[h]
\centering
\includegraphics{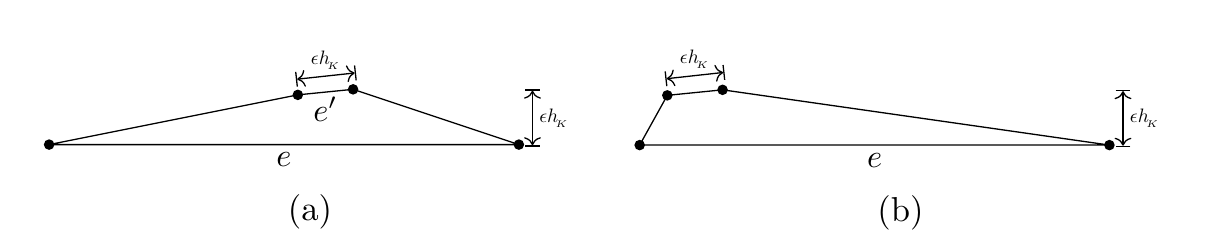}
\caption{The examples of highly degenerate quadrilaterals while $\epsilon=o(h_K)$ as 
$h_K\to 0$. }
\label{fig:aniso-interpolation}
\end{figure}

\subsection{Normal derivative of the projection error}
For the term $\nabla \bigl(u - \Pi  u \bigr)\cdot \bm{n}$, only the anisotropic 
quadrilateral needs attention as if $K$ is a triangle cut from a square, there is no 
stabilization due to $(\operatorname{I}-\Pi)V_K=\varnothing$. As we have mentioned 
earlier, the trace inequalities are applied towards a larger and shape regular 
element. Then a refined Poincar\'e inequality in the following lemma is needed to 
estimate the approximation on this extended element. The reason is that one cannot 
apply the average-type Poincar\'e inequality directly over a subdomain (cf. Lemma 
\ref{lemma:poincare-subset}), with $\omega = |K|$ as $|K|\ll h_S^2$ when $K$ is 
anisotropic and thus $h_S/|K|^{1/2}\gg 1$. 

\begin{lemma}[Poincar\'{e} inequality on an anisotropic cut element]
\label{lemma:poincare-aniso}
Let $S$ be a square and $K\subset S$ is a quadrilateral from cutting $S$ 
by a straight line; then
\begin{equation}
\label{eq:poincare-aniso}
\norm{v - \overline{v}^K}_{0,S} \lesssim h_{S} \norm{\nabla v}_{0,S}, \quad v\in 
H^1(S),
\end{equation}
holds with a constant independent of the ratio $|K|/|S|$.
\end{lemma}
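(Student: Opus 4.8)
The plan is to reduce the claimed inequality to the standard Poincaré inequality on the square $S$, where we already know (Lemma \ref{lemma:poincare-convex} applied to the convex set $S$) that $\norm{v-\overline{v}^{S}}_{0,S}\lesssim h_S\norm{\nabla v}_{0,S}$ with an absolute constant. The only issue is that the subtracted constant on the left-hand side is the average $\overline{v}^{K}$ over the \emph{sub}domain $K$ rather than over $S$. Since the optimal constant subtraction is $\overline{v}^{S}$, we have $\norm{v-\overline{v}^{S}}_{0,S}\le \norm{v-c}_{0,S}$ for every constant $c$, but that inequality goes the wrong way for replacing $\overline{v}^{S}$ by $\overline{v}^{K}$. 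Instead I would bound
\begin{equation}
\label{eq:poincare-plan-1}
\norm{v-\overline{v}^{K}}_{0,S}\le \norm{v-\overline{v}^{S}}_{0,S}
+ |S|^{1/2}\,\bigl|\overline{v}^{S}-\overline{v}^{K}\bigr|,
\end{equation}
so the whole task is to control the difference of the two averages by $h_S\norm{\nabla v}_{0,S}$ with a constant that does \emph{not} blow up as $|K|/|S|\to 0$.

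For the average difference, write $\overline{v}^{S}-\overline{v}^{K} = |K|^{-1}\int_{K}\bigl(\overline{v}^{S}-v\bigr)$, hence by Cauchy--Schwarz $\bigl|\overline{v}^{S}-\overline{v}^{K}\bigr|\le |K|^{-1/2}\norm{v-\overline{v}^{S}}_{0,K}\le |K|^{-1/2}\norm{v-\overline{v}^{S}}_{0,S}\lesssim |K|^{-1/2}h_S\norm{\nabla v}_{0,S}$. This reintroduces exactly the bad factor $|S|^{1/2}/|K|^{1/2}$ in \eqref{eq:poincare-plan-1}, so the naive route fails and something geometric about \emph{cut} elements must be used. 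The key observation is that when a straight line cuts the square $S$ into $K$ and $R=S\setminus K$ with $|K|$ small, the picture is essentially one-dimensional: slice $S$ by lines parallel to the cut. Along almost every such slice, both $K$ and $R$ (or at least the large piece $R$) have length comparable to $h_S$, so a \emph{one-dimensional} Poincaré / trace estimate controls the mean of $v$ over the short intersection with $K$ by the mean over a full slice plus $h_S$ times the slice integral of $|\partial v|$ in the slicing direction. The plan is therefore: choose coordinates so the cut line is (say) nearly vertical; for a.e.\ horizontal line $y=$const, use the 1D inequality $\bigl|\,\overline{v(\cdot,y)}^{\,I_K(y)} - \overline{v(\cdot,y)}^{\,I_S(y)}\,\bigr|\lesssim \int_{I_S(y)}|\partial_x v(x,y)|\,\dd x$ on the intervals $I_K(y)=\{x:(x,y)\in K\}\subset I_S(y)=\{x:(x,y)\in S\}$ (valid with an absolute constant because $|I_S(y)|\le h_S$, with no lower bound on $|I_K(y)|$ needed), then integrate in $y$ and apply Cauchy--Schwarz to pass to the $L^2$ norms.

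To assemble the estimate one combines: (i) $\overline{v}^{K}=|K|^{-1}\int\!\!\int_{K} v = |K|^{-1}\int (\int_{I_K(y)} v(x,y)\dd x)\dd y$, written as a $|K|$-weighted average over $y$ of the slice-averages $\overline{v(\cdot,y)}^{\,I_K(y)}$; (ii) the analogous representation of $\overline{v}^{S}$ as an average over $y$ of $\overline{v(\cdot,y)}^{\,I_S(y)}$; (iii) the 1D slice bound above for $\overline{v(\cdot,y)}^{\,I_K(y)}-\overline{v(\cdot,y)}^{\,I_S(y)}$; and (iv) the full 1D Poincaré bound $\bigl|\overline{v(\cdot,y)}^{\,I_S(y)}-\overline{v}^{S}\bigr|$ handled by the standard square Poincaré estimate after integrating in $y$. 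Summing these and using $|I_S(y)|\le h_S$ gives $|\overline{v}^{S}-\overline{v}^{K}|\lesssim h_S^{-1}\norm{\nabla v}_{0,S}\cdot(\text{something of order }1)$ once one keeps track of the $|S|^{1/2}\sim h_S$ weights — more precisely $|S|^{1/2}|\overline{v}^{S}-\overline{v}^{K}|\lesssim h_S\norm{\nabla v}_{0,S}$ with an absolute constant. Plugging back into \eqref{eq:poincare-plan-1} and using the square Poincaré inequality for the first term yields \eqref{eq:poincare-aniso}. The main obstacle is precisely step (iii): one must show the 1D constant is genuinely independent of how short $I_K(y)$ is, which is where the special structure ``$K$ is cut from a square by one straight line'' — guaranteeing $I_K(y)$ is a single interval and $I_S(y)$ has length $\le h_S$ — is essential; for a general anisotropic subdomain the slices need not be intervals and the argument would break.
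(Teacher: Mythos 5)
Your proposal is correct in outline but follows a genuinely different route from the paper. The paper bridges through the mean over a \emph{long boundary edge} $e$ of $K$ lying on $\partial S$: it writes $\norm{v-\overline{v}^K}_{0,S}\le \norm{v-\overline{v}^{e}}_{0,S}+\norm{\overline{v}^K-\overline{v}^{e}}_{0,S}$, controls the first term by the edge-average Poincar\'e inequality \eqref{eq:poincare-eR} on $S$, and kills the dangerous factor $|K|^{-1/2}$ in the second term by combining the trapezoid estimate $\norm{v-\overline{v}^e}_{0,K}\lesssim (l_eh_e)^{1/2}|v|_{\frac12,e}+l_e\norm{\nabla v}_{0,K}$ with the area bound $|K|\ge l_eh_e/2$ and the trace inequality \eqref{eq:tr-l2} on $S$. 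You instead bridge through $\overline{v}^S$ and use a Fubini/slicing argument with a one-dimensional Poincar\'e inequality on each slice; this is more elementary and self-contained (it bypasses Lemma \ref{lemma:poincare-e} and the trace inequality entirely), and your diagnosis of why the naive route fails is exactly right. The two proofs exploit the same structural fact in different guises: the width of $K$ transverse to its long direction varies linearly, so its maximum is at most twice its mean. In the paper this is $|K|\ge l_eh_e/2$; in your argument it is the (unstated but necessary) bound $\int\bigl(|I_K(y)|/|K|\bigr)^2\,\dd y\le \sup_y|I_K(y)|/|K|\le 2$, without which the weighted Cauchy--Schwarz in $y$ in steps (i)--(iii) would reintroduce $|K|^{-1/2}$. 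That verification, and the remark that the slices must traverse $K$ across its \emph{thin} dimension (with the cut nearly vertical, horizontal slices — slicing the other way makes the weight blow up), are the two details you should make explicit; both are immediate for a trapezoid cut from a square by one straight line, so the argument closes.
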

\begin{proof}
The illustration of a possible configuration of $S$ versus $K$ in Figure 
\ref{fig:anielem} is used. We shall use the average on $e = e_1$ as a bridge:
$$
\norm{v - \overline{v}^K}_{0,S}\leq \norm{v - \overline{v}^{e}}_{0,S} + 
\norm{\overline{v}^K - \overline{v}^e}_{0,S}. 
$$
The first term can be estimated using the Poincar\'e inequality with average zero on 
an edge (see~\eqref{eq:poincare-eR} in Lemma \ref{lemma:poincare-e}):
$$
\norm{v - \overline{v}^{e}}_{0,S} \lesssim h_S\|\nabla v\|_{0,S}.
$$
For the second term, by the definition of $\overline{v}^K$ and Cauchy--Schwarz 
inequality,
\begin{align*}
\norm{\overline{v}^K - \overline{v}^e}_{0,S} \leq {|S|^{1/2}}{|K|^{-1/2}}\| v - 
\overline{v}^e \|_{0,K}.
\end{align*}
With a slight notation change of the proof of~\eqref{eq:poincare-eA}, we have
$$
\norm{v - \overline{v}^e}_{0,K} \lesssim (l_e h_e)^{1/2} |v|_{\frac12,e} + l_e 
\norm{\nabla v}_{0,K}
$$
with $l_e = (l_1+l_2)/2$. Then the desired inequality follows from applying the fact 
$|K|\geq l_eh_e/2$ and the trace inequality~\eqref{eq:tr-l2} on $S$:
\begin{align}
\norm{\overline{v}^K - \overline{v}^e}_{0,S} \lesssim h_S \left (|v|_{\frac12,e} + 
\|\nabla v\|_{0,K} \right )\lesssim h_S\| \nabla v\|_{0,S}.
\end{align}
\end{proof}

\begin{lemma}[error estimate for the normal derivative of projection on anisotropic 
cut elements]
\label{lemma:aniso-projnd}
Let $K$ be an anisotropic quadrilateral cut from a square 
$S$, with at least one edge $e\subset \partial K$ satisfying \hyperref[asp:A]{\bf 
A1} only, not \hyperref[asp:A]{\bf A2}. Then for all $e\subset \partial K$
\begin{equation}
h_e^{1/2}\norm{\nabla \bigl(u - \Pi  u \bigr)\cdot \bm{n}}_{0,e}
\lesssim h_K |u|_{2,S}, \qquad u\in H^2(S).
\end{equation}
\end{lemma}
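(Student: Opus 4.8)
The plan is to bound the normal derivative of the projection error edge by edge, always routing the trace through the shape-regular square $S$ (or through the isotropic piece of $S$ lying on the other side of the cut), so that the anisotropy of $K$ is never exploited directly; the genuinely anisotropic difficulty has already been isolated in the refined Poincar\'e inequality of Lemma~\ref{lemma:poincare-aniso}.

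First I would fix an edge $e\subset\partial K$ and write $\bm n$ for its (constant) outward unit normal, extended as a constant vector field. Since $\Pi u$ is linear with $\nabla\Pi u=\overline{\nabla u}^K$, one has on $e$ the identity $\nabla(u-\Pi u)\cdot\bm n=\partial_n u-\overline{\partial_n u}^K=:g$, where $\partial_n u:=\bm n\cdot\nabla u$. I would record three facts: $g\in H^1(S)$, $\overline{g}^K=0$, and $\|\nabla g\|_{0,S}=\|\nabla\partial_n u\|_{0,S}\lesssim|u|_{2,S}$. I would also note $h_e\le h_S$ and, crucially, $h_K\eqsim h_S$: a quadrilateral obtained by cutting $S$ with a straight line arises from a cut joining two opposite sides of $S$, so $K$ contains an entire side of $S$, whence $h_K\gtrsim h_S$, while $h_K\le h_S$ because $K\subset S$.

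Next I would apply a trace inequality on a shape-regular region having $e$ as one of its edges. If $e\subset\partial S$, I use the standard $L^2$ trace inequality on the square, $\|g\|_{0,e}^2\lesssim h_S^{-1}\|g\|_{0,S}^2+h_S\|\nabla g\|_{0,S}^2$. If instead $e$ is the cut segment, it is also an edge of the complementary polygon $R$ (with $S=K\cup R$), which by Lemma~\ref{lemma:SKR} is isotropic with $h_R\eqsim h_S$ and on which $e$ satisfies \textbf{A1--A2}; then the weighted trace inequality~\eqref{eq:tr-h}, applied on the triangle $T_e\subset R$, gives $h_e^{1/2}\|g\|_{0,e}\lesssim\|g\|_{0,R}+h_e\|\nabla g\|_{0,R}$. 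Either way, using $h_e\le h_S$ and $R\subset S$, I obtain $h_e^{1/2}\|g\|_{0,e}\lesssim\|g\|_{0,S}+h_S\|\nabla g\|_{0,S}$. For the first term I would invoke Lemma~\ref{lemma:poincare-aniso} with $v=\partial_n u$ (whose average \emph{over $K$}, not over $S$, has been subtracted off): $\|g\|_{0,S}=\|\partial_n u-\overline{\partial_n u}^K\|_{0,S}\lesssim h_S\|\nabla\partial_n u\|_{0,S}\lesssim h_S|u|_{2,S}$.

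Combining the last two estimates with $\|\nabla g\|_{0,S}\lesssim|u|_{2,S}$ and $h_S\eqsim h_K$ yields $h_e^{1/2}\|\nabla(u-\Pi u)\cdot\bm n\|_{0,e}=h_e^{1/2}\|g\|_{0,e}\lesssim h_K|u|_{2,S}$, which is the claim. I expect the trace step to be the only real obstacle: a trace inequality applied directly on the anisotropic $K$ would carry the offending factor $h_e/l_e\gg1$, so it is essential to exploit that $e$ also bounds a shape-regular region of diameter $\eqsim h_S$ --- $S$ itself when $e\subset\partial S$, or the isotropic complement $R$ when $e$ is the cut --- along which the $h$-scaled trace inequality holds; the rest is the routine pairing of a scaled trace inequality with a Poincar\'e inequality.
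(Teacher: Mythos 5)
Your proof is correct and follows essentially the same route as the paper's: trace the normal derivative toward the shape-regular square $S$ (or, for the cut segment, toward the isotropic complement $R$ via Lemma~\ref{lemma:SKR}), and then control the resulting $\|\nabla(u-\Pi u)\|_{0,S}$-type term with the refined Poincar\'e inequality of Lemma~\ref{lemma:poincare-aniso}, exactly as in~\eqref{eq:u-piu-est}. The only (harmless) deviation is that you route the short edges through $S$ as well, whereas the paper treats them locally in $K$ via \textbf{A2} and~\eqref{eq:tr-h}; since $h_S\eqsim h_K$, both give the stated bound.
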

\begin{proof}
For edges satisfying \hyperref[asp:A]{\bf A1-A2}, one can use the trace 
estimate~\eqref{eq:tr-h} with the weight $h_e$ the same way with the estimate 
\eqref{eq:u-Piun} in Lemma \ref{lemma:isopr-nd}. For a long 
edge not satisfying \hyperref[asp:A]{\bf A2}, i.e., $l_e\ll h_e \eqsim h_K$, there 
are two cases. If $e\cap \partial S\neq \varnothing $ (e.g., $e_1$ in Figure 
\ref{fig:anielem}(a)), the trace estimate~\eqref{eq:tr-h} can be applied treating 
$e$ as a boundary edge to $S$ with the weight $h_S$.
Otherwise, $e= \partial K \cap \partial R$ is the 
cut line segment (e.g., $e_3$ in Figure \ref{fig:anielem}(a)), where $R := 
S\backslash (K\cup e)$. By Lemma~\ref{lemma:SKR}, $R$ is isotropic, to which the 
trace inequality~\eqref{eq:tr-l2} can be applied with the weight $h_R\eqsim h_S$.
In both cases, we can get
\begin{equation}
\label{eq:u-piu-est}
\norm{\nabla \bigl(u - \Pi  u \bigr)\cdot \bm{n}}_{0,e} 
\lesssim h_S^{-1/2}\norm{\nabla \bigl(u - \Pi  u \bigr)}_{0,S} + h_S^{1/2} |u|_{2,S}.
\end{equation}
Lastly, since $\nabla \Pi u = \nabla \Pi^{\nabla}_K u =\overline{\nabla u}^{K}$, 
applying the Poincar\'{e} inequality on a cut element in Lemma 
\ref{lemma:poincare-aniso} to the term 
$\norm{\nabla \bigl(u - \Pi  u \bigr)}_{0,S}$ yields the desired result.
\end{proof}

\subsection{Stabilization term}
In an anisotropic element $K$, the stabilization error in~\eqref{eq:err-apriori} is 
again split as $u_I - \Pi  u_I = (u_I - \Pi  u)  + (\Pi  u  - \Pi  u_I)$. 
Using the representation~\eqref{eq:uIpiue}, the following similar estimate 
can be proved for the first term on $e\subset \partial K$,
\begin{equation}
\label{eq:uiPiuedge}
|u_I - \Pi  u|_{\frac{1}{2}, e} \leq h_e^{1/2}\norm{\partial_e (u - \Pi  
u)}_{0,e}\lesssim h_S |u|_{2,S}, \qquad u\in H^2(S).
\end{equation}
When applying the weighted trace inequality~\eqref{eq:tr-l2} following 
\eqref{eq:uIpiue-est}, the 
argument is the same with the one in Lemma \ref{lemma:iso-stab} if $e$ is 
a short edge. If $e$ is a long edge, the difference is that we treat $e$ as one 
boundary edge not to $K$, but to the square $S$ or $K$'s neighboring isotropic element, 
as the trick used in proving~\eqref{eq:u-piu-est}. The 
second term $\Pi  u  - \Pi u_I$ is the focus of the subsequent lemma. 
\begin{lemma}
\label{lemma:aniso-Piu-uIedge}
Let $K$ be an anisotropic quadrilateral cut from a square $S$, and $K$ satisfies 
\hyperref[asp:A]{\bf A1} only, then it holds that on any 
$e\subset \partial K$
\begin{equation}\label{eq:PiuIedge}
\snorm{\Pi  (u -u_I)}_{\frac12,e} \lesssim h_K|u|_{2,S},\qquad u\in H^2(S).
\end{equation}
\end{lemma}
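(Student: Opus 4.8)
<br>

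The quantity to estimate is the broken half-seminorm $\snorm{\Pi(u-u_I)}_{\frac12,e}$ where $\Pi(u-u_I)=\Pi u-\Pi u_I$ is a linear polynomial on $K$. The plan is to reduce to an $L^2$-control of the gradient of this linear polynomial, just as in Lemma~\ref{lemma:iso-stab}, but then to be careful about \emph{which} edge $e$ we sit on, because the naive trace inequality $\snorm{p}_{\frac12,e}\lesssim \gamma^{-1/2}\norm{\nabla p}_{0,K}$ costs a factor $\gamma^{-1/2}$ that blows up on a long edge with short height. So first I would write, for the linear polynomial $p:=\Pi(u-u_I)$, that on any edge $e$ its half-seminorm is controlled by the tangential derivative: $\snorm{p}_{\frac12,e}=\snorm{(u_I-\Pi u)\big|_{\bs a_e}^{\bs b_e}}\,$ — wait, more directly, since $p$ is linear on $e$, $\snorm{p}_{\frac12,e}=\snorm{p(\bs b_e)-p(\bs a_e)}=h_e\snorm{\partial_e p}$, and then $h_e\snorm{\partial_e p}\le h_e\snorm{\nabla p}$. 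So it suffices to show $h_e\snorm{\nabla p}\lesssim h_K|u|_{2,S}$, i.e. $\snorm{\nabla p}\lesssim |u|_{2,S}$ since $h_e\le h_K\eqsim h_S$.

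Next I would bound $\snorm{\nabla p}=|\nabla(\Pi u-\Pi u_I)|$ by relating it to an $L^2$-norm over $K$ and invoking the interpolation estimate already in hand. Since $\nabla(\Pi u-\Pi u_I)$ is a constant vector, $|\nabla(\Pi u-\Pi u_I)|=|K|^{-1/2}\norm{\nabla(\Pi u-\Pi u_I)}_{0,K}=|K|^{-1/2}\norm{\nabla\Pi(u-u_I)}_{0,K}$. Now Lemma~\ref{lemma:aniso-interp} gives $\norm{\nabla\Pi(u-u_I)}_{0,K}\lesssim h_K|u|_{2,S}$, but that alone yields $\snorm{\nabla p}\lesssim h_K|K|^{-1/2}|u|_{2,S}$, and $h_K|K|^{-1/2}\gg1$ for an anisotropic $K$ — this is precisely the obstacle. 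The fix is that the proof of Lemma~\ref{lemma:aniso-interp} actually establishes the sharper bilinear bound $\norm{\nabla\Pi(u-u_I)}_{0,K}^2\lesssim h_K|u|_{2,S}\,\norm{\nabla q}_{0,K}$ with $q=\Pi(u-u_I)$ \emph{before} cancelling; but re-reading that proof, the boundary integrals $(u-u_I,\nabla q\cdot\bs n)_e$ were each bounded by (something)$\times|K|^{1/2}\snorm{\nabla q}=$(something)$\times\norm{\nabla q}_{0,K}$, so in fact the estimates there show $|(u-u_I,\nabla q\cdot\bs n)_e|\lesssim h_K|u|_{2,S}\norm{\nabla q}_{0,K}$ summed over edges, giving $\norm{\nabla q}_{0,K}^2\lesssim h_K|u|_{2,S}\norm{\nabla q}_{0,K}$, hence $\norm{\nabla q}_{0,K}\lesssim h_K|u|_{2,S}$. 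Dividing by $|K|^{1/2}$ is the wrong move; instead I should directly use the pointwise constant identity $|\nabla q|=\norm{\nabla q}_{0,K}/|K|^{1/2}$ \emph{only} after re-deriving a bound of the form $|\nabla q|\lesssim |u|_{2,S}$ directly.

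So the cleanest route, and the one I would actually carry out: revisit the estimates $(\natural)$, $(\flat)$ in the proof of Lemma~\ref{lemma:aniso-interp} — each was shown to be $\lesssim |K|^{1/2}|u|_{2,S}|\nabla q|$ — and sum them with the isotropic-edge contributions (which by Lemma~\ref{lemma:iso-interp}'s argument are $\lesssim |K|^{1/2}|u|_{2,S}|\nabla q|$ as well, since $h_e^2/|K|\lesssim\gamma^{-1}$ there but here $e_2,e_4$ have $l_e\eqsim h_e$). Thus $\norm{\nabla q}_{0,K}^2=\sum_e(u-u_I,\nabla q\cdot\bs n)_e\lesssim |K|^{1/2}|u|_{2,S}|\nabla q|=|u|_{2,S}\norm{\nabla q}_{0,K}$, which gives the \emph{key} sharpened estimate $|\nabla\Pi(u-u_I)|=\snorm{\nabla q}\lesssim |u|_{2,S}$ (no $h_K/|K|^{1/2}$ factor). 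Then $\snorm{\Pi(u-u_I)}_{\frac12,e}=h_e\snorm{\partial_e q}\le h_e\snorm{\nabla q}\le h_K\snorm{\nabla q}\lesssim h_K|u|_{2,S}$, which is~\eqref{eq:PiuIedge}. I expect the main obstacle to be bookkeeping: making sure every one of the edge contributions ($e_1,e_3$ via $(\flat),(\natural)$ and $e_2,e_4$ via the isotropic argument, plus the pentagon/triangle degenerate cases handled by Lemma~\ref{lemma:SKR}) really does factor as $|K|^{1/2}|u|_{2,S}\snorm{\nabla q}$ rather than hiding an extra $h_K/|K|^{1/2}$, and that the number of edges is bounded by~\hyperref[asp:A]{\bf A1} so the sum is harmless.
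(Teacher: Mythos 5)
Your opening reduction is where the argument breaks. Writing $\snorm{\Pi(u-u_I)}_{\frac12,e}=h_e\snorm{\partial_e p}\le h_e\snorm{\nabla p}$ with $p=\Pi(u-u_I)$ discards exactly the directional information that the lemma needs on the \emph{long} edges. The paper's proof keeps the tangential direction: it writes $\Pi(u-u_I)\big|_{\bs a_e}^{\bs b_e}=\frac{h_e}{|K|}\sum_{e'\subset\partial K}\int_{e'}(u-u_I)\,\bs t_e\cdot\bs n_{e'}\,\dd s$, and when $e=e_1$ is a long edge the factors $\bs t_{e_1}\cdot\bs n_{e_1}=0$ and $\bs t_{e_1}\cdot\bs n_{e_3}=\sin\theta$ annihilate or shrink the contributions of the two long edges, leaving only the harmless short edges $e_2,e_4$. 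Once you replace $\snorm{\partial_e p}$ by $\snorm{\nabla p}$ this mechanism is gone and cannot be recovered.

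The proposed rescue --- the ``sharpened estimate'' $\snorm{\nabla\Pi(u-u_I)}\lesssim |u|_{2,S}$ --- is not established and is not what the estimates of Lemma \ref{lemma:aniso-interp} give. There, the term $(\flat)$ is bounded by $h\,|K|^{1/2}\snorm{\nabla q}\,|u|_{2,S}=h_K|u|_{2,S}\norm{\nabla q}_{0,K}$: the integral of $(\dagger)$ over $e_1$ genuinely carries the full length $h$ of the long edge, so your claimed factorization $|K|^{1/2}|u|_{2,S}\snorm{\nabla q}$ drops an $h_K$ that is really there. Summing therefore yields only $\norm{\nabla q}_{0,K}\lesssim h_K|u|_{2,S}$, hence $\snorm{\nabla q}=\norm{\nabla q}_{0,K}|K|^{-1/2}\lesssim h_K|K|^{-1/2}|u|_{2,S}$ (your final step also silently identifies the constant $\snorm{\nabla q}$ with the norm $\norm{\nabla q}_{0,K}$, which differ by the factor $|K|^{1/2}$). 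On a long edge with $h_e\eqsim h_K$ this gives $h_e\snorm{\nabla q}\lesssim h_K^2|K|^{-1/2}|u|_{2,S}$, which blows up relative to the target $h_K|u|_{2,S}$ as the aspect ratio degenerates; indeed the component of $\overline{\nabla(u-u_I)}^K$ normal to the long edges can be of size $h_K|K|^{-1/2}|u|_{2,S}$, so no amount of bookkeeping will produce $\snorm{\nabla q}\lesssim|u|_{2,S}$. Your route does work for the short edges (where $h_e\lesssim|K|^{1/2}$, essentially the paper's Case 1), but the long-edge case --- the heart of the lemma --- requires the $\bs t_e\cdot\bs n_{e'}$ cancelation you have thrown away.
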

\begin{proof}
First $\big|{\Pi  (u  - u_I) }\big|_{\frac12,e} = \left| \Pi  (u  - u_I) 
|_{\bm{a}_e}^{\bm{b}_e} \right|$. Since $\nabla \Pi  u = \overline{\nabla u}^K$ 
and $\Pi  u \in \mathbb P_1(K)$, $\Pi  u = (\bs x - \bm{c}_1)\cdot 
\overline{\nabla u}^K + c_0$. By taking the difference at two end points of edge 
$e$, the following representation holds with $\bs t_e$ being the unit tangential 
vector of edge $e$ pointing from $\bm{a}_e$ to $\bm{b}_e$:
\begin{align}
\label{eq:piu-uIhalf}
\Pi  (u -  u_I) \Big|_{\bm{a}_e}^{\bm{b}_e} = h_e\bs t_e 
\cdot \frac{1}{|K|}\int_K \nabla (u-u_I) = \frac{h_e}{|K|} 
\underbrace{\sum_{e'\subset \partial 
K}\int_{e'} (u-u_I) \bs t_e\cdot \bs n_{e'} \dd s}_{(\sharp)}.
\end{align}

{\emph{Case} 1.} Taking the notations in Figure \ref{fig:anielem}(a), we 
first consider a short edge $e = e_2$ or $e_4$. Then $\bs t_e \cdot \bs n_{e'}= 0 $ 
for $e' = e_2$ or $e_4$. Set $q = -y$ and notice that $\norm{\nabla q}_{0,K} 
=|K|^{1/2}$. Now $(\sharp)$ 
bears the same form with~\eqref{eq:u-uIdq}:
\begin{equation}
(\sharp) = (u - u_I, \nabla q \cdot \bs{n})_{e_1} + 
(u - u_I, \nabla q \cdot \bs{n})_{e_3},
\end{equation}
The representation~\eqref{eq:piu-uIhalf} of 
$\big|{\Pi  (u  - u_I) }\big|_{\frac12,e} $ can be then handled by the established 
estimates in~\eqref{eq:int-natural} and~\eqref{eq:int-flat} which imply 
$|(\sharp)|\lesssim  h_K |u|_{2,S} \norm{\nabla q}_{0,K}$: 
\begin{equation}
\label{eq:piu-uIe} 
\big|{\Pi  (u  - u_I) }\big|_{\frac12,e}= \frac{h_e}{|K|} |(\sharp)|\lesssim 
\frac{h_e\,h_K}{|K|^{1/2}} |u|_{2,S} \lesssim h_K|u|_{2,S}.
\end{equation}

{\emph{Case} 2.}
If $e = e_1$ is a long edge (and $e_3$ can be treated similarly), then $(\sharp)$ in 
\eqref{eq:piu-uIhalf} is:
\begin{equation}
\label{eq:piu-uIe1}
(\sharp) = (u - u_I, \nabla q \cdot \bs{n})_{e_2\cup e_4} 
+ \int_{e_3} (u - u_I)\sin\theta\dd s,
\end{equation}
where $\bm t_{e_1} = \nabla q$, and $q=x$ with $\norm{\nabla q}_{0,K} =|K|^{1/2}$. 
The first integral in~\eqref{eq:piu-uIe1} is on two short edges, where the isotropic 
estimate~\eqref{eq:intp-1} can be applied: 
\begin{equation} 
\label{eq:piu-uIe2e4}
(u - u_I, \nabla q \cdot \bs{n})_{e_2\cup e_4} \lesssim 
h_{e_4} |u|_{2,K} \norm{\nabla q}_{0,K} = h_{e_4} |K|^{1/2} |u|_{2,K}.
\end{equation}
The second integral in~\eqref{eq:piu-uIe1} can be estimated using 
that small $\sin\theta$ factor:
\begin{equation}
\int_{e_3} (u - u_I)\sin\theta\dd s \leq h_{e_3}^{1/2}\sin\theta \norm{u-u_I}_{0,e_3}
\lesssim h_{e_3}^{2}\sin\theta\, |u|_{\frac{3}{2},e_3}.
\end{equation}
For $|u|_{\frac{3}{2},e_3}$, applying the trace 
inequality in~\eqref{lemma:tr-a} for $\nabla u$ on $e_3$ toward $S$, 
meanwhile combining the first identity in~\eqref{eq:piu-uIe}, 
\eqref{eq:piu-uIe2e4}, and $h_{e_3}\sin\theta = l_2-l_1$, yields
\begin{eqnarray}
\snorm{\Pi  (u  - u_I) }_{\frac12,e_1}\lesssim \frac{h_{e_1}}{|K|} 
\left(h_{e_3}(l_2-l_1) + h_{e_4} |K|^{1/2}\right) 
|u|_{2,S}\lesssim h_K |u|_{2,S}.
\end{eqnarray}
\end{proof}

\begin{remark}\rm 
The broken $1/2$-seminorm of a linear polynomial is equivalent to the difference of 
it on the two end points of an edge, which further gives a tangential vector of the 
edge. Then its inner product with the normal vectors leads to scales in different 
directions. If $L^2$- or $\ell^2$-type stabilization is used, which involves the sum 
not the difference of function values at two end points, such cancelation is not 
possible on anisotropic elements.
$\Box$
\end{remark}

\subsection{Convergence results}
We summarize the following convergence result. 
\begin{theorem}[an \textit{a priori} convergence result for VEM on a special class 
of anisotropic meshes]
\label{theorem:aniso}
Assume that $\forall K\in \mathcal{T}_h$ either satisfies \hyperref[asp:A]{\bf 
A1--A2--A3}, or is cut from a square of size $h$ by one straight line. Under the 
same setting with Theorem \ref{theorem:iso} 
for $u$ and $u_h$, it holds that:
\begin{equation}\label{eq:anisoconvergence} 
\tnorm{u-u_h} \lesssim  h \,| u |_{2,\Omega}.
\end{equation}
\end{theorem}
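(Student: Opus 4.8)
The plan is to combine the abstract error bound in Corollary~\ref{corollary:errbound} with the element-by-element estimates established in this section and in Section~\ref{sec:iso}, and then close the argument with a triangle inequality and Young's inequality exactly as in the proof of Theorem~\ref{theorem:iso}. First I would invoke Stein's extension theorem to extend the weak solution $u\in H^2(\Omega)\cap H^1_0(\Omega)$ to $Eu\in H^2(\mathbb{R}^2)$ with $|Eu|_{2,\mathbb{R}^2}\lesssim |u|_{2,\Omega}$; this guarantees $Eu\in H^2(S)$ for every square $S$ from which an anisotropic $K$ is cut, and $Eu\in H^2(\mathrm{conv}(K))$ for every isotropic $K$, so all the local lemmas are applicable with a right-hand side measured on $S$ (resp.\ $\mathrm{conv}(K)$). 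From here on I write $u$ for $Eu$.

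Next I would estimate $\tnorm{u_I-u_h}^2$ via \eqref{eq:err-apriori}. The three local contributions are handled as follows: for an isotropic $K$ (satisfying \hyperref[asp:A]{\bf A1--A2--A3}) I use Lemmas~\ref{lemma:iso-interp}, \ref{lemma:isopr-nd}, and~\ref{lemma:iso-stab} precisely as in Theorem~\ref{theorem:iso}; for a $K$ cut from a square of size $h$ I use the anisotropic counterparts, namely Lemma~\ref{lemma:aniso-interp} for $\norm{\nabla\Pi(u-u_I)}_{0,K}$, Lemma~\ref{lemma:aniso-projnd} together with the bound $n_{\mathcal{E}_K}\leq 5$ (Lemma~\ref{lemma:SKR}) for the term $n_{\mathcal{E}_K}^2\sum_{e}h_e\norm{\nabla(u-\Pi u)\cdot\bs n}_{0,e}^2$, and \eqref{eq:uiPiuedge} combined with Lemma~\ref{lemma:aniso-Piu-uIedge} for $|u_I-\Pi u_I|_{\frac12,\mathcal{E}_K}^2$; recall that when $K$ is a triangle cut from the square the stabilization terms vanish since $(\operatorname{I}-\Pi)V_K=\varnothing$, and when $K$ is a pentagon it is isotropic by Lemma~\ref{lemma:SKR}. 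In every case each local term is bounded by $h_K^2|u|_{2,D_K}^2$ where $D_K$ is either $\mathrm{conv}(K)$ or the enclosing square $S$, both of diameter $\eqsim h_K$. Summing over $K\in\mathcal{T}_h$, the overlaps $D_K\cap D_{K'}$ are counted a uniformly bounded number of times --- this uses \hyperref[asp:A3]{\bf A3} for the convex-hull overlaps and the bounded-overlap property of the squares in the background grid for the cut elements --- so $\sum_K h_K^2|u|_{2,D_K}^2\lesssim h^2|u|_{2,\Omega}^2$, giving $\tnorm{u_I-u_h}\lesssim h|u|_{2,\Omega}$.

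Finally I would bound $\tnorm{u-u_I}$ directly from the definition \eqref{def:norm-t}:
\begin{equation*}
\tnorm{u-u_I}^2 \lesssim \sum_{K\in\mathcal{T}_h}\left(\norm{\nabla\Pi(u-u_I)}_{0,K}^2 + |u_I-\Pi u_I|_{\frac12,\mathcal{E}_K}^2 + |u-\Pi u|_{\frac12,\mathcal{E}_K}^2\right),
\end{equation*}
where the first two terms have already been controlled above, and the last term is handled by \eqref{eq:u-Piuhalf} on isotropic elements (via Lemma~\ref{lemma:tr-a} and the projection estimate \eqref{eq:piH1}) and, on cut elements, by the same trace-toward-$S$ (or toward the neighboring isotropic piece $R$) argument used for \eqref{eq:u-piu-est} together with the Poincar\'e inequality Lemma~\ref{lemma:poincare-aniso} applied to $\norm{\nabla(u-\Pi u)}_{0,S}$; in both situations this term is $\lesssim h_K^2|u|_{2,D_K}^2$, and summation gives $\tnorm{u-u_I}\lesssim h|u|_{2,\Omega}$. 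The triangle inequality $\tnorm{u-u_h}\leq\tnorm{u-u_I}+\tnorm{u_I-u_h}$ then yields \eqref{eq:anisoconvergence}.

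I expect the only genuine subtlety to be bookkeeping rather than mathematics: verifying that all the local right-hand sides can be consolidated onto a finite-overlap family of shape regular domains (convex hulls for isotropic elements, background squares for cut elements) so that the global sum telescopes to $h^2|u|_{2,\Omega}^2$ with a constant depending only on the uniform geometric bounds $C_1$, $\gamma$, $C_3$ --- in particular one must check that a cut element $K$ and its enclosing square $S$ do not create uncontrolled overlap with the convex hulls of isotropic neighbors, which follows because $S$ has diameter $\eqsim h$ and the background grid has bounded overlap. The hardest single ingredient, Lemma~\ref{lemma:aniso-Piu-uIedge}, has already been proved, so the remaining work here is assembly.
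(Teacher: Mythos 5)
Your proposal is correct and follows essentially the same route as the paper: the paper's proof likewise reduces to the assembly of Theorem~\ref{theorem:iso} with $C_{\omega}=2$, $C_{\mathcal{E}}=5$, substituting estimate~\eqref{eq:uiPiuedge} and Lemmas~\ref{lemma:aniso-interp}, \ref{lemma:aniso-projnd}, \ref{lemma:aniso-Piu-uIedge} on cut elements, and handling $|u-\Pi u|_{\frac12,\mathcal{E}_K}$ by lifting the trace toward the square $S$ and invoking the refined Poincar\'e inequality of Lemma~\ref{lemma:poincare-aniso} on $\norm{\nabla(u-\Pi u)}_{0,S}$, exactly as you describe.
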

\begin{proof}

The proof, which instead uses estimate~\eqref{eq:uiPiuedge}, Lemmas 
\ref{lemma:aniso-interp}, \ref{lemma:aniso-projnd}, and 
\ref{lemma:aniso-Piu-uIedge}, 
is almost identical to that of Theorem \ref{theorem:iso} with $C_{\omega} = 2$, 
$C_{\mathcal{E}} = 5$. Except that when using 
the trace inequalities to prove the estimate~\eqref{eq:u-Piuhalf}, the trace is 
lifted toward the square $S$ not $K$, and the refined Poincar\'e inequality in 
Lemma \ref{lemma:poincare-aniso} is applied on $\norm{\nabla(u - \Pi u)}_{0,S}$. 
\end{proof}
\begin{remark}[general shape regular background meshes]
\label{rm:shaperegularmesh}\rm 
The present approach based on cutting could be possibly extended to 
other shape regular background meshes, which might be more suitable for  domain 
$\Omega$ with complex boundary. A straight line will cut a shape regular triangle 
into a triangle satisfying the maximum angle condition and a possible anisotropic 
trapezoid, which can be mapped to the one in Figure \ref{fig:anielem}(a) with a 
bounded Jacobian. $\Box$
\end{remark}
\begin{remark}[a simple polygonal mesh generator]\rm
The convergence result~\eqref{eq:anisoconvergence} justifies 
a simple mesh generator for a 2-D domain $\Omega$. First a uniform partition 
$\mathcal{T}_h$ is used to enclose $\Omega$, then near $\partial \Omega$, one shall 
find the cut and keep the polygons inside 
$\Omega$ in $\mathcal{T}_h$. VEM based on 
this mesh delivers an optimal first order approximation if $\partial \Omega$ is 
well-resolved by $\mathcal{T}_h$. Our analysis offers a theoretical justification of 
VEM convergence on partial-conforming polygonal mesh cut from a regular 
triangulation~\cite{Benedetto;Berrone;Pieraccini;Scialo:2014virtual}, as well as an 
alternative perspective on methods like fictitious domain FEM (see 
\cite{Burman;Hansbo:2010Fictitious} and the references therein).
$\Box$
\end{remark}

\begin{remark}[restriction of the current approach]\label{rm:restriction}\rm
 Our anisotropic error analysis is limited to the cut of a shape regular mesh by one 
 straight line. If a square is cut into several thin slabs the aspect 
ratio of the obtained elements impacts the estimate due to the repeated use 
of the trace lifting to the full square. While such a case can be dealt with in the 
traditional finite element analysis, the reason is that it suffices to prove only 
the anisotropic interpolation error estimate, and unlike in VEM, no stabilization 
terms on the edges are needed.
\end{remark}
\section{Conclusion and Future Work}
In this paper, we have introduced a new set of 
localized geometric assumptions for each edge of an element and established a new 
mechanism to show the convergence of the lowest order (linear) VEM under a weaker 
discrete norm. The new analysis enabled us to handle short edge naturally, and 
explained the robustness of 
VEM on certain anisotropic element cut from a square element. 

Extending the analysis to three-dimensional (3-D) \cite{Cao;Chen;Lin:2018VEM} and 
nonconforming VEM \cite{Cao;Chen:2018AnisotropicNC} is our ongoing 
study. In 3-D, \cite{Beiraodaveiga2017high} already has shown some 
promising numerics, yet on each face of the element, the broken $1/2$-seminorm no 
longer enjoys a simple formula as the one in~\eqref{eq:difference-half}. How to find 
an easily computable alternative stabilization is not a 
trivial task, providing the error estimate can be theoretically proved immune to 
unfavorable scaling due to anisotropy. For nonconforming VEM, since the 
interpolation is no longer a polynomial any more on boundary of each element by the 
standard construction in~\cite{Ayuso2015nc}, the estimates in section \ref{sec:iso} 
and \ref{sec:aniso} needs to be modified.

The mesh conditions and error analysis on isotropic elements in this paper can be 
easily generalized to 3-D but not the anisotropic error analysis. Re-examination of 
the anisotropic error analysis in 3-D and for other elements on 
simplices~\cite{AlShenk1994,Duran.R1999,Krizek1991semiregular,Krizek1992max} is 
needed.

\section*{Appendix A: Trace inequalities}
\label{appendix}
When using a trace inequality, one should be extremely careful as the constant  
depends on the shape of the domain. In this appendix, we shall re-examine 
several trace inequalities with more explicit analyses on the geometric conditions. 

\begin{lemma}[a trace inequality on the reference triangle; Chapter 2 Lemma 5.2 in 
\cite{Necas1967}]
\label{lemma:tr-r}
Let $\widehat{T}$ be the triangle with vertices $(0,0)$, $(1,0)$, and $(1,1)$, and 
$e$ be the edge from $(0,0)$ to $(1,1)$, then for any $v\in H^1(\widehat{T})$, it 
holds that:
\begin{equation}
|v|_{\frac12,e}  \leq  2 \norm{\nabla v}_{0,\widehat{T}}.
\end{equation}
\end{lemma}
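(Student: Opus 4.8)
The plan is to reduce, by density of $C^\infty(\overline{\widehat T})$ in $H^1(\widehat T)$, to a smooth $v$, and then work with the one–dimensional trace $g(t):=v(t,t)$ obtained by parametrizing $e$ via $t\mapsto(t,t)$, $t\in[0,1]$. First I would record the harmless but useful simplification that the arclength element on $e$ equals $\sqrt2\,\dd t$ while $|(t,t)-(s,s)|=\sqrt2\,|t-s|$, so the two factors of $\sqrt2$ cancel the denominator's exponent and
\begin{equation*}
|v|_{\frac12,e}^2=\int_0^1\!\int_0^1\frac{|g(t)-g(s)|^2}{|t-s|^2}\,\dd t\,\dd s .
\end{equation*}
It then remains to bound the right–hand side by $4\,\norm{\nabla v}_{0,\widehat T}^2$.

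For $0\le s<t\le 1$ I would express $g(t)-g(s)$ as a line integral of $\nabla v$ over $\widehat T$: because $\widehat T$ is convex, for any point $P=P(s,t)\in\widehat T$ the segments $[P,(t,t)]$ and $[P,(s,s)]$ lie in $\widehat T$, and
\begin{equation*}
g(t)-g(s)=\int_0^1\nabla v\bigl((1-r)P+r(t,t)\bigr)\cdot\bigl((t,t)-P\bigr)\,\dd r-\int_0^1\nabla v\bigl((1-r)P+r(s,s)\bigr)\cdot\bigl((s,s)-P\bigr)\,\dd r .
\end{equation*}
The natural choice of $P$ is the midpoint of $\bigl[(s,s),(t,t)\bigr]$ pushed into the interior of $\widehat T$ by a distance comparable to $t-s$; a short check — cleanest after a $45^\circ$ rotation sending $e$ to a horizontal segment and $\widehat T$ to an isosceles triangle seated on $e$ — shows such a $P$ remains in $\widehat T$ for \emph{every} admissible pair $(s,t)$, including those near the two endpoints of $e$ where $\widehat T$ is thin. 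One then squares, applies Cauchy--Schwarz in $r$, integrates over $\{s<t\}$, doubles by the $s\leftrightarrow t$ symmetry of the integrand, and changes variables so that the right–hand side becomes $\int_{\widehat T}|\nabla v|^2$ weighted by an explicit factor.

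The step I expect to be the real obstacle is that this single–point representation produces a weight behaving like $\log\bigl(\operatorname{dist}(\cdot,e)^{-1}\bigr)$, which is unbounded near $e$; an $H^1$ function whose gradient concentrates near $e$ shows the resulting bound can be infinite, so the naive path estimate does not close. I would remedy this in one of two standard ways. (i) \emph{Dyadic refinement}: compare $g(t)$ and $g(s)$ not to $v(P)$ but to the averages of $v$ over half–discs (or sub–triangles) of radii $2^{-k}|t-s|$, $k\ge0$, seated on $e$, estimate consecutive differences of these averages by $\nabla v$ on dyadic annuli, and sum the resulting geometric series in $k$, so that the would–be logarithm collapses. (ii) \emph{Reflection}: extend $v$ by even reflection across $e$ to $\widetilde v\in H^1(Q)$ on the unit square $Q=[0,1]^2$, for which $\norm{\nabla\widetilde v}_{0,Q}^2=2\,\norm{\nabla v}_{0,\widehat T}^2$ and $\widetilde v|_e=g$, rotate so that $e$ is an interior horizontal chord of $Q$, and invoke the one–dimensional trace estimate on the resulting horizontal strip, handling the two $45^\circ$ corners at the ends of $e$ by a cutoff. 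The remaining work is purely quantitative: tracking constants through the Cauchy--Schwarz steps, the symmetry doubling, and the change of variables so that the final constant is no larger than $2$ — which is sharp, since equality holds for $v(x,y)=x+y$.
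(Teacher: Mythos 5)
The paper itself offers no proof of this lemma --- it is quoted directly from \cite{Necas1967} --- so the only question is whether your argument stands on its own, and it does not quite close. Your reduction to the one--dimensional Gagliardo integral for $g(t)=v(t,t)$ is correct (the two factors of $\sqrt2$ do cancel exactly), and your sharpness observation is also correct: for $v(x,y)=x+y$ one gets $|v|_{\frac12,e}=2$ while $\norm{\nabla v}_{0,\widehat{T}}=1$, so equality holds. You also correctly diagnose that the single-point path representation produces a weight of order $\log\bigl(1/\operatorname{dist}(\cdot,e)\bigr)$ and therefore cannot bound the seminorm by $\norm{\nabla v}_{0,\widehat{T}}$ alone (the same failure occurs for the seemingly natural two-segment path through the corner $(t,s)$). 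The gap is everything after that diagnosis.

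Both proposed remedies are left unexecuted, and --- more seriously --- your own sharpness computation shows that, as described, they cannot deliver the stated constant. Since equality holds for $v=x+y$, the constant $2$ has zero slack: every inequality in the chain must be simultaneously sharp for that extremal function. The dyadic-averaging route uses $(a+b)^2\le 2(a^2+b^2)$ at least once, a Schur-type bound, and the summation of a geometric series, each of which is strictly lossy, so it can only yield $|v|_{\frac12,e}\le C\norm{\nabla v}_{0,\widehat{T}}$ for some $C>2$. The reflection route has correct bookkeeping ($\norm{\nabla\widetilde v}_{0,Q}^2=2\norm{\nabla v}_{0,\widehat{T}}^2$ for the even reflection across $e$), but it merely transfers the difficulty: you now need the interior-chord trace inequality $|\widetilde v|_{\frac12,e}^2\le 2\norm{\nabla\widetilde v}_{0,Q}^2$ on the square with that exact constant (again attained by $x+y$), plus a cutoff near the two degenerate corners that is itself lossy; neither is proved. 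In short, your argument plausibly establishes $|v|_{\frac12,e}\lesssim\norm{\nabla v}_{0,\widehat{T}}$ with an unspecified absolute constant --- which is in fact all the paper ever uses downstream, since every application of Lemma \ref{lemma:tr-r} is through a $\lesssim$ --- but it does not prove the inequality with the constant $2$ asserted in the statement; for that you must either carry out a genuinely sharp argument or simply cite the reference as the paper does.
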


When using the scaling argument from the reference triangle above, it depends on the 
Jacobi matrix which in turn depends on the ratio of $l_e$ and $h_e$.

\begin{lemma}[a trace inequality of an extension type for an edge in a polygon]
\label{lemma:tr-a}
If on a polygon $K$, an edge $e\subset\partial K$ has the height $l_e$ 
defined in~\eqref{eq:local-le}, and $T_e$ is the triangle with base $e$ and height 
$l_e$, then for any $v\in H^1(K)$, it holds that:
\begin{equation}\label{eq:trlh}
|v|_{\frac12,e} \lesssim \left( \frac{h_e}{l_e} + 
\frac{l_e}{h_e}\right)^{1/2} \norm{\nabla v }_{0,K}.
\end{equation}
Furthermore, if the edge $e$ satisfies \hyperref[asp:A]{\bf A2} with constant 
$0<\gamma\leq 1$:
\begin{equation}\label{eq:tr-1/2}
|v|_{\frac12,e} \lesssim \gamma^{-1/2}\, \norm{\nabla v }_{0,K}.
\end{equation}
\end{lemma}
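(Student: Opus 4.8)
The plan is to reduce \eqref{eq:trlh} to the reference-triangle estimate in Lemma~\ref{lemma:tr-r} by an explicit affine change of variables, keeping track of how the Jacobian scales with $h_e$ and $l_e$. First I would set up a local Cartesian frame so that $e$ runs along the $x$-axis from the origin to $(h_e,0)$ and the inward normal is the positive $y$-direction; by the definition of $l_e$ in \eqref{eq:local-le}, the triangle $T_e$ with base $e$ and height $l_e$ satisfies $T_e\subset K$, and because the rectangle $e\times(0,\delta_e]$ is used in that definition, the foot of the apex of $T_e$ lies over the segment $e$, so the two base angles of $T_e$ are at most $\pi/2$. Composing a shear in $x$ with anisotropic scalings $x\mapsto x/h_e$, $y\mapsto y/l_e$, I can map $T_e$ onto the reference triangle $\widehat T$ of Lemma~\ref{lemma:tr-r} with the edge $e$ mapped onto the hypotenuse $\widehat e$ (up to the harmless reflection/relabelling of which base angle sits at the origin). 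The affine map $F$ has linear part $B$ with $\det B \eqsim h_e l_e$ and $\|B\|,\|B^{-1}\|$ controlled by $\max\{h_e/l_e, l_e/h_e\}$ and its reciprocal, since the base angles being bounded by $\pi/2$ prevents any additional degeneration of the shear.

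Next I would push $v$ forward: set $\widehat v := v\circ F$ on $\widehat T$. Two scaling computations are needed. For the seminorm on the edge, $|v|_{\frac12,e}^2 = \int_e\int_e \frac{|v(\bm x)-v(\bm y)|^2}{|\bm x-\bm y|^2}\,ds(\bm x)\,ds(\bm y)$ transforms under a one-dimensional affine map of $e$ onto $\widehat e$: the denominator $|\bm x-\bm y|^2$ and the two arclength elements scale so that the length factor cancels, making $|\cdot|_{\frac12}$ scale-invariant along the edge up to a constant depending only on the ratio of the edge lengths, which here is a fixed number; so $|v|_{\frac12,e} \eqsim |\widehat v|_{\frac12,\widehat e}$. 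For the Dirichlet energy, the standard chain-rule bound gives $\|\widehat\nabla \widehat v\|_{0,\widehat T} \lesssim \|B^{-1}\|\,|\det B|^{1/2}\,\|\nabla v\|_{0,T_e}$, and $\|B^{-1}\|\,|\det B|^{1/2} \lesssim \big(\tfrac{h_e}{l_e}+\tfrac{l_e}{h_e}\big)^{1/2}$ from the bounds on $B$. Applying Lemma~\ref{lemma:tr-r} on $\widehat T$ and using $T_e\subset K$ to replace $\|\nabla v\|_{0,T_e}$ by $\|\nabla v\|_{0,K}$ then yields \eqref{eq:trlh}.

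Finally, for \eqref{eq:tr-1/2}, Assumption \hyperref[asp:A]{\bf A2} with $0<\gamma\le 1$ gives $l_e\ge\gamma h_e$, hence $h_e/l_e \le \gamma^{-1}$; moreover the rescaling remark following \hyperref[asp:A]{\bf A2} lets us assume $l_e\le h_e$ (replace $T_e$ by a thinner triangle still inside $K$), so $l_e/h_e\le 1\le\gamma^{-1}$, and therefore $\big(\tfrac{h_e}{l_e}+\tfrac{l_e}{h_e}\big)^{1/2}\lesssim \gamma^{-1/2}$, which substituted into \eqref{eq:trlh} gives \eqref{eq:tr-1/2}.

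\textbf{Main obstacle.} The delicate point is the estimate on $\|B^{-1}\|$: a general affine map onto $\widehat T$ could blow up beyond the claimed $\max\{h_e/l_e, l_e/h_e\}^{1/2}$ if the triangle $T_e$ were very skewed, i.e.\ if one base angle were close to $\pi$. The geometric constraint that saves us is precisely the one built into the definition \eqref{eq:local-le}–\eqref{eq:local-de} via the rectangle $e\times(0,\delta_e]$, which forces both base angles of $T_e$ to be at most $\pi/2$; I would need to make that angle bound explicit and verify that it controls the shear component of $B$, so that only the anisotropic diagonal scaling contributes to $\|B^{-1}\|$. Handling this carefully — and making sure the one-dimensional transformation of the fractional seminorm along $e$ really is scale-invariant and not merely comparable up to the aspect ratio — is where the proof has to be done with some care rather than by routine scaling.
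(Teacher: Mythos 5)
Your proposal is correct and follows essentially the same route as the paper: map $T_e$ affinely to the reference triangle of Lemma~\ref{lemma:tr-r}, use the exact scale-invariance of the edge $1/2$-seminorm, bound the Jacobian factor by $(h_e/l_e+l_e/h_e)^{1/2}$ using the nonobtuse base angles guaranteed by the rectangle $e\times(0,\delta_e]$ in \eqref{eq:local-le}, and then invoke the rescaling $l_e=\gamma h_e$ for \eqref{eq:tr-1/2}. The obstacle you flag (controlling the shear via the angle bound) is exactly the point the paper handles with the estimate $|\bm{c}|^2+|\bm{b}-\bm{c}|^2\leq h_e^2+2l_e^2$, so no gap remains.
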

\begin{proof}
Let the triangle with $e$ as a base be $T_e$. 
\begin{figure}[!b]
\begin{center}
\includegraphics[width=0.7\textwidth]{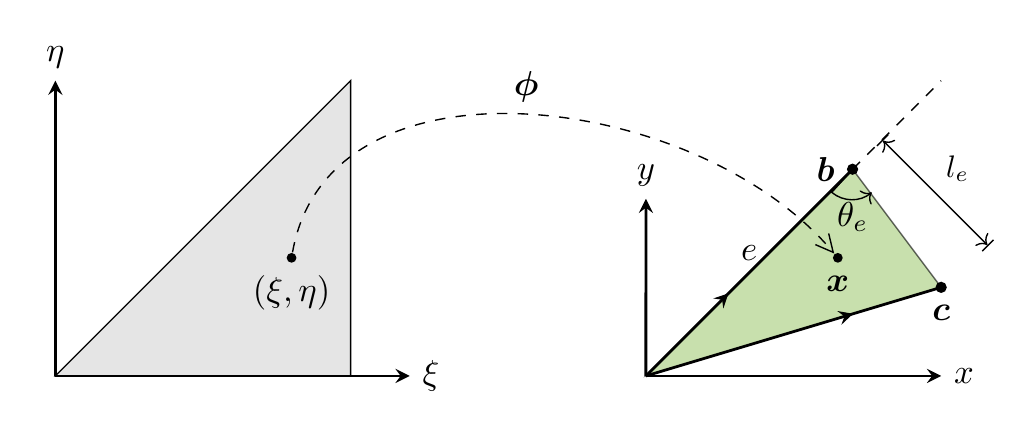}
\caption{Mapping $\phi$ from the reference triangle $\hat T$ to $T_e$.}
\label{fig:ref}
\end{center}
\end{figure}
There 
exists an affine mapping 
$\bm{\phi}: \widehat{T} \to T_e\subset K$, where $\widehat{T}$ is the 
unit triangle in Lemma \ref{lemma:tr-r}. Without loss of generality, it is assumed 
that the edge of interest $e$ aligns with $y=x$ in 
$\mathbb{R}^2$, which shares the same tangential direction as the hypotenuse of 
$\widehat T$. One vertex of the edge $e$ is assumed to be the 
origin (see Figure~\ref{fig:ref}), then:
\begin{equation}
\label{eq:tr-j}
\bm{x} = \bm{\phi}(\xi,\eta) = \xi \bm{c} + \eta (\bm{b} - \bm{c}), \text{ for } 
0\leq \eta \leq \xi \leq 1,
\end{equation}
where $\bm{b}=h_e\langle 1,1 \rangle$ and $\bm{c}$ represent the other two 
vertices. Using the parametrization~\eqref{eq:tr-j}: 
$e\ni \bm{x} = \bm{\phi}(\xi,\xi)= h_e \langle \xi,\xi\rangle$ for $0\leq 
\xi\leq 1$, thus $\dd s(\bm{x}) = \sqrt{2}\,h_e \dd\xi$, and the $1/2$-seminorm defined 
in~\eqref{eq:norm-half} is:
\begin{equation}
|v|_{\frac12,e}^2 = \int_{(0,1)^2} \frac{|v\big(\bm{\phi}(t,t)\big) - 
v\big(\bm{\phi}(\tau,\tau)\big)|^2}{2h_e^2|t-\tau|^{2}} 
\,2h_e^2 \,\dd t \dd \tau = |\widehat{v}|_{\frac12,\hat{e}}.
\end{equation}
By Lemma \ref{lemma:tr-r}, let 
the $ \widehat{\nabla}$ denote the gradient in $(\xi,\eta)$ on $\widehat{T}$,
\[
|v|_{\frac12,e}^2 \leq 4 
\norm{ \widehat{\nabla} v \big(\bm{\phi}(\xi,\eta)\big)}_{0,\hat T}^2 \leq  \; 
4\int_{\hat T} \left(|\nabla v|^2
\left| \partial_{\xi}\bm{\phi} \right|^2
+  |\nabla v|^2 \left| \partial_{\eta}\bm{\phi}\right|^2 \right)\, \dd \eta \dd \xi.
\]
One has $|J_{\bm{\phi}}| = 2|T_e| = h_e \,l_e$ for the mapping~\eqref{eq:tr-j}, and 
$\left| \partial_{\xi}\bm{\phi} \right|^2 = |\bs c|^2$ 
and $\left| \partial_{\eta}\bm{\phi} \right|^2 = |\bs b - \bs c|^2$. Inserting these 
identities into above integral yields:
\begin{equation}
\label{eq:tr-constant}
|v|_{\frac12,e}^2  \leq 4 \frac{|\bm{c}|^2 +  |\bm{b} - \bm{c}|^2}{|J_{\bm{\phi}}|}
\int_{\hat T} |{\nabla v}|^2 |J_{\bm{\phi}}| \, \dd \eta \dd \xi
\lesssim \frac{|\bm{c}|^2 +  |\bm{b} - \bm{c}|^2}{|J_{\bm{\phi}}|} |v|_{1,T_e}^2.
\end{equation}
As the two angles adjacent to $e$ are nonobtuse, $|\bm{c}|^2 
+|\bm{b} - \bm{c}|^2 \leq h_e^2 + 2l_e^2$, and~\eqref{eq:trlh} holds by the 
following estimate:
\[
\left(|\bm{c}|^2 +  |\bm{b} - \bm{c}|^2 \right) |J_{\bm{\phi}}|^{-1} \lesssim 
(h_e^2 + l_e^2)/(h_e l_e).
\]
Now when \hyperref[asp:A]{\bf A2} is met with $0<\gamma\leq 1$, we simply 
let $l_e = \gamma h_e$, and~\eqref{eq:tr-1/2} follows.
\end{proof}

\begin{lemma}[a trace inequality for an edge in a polygon]
\label{lemma:tr-l2}
If on a polygon $K$, an edge $e\subset \partial K$ has the height $l_e$ 
defined in~\eqref{eq:local-le}, and $T_e$ is the triangle with base $e$ and 
height $l_e$, then the following 
trace inequality holds:
\begin{equation}
\label{eq:tr-l2}
\norm{v}_{0,e} \lesssim l_e^{-1/2} \norm{v}_{0,T_e} 
+ \bigl(l_e^{1/2} + l_e^{-1/2}h_e\bigr) \norm{\nabla v}_{0,T_e}.
\end{equation}
Furthermore, if the edge $e$ satisfies \hyperref[asp:A]{\bf A2} with 
constant $0<\gamma\leq 1$:
\begin{equation}\label{eq:tr-h}
\norm{v}_{0,e} \lesssim \gamma^{-1/2}\left( h_e^{-1/2} \norm{v}_{0,K} + 
h_e^{1/2} \norm{\nabla v}_{0,K}\right).
\end{equation}
\end{lemma}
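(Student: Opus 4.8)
The plan is to establish \eqref{eq:tr-l2} by the same affine scaling used for Lemma~\ref{lemma:tr-a}, the only difference being that on the reference triangle $\widehat{T}$ the $1/2$-seminorm trace of Lemma~\ref{lemma:tr-r} is replaced by the ordinary $L^2$ trace inequality $\norm{\widehat{v}}_{0,\widehat{e}}^2 \lesssim \norm{\widehat{v}}_{0,\widehat{T}}^2 + \norm{\widehat{\nabla}\widehat{v}}_{0,\widehat{T}}^2$, i.e.\ the continuity of the trace operator $H^1(\widehat{T})\to L^2(\partial\widehat{T})$ (cf.~\cite[Chapter~2]{Necas1967}). The refined bound \eqref{eq:tr-h} will then be read off from \eqref{eq:tr-l2} using Assumption~\hyperref[asp:A]{\bf A2} together with the height-rescaling remark made right after \hyperref[asp:A]{\bf A2}.

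First I would reuse the affine map $\bm{\phi}\colon\widehat{T}\to T_e\subset K$ of \eqref{eq:tr-j}, under which $e$ is the image of the hypotenuse $\widehat{e}$, $|J_{\bm{\phi}}| = h_e l_e$, and $\partial_\xi\bm{\phi} = \bm{c}$, $\partial_\eta\bm{\phi} = \bm{b}-\bm{c}$; because the two base angles of $T_e$ adjacent to $e$ are nonobtuse, $|\bm{c}|^2 + |\bm{b}-\bm{c}|^2 \lesssim h_e^2 + l_e^2$, exactly as in the derivation of \eqref{eq:tr-constant}. Writing $\widehat{v} := v\circ\bm{\phi}$, the diagonal parametrization of $e$ and $\widehat{e}$ used in the proof of Lemma~\ref{lemma:tr-a} gives $\norm{v}_{0,e}^2 \eqsim h_e\norm{\widehat{v}}_{0,\widehat{e}}^2$; the planar change of variables gives $\norm{\widehat{v}}_{0,\widehat{T}}^2 = |J_{\bm{\phi}}|^{-1}\norm{v}_{0,T_e}^2$; and since $\partial_\xi\widehat{v} = \partial_\xi\bm{\phi}\cdot\nabla v$ and $\partial_\eta\widehat{v} = \partial_\eta\bm{\phi}\cdot\nabla v$, one gets $\norm{\widehat{\nabla}\widehat{v}}_{0,\widehat{T}}^2 \le (|\bm{c}|^2 + |\bm{b}-\bm{c}|^2)\,|J_{\bm{\phi}}|^{-1}\norm{\nabla v}_{0,T_e}^2 \lesssim (h_e^2+l_e^2)(h_e l_e)^{-1}\norm{\nabla v}_{0,T_e}^2$. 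Inserting these three relations into the reference trace inequality yields
\[
\norm{v}_{0,e}^2 \lesssim h_e\Bigl(\tfrac{1}{h_e l_e}\norm{v}_{0,T_e}^2 + \tfrac{h_e^2+l_e^2}{h_e l_e}\norm{\nabla v}_{0,T_e}^2\Bigr) = \tfrac{1}{l_e}\norm{v}_{0,T_e}^2 + \tfrac{h_e^2+l_e^2}{l_e}\norm{\nabla v}_{0,T_e}^2 ,
\]
and taking square roots together with $(h_e^2+l_e^2)^{1/2}\eqsim h_e+l_e$ gives \eqref{eq:tr-l2}. For \eqref{eq:tr-h}, when $e$ satisfies \hyperref[asp:A]{\bf A2} with $0<\gamma\le 1$ I would first replace $T_e$ by a sub-triangle with base $e$ and height $\gamma h_e$, still contained in $K$ by the rescaling argument following \hyperref[asp:A]{\bf A2} (so effectively $l_e = \gamma h_e\le h_e$); then $\norm{v}_{0,T_e}\le\norm{v}_{0,K}$ and $\norm{\nabla v}_{0,T_e}\le\norm{\nabla v}_{0,K}$, and substituting $l_e = \gamma h_e$ into \eqref{eq:tr-l2} converts $l_e^{-1/2}$ into $\gamma^{-1/2}h_e^{-1/2}$ and $l_e^{1/2}+l_e^{-1/2}h_e$ into $(\gamma^{1/2}+\gamma^{-1/2})h_e^{1/2}\eqsim\gamma^{-1/2}h_e^{1/2}$, which is precisely \eqref{eq:tr-h}.

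The step to be careful with is the gradient transfer: the quantity controlling $\norm{\widehat{\nabla}\widehat{v}}_{0,\widehat{T}}$ is $(|\partial_\xi\bm{\phi}|^2 + |\partial_\eta\bm{\phi}|^2)^{1/2}$, not the inverse Jacobian, and the bound $|\partial_\xi\bm{\phi}|^2 + |\partial_\eta\bm{\phi}|^2 \lesssim h_e^2 + l_e^2$ rests entirely on the two base angles of $T_e$ being at most $\pi/2$, which is guaranteed precisely because the definition \eqref{eq:local-le} forces $T_e$ to lie inside the slab $e\times(0,\delta_e]$. Everything else is a routine change of variables, and the short-edge regime $h_e\ll l_e$ is absorbed by the very same rescaling of $l_e$ that is used to pass to \eqref{eq:tr-h}.
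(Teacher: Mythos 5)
Your proof is correct and follows essentially the same route as the paper: a trace inequality on a reference triangle combined with a change of variables whose anisotropy produces the $l_e^{-1/2}$ and $l_e^{-1/2}h_e$ factors, with the nonobtuse base angles of $T_e$ (guaranteed by the slab $e\times(0,\delta_e]$ in \eqref{eq:local-le}) controlling the Jacobian entries, and then the substitution $l_e=\gamma h_e$ together with $T_e\subset K$ giving \eqref{eq:tr-h}. The only cosmetic difference is that the paper passes through an intermediate isotropic triangle of base and height $h_e$ and then squeezes only the $\hat y$-direction, which keeps $\partial_x v$ and $\partial_y v$ separate, whereas you map the unit triangle directly to $T_e$ and bound both by $|\nabla v|$; both yield \eqref{eq:tr-l2}.
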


\begin{proof}
Consider a reference triangle $\widehat{T}_e$ with base $e$, height of length $h_e$, and the two angles adjacent to $e$ being acute. Let the 
base align with the horizontal $\hat{x}$-axis and choose the origin as the 
projection of vertex of $T_e$ opposite to $e$. It is well known that 
(\cite[Theorem 1.5.1.10]{Grisvard1985},~\cite[Appendix Lemma A.3]{Wang2014wgmixed}) 
for any $\widehat{v}\in H^1(\widehat{T}_e)$,
\begin{equation}
\norm{\widehat{v}}_{0,e}^2 \lesssim h_e^{-1} \norm{\widehat{v}}_{0,\widehat{T}_e}^2
+ h_e \bigl\| \widehat{\nabla}\widehat{v} \bigr\|_{0,\widehat{T}_e}^2.
\end{equation}
The mapping $\bm{\psi}: \hat{x}\mapsto x = \hat{x}$, $\hat{y}\mapsto y = l_e 
\hat{y}/h_e$ maps $\widehat{T}_e$ to $T_e$, which has base length $h_e$ and height 
$l_e$. Equation \eqref{eq:tr-l2} follows from a straightforward change of variable 
computation:
\begin{equation}
\norm{v}_{0,e}^2 = \norm{\widehat{v}}_{0,e}^2
\lesssim l_e^{-1} \norm{v}_{0,T_e}^2 
+ h_e^2 l_e^{-1} \norm{\partial_x v}_{0,T_e}^2
+ l_e \norm{\partial_y v}_{0,T_e}^2.
\end{equation} 
\eqref{eq:tr-h} then follows from the same argument for~\eqref{eq:tr-1/2} 
in the previous lemma:
\begin{align*}
\norm{v}_{0,e}&  \lesssim (\gamma h_e)^{-1/2} \norm{v}_{0,T_e} + 
\gamma^{-1/2}h_e^{1/2}  \norm{\nabla v}_{0,T_e}
\\
&\leq  \gamma^{-1/2}\left( h_e^{-1/2} \norm{v}_{0,K} + 
h_e^{1/2} \norm{\nabla v}_{0,K}\right).
\end{align*}
\end{proof}
\begin{remark}\rm 
A short edge with length $h_e\ll h_K$ is allowed provided that it satisfies 
\hyperref[asp:A]{\bf A2}. For a long edge that only supports a triangle 
with a short height, i.e., $l_e \ll h_e\eqsim h_K$, this trace inequality does not 
imply any meaningful estimate and this is one of the main difficulties for the 
anisotropic error estimate.
$\Box$
\end{remark}

\section*{Appendix B: Poincar\'{e}--Friedrichs Inequalities}
\label{appendixB}
In this appendix, we review Poincar\'{e}--Friedrichs inequalities with a constant 
depending only on the diameter of the domain but not on the shape. 

\begin{lemma}[Poincar\'{e} inequality with average zero in a convex domain 
\cite{Payne1960optimal}]
\label{lemma:poincare-convex}
Let $K \subset \mathbb R^d$ be a convex domain with a continuous boundary. For any 
$v\in H^1(K)$,  
\begin{equation}\label{poincare}
\norm{v - \bar{v}^{K}}_{0,K} \leq \frac{h_K}{\pi} \norm{\nabla v}_{0,K}. 
\end{equation} 
\end{lemma}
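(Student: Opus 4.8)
The plan is to reduce the $d$-dimensional estimate to a one-dimensional weighted Poincar\'e inequality over a segment, which is the ``needle decomposition'' strategy of Payne and Weinberger \cite{Payne1960optimal}. After normalizing $\overline{v}^{K}=0$, the goal is $\norm{v}_{0,K}\le (h_K/\pi)\norm{\nabla v}_{0,K}$. The basic step is a measure-preserving bisection: given a convex $K$ and any $v\in H^1(K)$ with $\overline{v}^{K}=0$, there exists an affine hyperplane cutting $K$ into two convex pieces $K_1,K_2$ with $\overline{v}^{K_1}=\overline{v}^{K_2}=0$, found by a continuity argument over the choice of cutting direction and translation. Because the running mean is preserved, the errors decouple,
\begin{equation*}
\norm{v - \overline{v}^{K}}_{0,K}^2 = \norm{v - \overline{v}^{K_1}}_{0,K_1}^2 + \norm{v - \overline{v}^{K_2}}_{0,K_2}^2 ,
\end{equation*}
while $\norm{\nabla v}_{0,K_1}^2 + \norm{\nabla v}_{0,K_2}^2 = \norm{\nabla v}_{0,K}^2$ and each $h_{K_i}\le h_K$.

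Iterating the bisection, arranged so that the pieces shrink to zero in every direction transverse to the long axis of $K$ while their lengths stay bounded by $h_K$, one is led in the limit to a family of ``needles'': segments of length $L\le h_K$ carrying the weight $w(t)$ equal to the $(d-1)$-dimensional measure of the cross-section at parameter $t$. By Brunn--Minkowski, $w^{1/(d-1)}$ is concave, so $w$ is log-concave on $(0,L)$ (in the present two-dimensional case $d-1=1$, so $w$ itself is concave). The crux is then the one-dimensional inequality: for $g\in H^1(0,L)$ with $\int_0^L g\,w\,dt=0$,
\begin{equation*}
\int_0^L g^2\,w\,dt \le \Big(\frac{L}{\pi}\Big)^2 \int_0^L (g')^2\,w\,dt ,
\end{equation*}
whose sharp constant is that of the unweighted interval, where $\pi^2/L^2$ is the first nonzero Neumann eigenvalue of $-u''=\lambda u$ on $(0,L)$ with eigenfunction $\cos(\pi t/L)$; log-concavity of the weight can only raise this eigenvalue, so the constant $L/\pi\le h_K/\pi$ survives. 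Integrating the needle estimates back up over the limiting decomposition yields \eqref{poincare}.

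The step I expect to be the main obstacle is making this geometric reduction rigorous: one must show the bisecting hyperplanes can be chosen so that the pieces genuinely degenerate to segments (controlling their diameters uniformly through the iteration is the point where Payne and Weinberger's original argument needed care), keep track of the fact that the cuts depend on $v$, and justify both the passage to the limit and the eigenvalue comparison for log-concave weights on an interval. Since this is precisely the content of \cite{Payne1960optimal}, in the write-up I would cite it directly; for a more self-contained account, I would record only the one-dimensional inequality above together with the log-concavity of the cross-sectional weight, and invoke the needle decomposition as a known device.
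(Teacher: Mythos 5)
The paper offers no proof of this lemma; it is quoted verbatim from Payne and Weinberger \cite{Payne1960optimal}, and your outline is an accurate sketch of precisely that argument (measure-preserving bisection into needles, reduction to a one-dimensional weighted Poincar\'e inequality whose cross-sectional weight is log-concave by Brunn--Minkowski, and the sharp Neumann eigenvalue $\pi^2/L^2$ on a segment of length $L\le h_K$). Your plan to cite the reference rather than reprove it matches what the paper does, and you correctly identify the one genuinely delicate point --- the rigorous passage to the limiting needle decomposition, which is exactly where the original 1960 argument later required repair.
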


\begin{lemma}[Poincar\'{e} inequality with average zero on a subset] 
\label{lemma:poincare-subset}
Let $S\subset \mathbb{R}^2$ be a convex domain, and a nondegenerate $\omega 
\subset S$, then for any $v\in H^1(S)$,
\begin{equation}
\label{eq:poincare-subset}
\norm{v - \bar{v}^{\omega}}_{0,S} \leq  
\left ( 1+ \frac{|S|^{\frac12}}{|\omega|^{\frac12}}\right) \frac{h_{S}}{\pi} 
\norm{\nabla v}_{0,S}.
\end{equation}
\end{lemma}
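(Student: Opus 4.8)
The plan is to bridge the two averages $\bar{v}^{\omega}$ and $\bar{v}^{S}$ and to reduce everything to the convex Poincar\'e inequality of Lemma~\ref{lemma:poincare-convex} applied on $S$. First I would split by the triangle inequality
\[
\norm{v - \bar{v}^{\omega}}_{0,S} \leq \norm{v - \bar{v}^{S}}_{0,S} + \norm{\bar{v}^{S} - \bar{v}^{\omega}}_{0,S},
\]
observing that $\bar{v}^{S} - \bar{v}^{\omega}$ is a constant, so the second term equals $|S|^{1/2}\,\snorm{\bar{v}^{S} - \bar{v}^{\omega}}$. The first term is immediately bounded by $\frac{h_S}{\pi}\norm{\nabla v}_{0,S}$ via Lemma~\ref{lemma:poincare-convex}, since $S$ is convex.

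For the constant difference I would use that averaging the constant $\bar{v}^{S}$ over $\omega$ leaves it unchanged, so that $\bar{v}^{\omega} - \bar{v}^{S} = |\omega|^{-1}\int_{\omega}(v - \bar{v}^{S})$; here the nondegeneracy of $\omega$ ensures $|\omega| > 0$ so the average is well defined, and $\omega \subset S$ makes the integral of the $H^1(S)$-function legitimate. Cauchy--Schwarz then gives
\[
\snorm{\bar{v}^{\omega} - \bar{v}^{S}} \leq |\omega|^{-1/2}\norm{v - \bar{v}^{S}}_{0,\omega} \leq |\omega|^{-1/2}\norm{v - \bar{v}^{S}}_{0,S},
\]
and a second application of Lemma~\ref{lemma:poincare-convex} bounds $\norm{v - \bar{v}^{S}}_{0,S}$ by $\frac{h_S}{\pi}\norm{\nabla v}_{0,S}$. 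Hence $\norm{\bar{v}^{S} - \bar{v}^{\omega}}_{0,S} \leq (|S|/|\omega|)^{1/2}\,\frac{h_S}{\pi}\norm{\nabla v}_{0,S}$, and adding the two contributions produces exactly the factor $1 + |S|^{1/2}/|\omega|^{1/2}$ in~\eqref{eq:poincare-subset}.

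There is no genuine obstacle here; the only point requiring care is that $\omega$ is neither assumed convex nor connected, so one must route every estimate through $S$ rather than attempting a Poincar\'e inequality directly on $\omega$. Invoking the convexity of $S$ twice is precisely what renders the shape of $\omega$ irrelevant, which is the feature the lemma is designed to provide for the later use with $\omega = K$ and $|K| \ll |S|$ in the anisotropic analysis.
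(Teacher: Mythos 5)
Your proof is correct and follows essentially the same route as the paper: a triangle inequality through $\bar{v}^{S}$, the convex Poincar\'e inequality of Lemma~\ref{lemma:poincare-convex} for the first term, and Cauchy--Schwarz plus the monotonicity $\norm{v-\bar{v}^{S}}_{0,\omega}\leq\norm{v-\bar{v}^{S}}_{0,S}$ for the constant difference. No discrepancies.
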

\begin{proof}
A straightforward argument exploiting~\eqref{poincare} is as follows:
\begin{equation*}
\norm{v - \bar{v}^{\omega}}_{0,S} \leq 
\norm{v - \bar{v}^{S}}_{0,S} + \norm{\bar{v}^{S} - \bar{v}^{\omega}}_{0,S}
\leq \frac{h_S}{\pi} \norm{\nabla v}_{0,S} 
+ \frac{|S|^{\frac12}}{|\omega|^{\frac12}} \norm{v-\bar{v}^{S}}_{0,\omega}.
\end{equation*}
Then the estimate~\eqref{eq:poincare-subset} follows directly from 
$\norm{v-\bar{v}^{S} }_{0,\omega} \leq \norm{v-\bar{v}^{S}}_{0,S}$ and 
\eqref{poincare}.
\end{proof}

\begin{lemma}[Poincar\'{e} inequality with average zero on a subset of boundary]
\label{lemma:poincare-bd}
Let $K\subset \mathbb{R}^2$ be a Lipschitz polygon and $\Gamma\subset \partial K$ be 
a connected subset, then for any $v\in H^1(K)$ 
such that $\overline{v}^{\Gamma} = 0$, the following inequality holds:
\begin{equation}
\label{eq:poincare-bd}
\norm{v}_{0,\Gamma} \leq h_{\Gamma}^{1/2} |v|_{\frac12,\Gamma}.
\end{equation}

\end{lemma}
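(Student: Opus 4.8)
The plan is to prove the bound directly from the definitions, treating $v$ only through its restriction to the curve $\Gamma$. The hypothesis $v\in H^1(K)$ enters solely to guarantee, via the trace theorem on the Lipschitz polygon $K$, that $v|_{\Gamma}$ lies in $H^{1/2}(\Gamma)$ so that the right-hand side is finite; in any case, if $|v|_{\frac12,\Gamma}=+\infty$ the inequality is trivial, so we may assume $|v|_{\frac12,\Gamma}<\infty$.

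First I would exploit the zero-average hypothesis $\overline{v}^{\Gamma}=0$ to write, for almost every $\bm x\in\Gamma$,
\[
v(\bm x)=v(\bm x)-\overline{v}^{\Gamma}=\frac{1}{|\Gamma|}\int_{\Gamma}\bigl(v(\bm x)-v(\bm y)\bigr)\dd s(\bm y),
\]
and then apply the Cauchy--Schwarz inequality in the $\bm y$ variable to obtain $|v(\bm x)|^2\le|\Gamma|^{-1}\int_{\Gamma}|v(\bm x)-v(\bm y)|^2\dd s(\bm y)$. Integrating this over $\bm x\in\Gamma$ and using Tonelli's theorem for the nonnegative integrand gives
\[
\norm{v}_{0,\Gamma}^2\le\frac{1}{|\Gamma|}\int_{\Gamma}\int_{\Gamma}|v(\bm x)-v(\bm y)|^2\dd s(\bm y)\dd s(\bm x).
\]
Next I would introduce the singular weight: since $|\bm x-\bm y|\le\operatorname{diam}(\Gamma)=h_{\Gamma}$ for all $\bm x,\bm y\in\Gamma$, the double integral on the right is at most $h_{\Gamma}^2\int_{\Gamma}\int_{\Gamma}|v(\bm x)-v(\bm y)|^2|\bm x-\bm y|^{-2}\dd s(\bm y)\dd s(\bm x)=h_{\Gamma}^2\,|v|_{\frac12,\Gamma}^2$, whence $\norm{v}_{0,\Gamma}^2\le(h_{\Gamma}^2/|\Gamma|)\,|v|_{\frac12,\Gamma}^2$.

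The last and only genuinely geometric step is to observe that $|\Gamma|\ge h_{\Gamma}$. Because $K$ is a simple polygon, $\partial K$ is a Jordan curve and its connected subset $\Gamma$ is a (possibly closed) rectifiable arc; for any two of its points, the sub-arc joining them lies inside $\Gamma$ and has length at least their Euclidean distance, so $|\Gamma|$ is at least the diameter $h_{\Gamma}$. Consequently $h_{\Gamma}^2/|\Gamma|\le h_{\Gamma}$, and taking square roots yields $\norm{v}_{0,\Gamma}\le h_{\Gamma}^{1/2}|v|_{\frac12,\Gamma}$. I expect the main (mild) obstacle to be precisely this inequality $|\Gamma|\ge h_{\Gamma}$: it relies essentially on the connectedness of $\Gamma$ — the estimate genuinely fails for a disconnected $\Gamma$ — and one must also check that the interchange of integration above is legitimate, which it is once the case $|v|_{\frac12,\Gamma}<\infty$ has been isolated.
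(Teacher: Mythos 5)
Your proof is correct and follows essentially the same route as the paper's: insert the average, apply Cauchy--Schwarz in $\bm y$, multiply and divide by $|\bm x-\bm y|^2$ to recognize the $1/2$-seminorm, and bound $\max_{\bm x,\bm y\in\Gamma}|\bm x-\bm y|^2/|\Gamma|$ by $h_\Gamma$. You are in fact slightly more careful than the paper, which writes the Cauchy--Schwarz step as an equality and leaves the geometric fact $|\Gamma|\ge h_\Gamma$ (where connectedness of $\Gamma$ is genuinely needed) implicit.
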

\begin{proof}
First inserting the $\overline{v}^{\Gamma}$ into the $L^2$-norm yields:
\begin{equation}
\norm{v}_{0,\Gamma}^2 
= \int_{\Gamma}  \Bigg(\frac{1}{|\Gamma|}\int_{\Gamma}  
\big(v(\bm{x})-v(\bm{y})\big) \dd s(\bm{y})\Bigg)^2\dd s(\bm{x}).
\end{equation}
Inserting terms to match the $1/2$-seminorm yields the estimate 
\eqref{eq:poincare-bd}:
\begin{equation}
\norm{v}_{0,\Gamma}^2 = \frac{1}{|\Gamma|} \int_{\Gamma}  \int_{\Gamma} 
\frac{\big\vert v(\bm{x})-v(\bm{y})\big\vert^2}{|\bm{x}-\bm{y}|^2} 
\cdot|\bm{x}-\bm{y}|^2\, \dd s(\bm{y})\dd s(\bm{x})
\leq \frac{\max\limits_{x,y\in \Gamma}|\bm{x}-\bm{y}|^2}{|\Gamma|} 
|v|_{\frac12,\Gamma}^2.
\end{equation}
\phantom{;}
\end{proof}

\begin{lemma}[Poincar\'{e} inequality for continuous and piecewise polynomials 
in\\ $|\cdot|_{\frac12,\mathcal{E}_K}$]
\label{lemma:poincare-brokenhalf} 
Let $K\subset \mathbb{R}^2$ be a Lipschitz polygon 
and $\Gamma\subset \partial K$ be a connected subset; then for any $v\in 
B_p(\partial K)\subset C^0(\partial K)$ such that $\overline{v}^{\Gamma} = 0$, the 
following inequality holds:
\begin{equation}\label{eq:poincare-brokenhalf}
\norm{v}_{0,e} \lesssim n_{\mathcal{E}_K}^{1/2}  h_{e}^{1/2} 
|v|_{\frac12,\mathcal{E}_K}.
\end{equation}
\end{lemma}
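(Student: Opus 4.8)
The plan is to reduce the estimate on a single edge $e\subset\partial K$ to the global $1/2$-seminorm $|v|_{\frac12,\partial K}$ via Lemma~\ref{lemma:poincare-bd}, and then to compare $|v|_{\frac12,\partial K}$ with the broken seminorm $|v|_{\frac12,\mathcal{E}_K}$ using the fact that $v$ is continuous and piecewise polynomial on $\partial K$. Concretely, first I would apply Lemma~\ref{lemma:poincare-bd} with $\Gamma=\partial K$ (noting $\overline{v}^{\partial K}=0$ is \emph{not} assumed here, only $\overline{v}^{\Gamma}=0$ on some connected $\Gamma$; so one must be a bit careful — see below) to get $\norm{v}_{0,e}\le\norm{v}_{0,\partial K}\lesssim h_K^{1/2}|v|_{\frac12,\partial K}$ after inserting and controlling the mean. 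The remaining and essential task is then the reverse-type estimate
\begin{equation*}
|v|_{\frac12,\partial K}^2 \lesssim n_{\mathcal{E}_K}\,|v|_{\frac12,\mathcal{E}_K}^2
\end{equation*}
for $v\in B_p(\partial K)$, which is exactly the inequality~\eqref{eq:broken2all} whose validity for continuous piecewise polynomials is cited from~\cite{Cao2004pre}; the novelty in this lemma is to track the explicit $n_{\mathcal{E}_K}$ dependence of that constant.

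The key steps, in order, are: (1) split the double integral defining $|v|_{\frac12,\partial K}^2$ over the $n_{\mathcal{E}_K}^2$ pairs of edges $(e_i,e_j)$; the diagonal terms $i=j$ sum exactly to $|v|_{\frac12,\mathcal{E}_K}^2$, so only the off-diagonal cross terms $\int_{e_i}\int_{e_j}|v(\bm x)-v(\bm y)|^2/|\bm x-\bm y|^2\,\dd s\dd s$ need bounding. (2) For a cross term, use continuity of $v$ at shared vertices to write $v(\bm x)-v(\bm y)$ as a telescoping sum of differences $v(\bm z_{k+1})-v(\bm z_k)$ along the chain of edges connecting the arcs containing $\bm x$ and $\bm y$, plus the two end contributions $v(\bm x)-v(\bm z)$ and $v(\bm z')-v(\bm y)$ on $e_i$ and $e_j$ themselves. (3) Each single-edge oscillation $\norm{v-\overline v^{\,e}}_{0,e}^2$ or $|v(\bm a_e)-v(\bm b_e)|^2$ is comparable, \emph{for a polynomial of fixed degree $p$ on $e$}, to $|v|_{\frac12,e}^2$ by a scaling/equivalence-of-norms-on-$\mathbb P_p$ argument on the reference edge; this is where continuity-plus-piecewise-polynomial is indispensable (it fails for general $H^{1/2}$ functions, as noted via~\cite[Thm.~1.5.2.3]{Grisvard1985}). (4) A Cauchy--Schwarz over the telescoping chain of length at most $n_{\mathcal{E}_K}$ produces one factor of $n_{\mathcal{E}_K}$; summing the resulting bound over all $n_{\mathcal{E}_K}^2$ ordered pairs and dividing by $|\partial K|$ (which enters through the Poincaré step) yields the clean power $n_{\mathcal{E}_K}^{1/2}h_e^{1/2}$ after taking square roots. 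Finally I would combine (1)--(4) with the first reduction to conclude~\eqref{eq:poincare-brokenhalf}.

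I expect the main obstacle to be step (3) together with the bookkeeping in step (4): one must show that on each edge the polynomial constraint converts the \emph{local} $1/2$-seminorm $|v|_{\frac12,e}$ into an honest control of both the edge-mean oscillation and the nodal jump, with a constant depending only on $p$ and not on $h_e$ or on the aspect ratio of $K$ — this uses that all norms on the finite-dimensional space $\mathbb P_p(\hat e)$ modulo constants are equivalent. The subtle point is that the chain of intermediate vertices $\bm z_k$ introduces one "hop" per edge, so a naive triangle inequality would cost $n_{\mathcal{E}_K}^2$; the gain down to a single power of $n_{\mathcal{E}_K}$ comes from applying Cauchy--Schwarz to the sum of at most $n_{\mathcal{E}_K}$ telescoping terms rather than bounding each by the maximum, and then from the observation that, because $v$ is a \emph{fixed-degree} polynomial on each edge, the jump $|v(\bm a_{e'})-v(\bm b_{e'})|$ on any edge $e'$ in the chain is itself bounded by $|v|_{\frac12,e'}$, so that squaring and summing over the (at most $n_{\mathcal{E}_K}$) chain edges, and then over the $n_{\mathcal{E}_K}^2$ pairs, reproduces $n_{\mathcal{E}_K}\cdot\sum_{e'}|v|_{\frac12,e'}^2 = n_{\mathcal{E}_K}|v|_{\frac12,\mathcal{E}_K}^2$. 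Carefully verifying that no extra geometric (aspect-ratio) constants sneak in through the reference-element scaling — in particular that the degenerate triangle $T_e$ used elsewhere plays no role here since everything lives on $\partial K$ — is the part that needs the most attention.
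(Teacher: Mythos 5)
Your route does not deliver the estimate as stated, for two separate reasons. First, the reduction $\norm{v}_{0,e}\le \norm{v}_{0,\partial K}\lesssim h_K^{1/2}|v|_{\frac12,\partial K}$ produces the factor $h_K^{1/2}$, not $h_e^{1/2}$, and once the localization to $e$ has been discarded there is no mechanism left to recover it. The edge-local weight $h_e^{1/2}$ is the whole point of the lemma: it is what makes \eqref{eq:stab-shortedge}, i.e.\ $\sum_{e} h_e^{-1}\norm{v_h-\Pi v_h}_{0,e}^2\le n_{\mathcal{E}_K}^2|v_h-\Pi v_h|_{\frac12,\mathcal{E}_K}^2$, hold uniformly in the presence of short edges; with $h_K^{1/2}$ in its place one picks up the unbounded ratio $h_K/h_e$. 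Second, the reverse inequality $|v|_{\frac12,\partial K}^2\lesssim n_{\mathcal{E}_K}|v|_{\frac12,\mathcal{E}_K}^2$ that your steps (1)--(4) aim at cannot be obtained with a constant depending only on $n_{\mathcal{E}_K}$ and $p$. In the off-diagonal terms $\int_{e_i}\int_{e_j}|v(\bm{x})-v(\bm{y})|^2|\bm{x}-\bm{y}|^{-2}$ the points $\bm{x},\bm{y}$ can be arbitrarily close (adjacent edges near their common vertex, or non-adjacent edges of a pinched polygon), so after telescoping one must integrate quantities like $|v(\bm{x})-v(\bm{z})|^2$ against $\int_{e_j}|\bm{x}-\bm{y}|^{-2}\dd s(\bm{y})\eqsim \operatorname{dist}(\bm{x},e_j)^{-1}$; this unavoidably brings in the interior angles and the ratios of edge lengths, the classical price being a factor $\log\bigl(h_K/\min_{e}h_e\bigr)$ (cf.\ \cite{Cao2004pre}). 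The paper explicitly designs the analysis to bypass \eqref{eq:broken2all} for exactly this reason (see the remark on the removal of the $\log$ factor), so routing the proof back through $|\cdot|_{\frac12,\partial K}$ defeats the purpose of the lemma.

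The paper's argument is instead one-dimensional and edge-local. Since $\overline{v}^{\Gamma}=0$ and $v$ is continuous on the connected set $\Gamma$, the mean value theorem gives $v(\bm{\xi})=0$ for some $\bm{\xi}\in\Gamma$; for $\bm{x}\in e$ one writes $v(\bm{x})=\int_{\Gamma_{\bm{x}}}\partial_t v\,\dd s$ along the boundary path from $\bm{\xi}$ to $\bm{x}$, bounds this (for $v\in B_1(\partial K)$) by $\sum_{e'}|v(\bm{b}_{e'})-v(\bm{a}_{e'})|=\sum_{e'}|v|_{\frac12,e'}$, applies Cauchy--Schwarz over the at most $n_{\mathcal{E}_K}$ edges in the path to get $\norm{v}_{\infty,e}^2\le n_{\mathcal{E}_K}|v|_{\frac12,\mathcal{E}_K}^2$, and then integrates over $e$ — this last step is precisely where the factor $h_e$ enters. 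The case $p>1$ is reduced to $p=1$ by interpolating at the vertices together with the artificial vertex $\bm{\xi}$, plus a Bramble--Hilbert and inverse estimate on each edge. Your observation in step (3) — that $|v(\bm{b}_e)-v(\bm{a}_e)|\lesssim|v|_{\frac12,e}$ for fixed-degree polynomials by a scale-invariant norm equivalence — is correct and is the one ingredient shared with the paper, but the surrounding architecture must be this pointwise path argument rather than a comparison with the unbroken $H^{1/2}(\partial K)$ seminorm.
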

\begin{proof}
First let $v\in B_1(\partial K)$, the mean value 
theorem implies that $v(\bm{\xi}) = 0$ for some $\bs \xi \in \Gamma$. Without loss of 
generality, $\bm{\xi}$ is assumed to be different with any given vertex on $\partial K$. 
Denote $\Gamma_{\bm{x}}$ as the curve along $\partial K$ from $\bm{\xi}$ to $\bm{x}$ 
for any $\bm{x}\in e$:
\begin{equation}\label{eq:error-2}
v(\bm{x}) = \int_{\Gamma_{\bm{x}}} \nabla v\cdot \bm{t}\,\dd s
\leq \sum_{\substack{e\subset \partial K,\\ e\cap \Gamma_{\bm{x}} \neq \varnothing 
}}\int_{e} |\partial_e v|\,\dd s
= \sum_{\substack{e\subset \partial K,\\ e\cap \Gamma_{\bm{x}} \neq \varnothing 
}} |v(\bm{b}_e) - v(\bm{a}_e)|.
\end{equation}
As a result, integrating on $e$ yields the estimate~\eqref{eq:poincare-brokenhalf} 
for $v\in B_1(\partial K)$:
\begin{equation}
\norm{v}_{0,e}^2 \leq  n_{\mathcal{E}_K}\, h_e
\sum_{\substack{e'\subset \partial K,\\ e'\cap \Gamma_{\bm{x}} \neq \varnothing 
}}
\,|v(\bm{b}_{e'}) - v(\bm{a}_{e'})|^2 = n_{\mathcal{E}_K} h_e
\sum_{e'\cap\Gamma\neq \varnothing} |v|_{\frac12,e'}^2.
\end{equation}
Now for $v\in B_p(\partial K)$, the point $\bm{\xi}$, where 
$v$ vanishes, is treated as an artificial new vertex on $\partial K$. With this 
additional vertex $\bm{\xi}$ on $\partial K$, let $v_I\in B_1(\partial K)$ be $v$'s 
linear nodal interpolation, and $\mathcal{E}'_K$ be the collection of the edges with 
two new edges, which have $\bm{\xi}$ as an end point, replacing the one edge in 
$\mathcal{E}_K$. Then by a standard Bramble--Hilbert estimate 
(\cite[Proposition 6.1]{Dupont-Scott1980}) and the same argument above for $v_I$, it 
holds that
\begin{equation}
\norm{v}_{0,e} \leq \norm{v-v_I}_{0,e} + \norm{v_I}_{0,e} \lesssim
h_e^{1/2} |v|_{\frac12,e} + (n_{\mathcal{E}_K}+1)^{1/2} h_{e}^{1/2} 
|v_I|_{\frac12,\mathcal{E}'}.
\end{equation}
By a standard inverse inequality and the interpolation error estimates,
\begin{equation}\label{eq:error-3}
|v_I|_{\frac12,\mathcal{E}'_K} \leq |v|_{\frac12,\mathcal{E}_K'} + 
|v-v_I|_{\frac12,\mathcal{E}'_K}
\lesssim |v|_{\frac12,\mathcal{E}'_K} +
\left(\sum_{e \in \mathcal{E}'_K} h_e^{-1}\norm{v-v_I}_{0,e}^2\right)^{1/2}
\lesssim |v|_{\frac12,\mathcal{E}'_K}.
\end{equation}
The lemma follows immediately from the fact that $|v|_{\frac12,\mathcal{E}'_K}\leq 
|v|_{\frac12,\mathcal{E}_K}$ by the definition of the $1/2$-seminorm 
\eqref{eq:norm-half}.
\end{proof}

\begin{corollary}[Equivalence between $\norm{\cdot}_{\infty,\partial K}$ and 
$|\cdot|_{\frac12,\mathcal{E}_K}$]
\label{corollary:normeq-infhalf} The following 
norm equivalence holds with a constant depending only on $n_{\mathcal{E}_K}$ and $p$:
\begin{equation}
\norm{v}_{\infty,\partial K} \lesssim 
|v|_{\frac12,\mathcal{E}_K} \lesssim \norm{v}_{\infty,\partial K}, \qquad  v\in 
B_p(\partial K),\; \overline{v}^{\partial K} = 0.
\end{equation}
\end{corollary}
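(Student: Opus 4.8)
The plan is to establish the two inequalities separately, reducing each to an edgewise estimate combined with a scaled inverse inequality for univariate polynomials, whose constant depends only on the degree $p$.

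First I would treat the right inequality $|v|_{\frac12,\mathcal{E}_K}\lesssim \norm{v}_{\infty,\partial K}$ edge by edge. Fix $e\subset \partial K$ and recall $v|_e\in\mathbb{P}_p(e)$. Writing $v(\bm x)-v(\bm y)=\int_{\bm y}^{\bm x}\partial_e v\,\dd s$ along $e$ gives $|v(\bm x)-v(\bm y)|\le |\bm x-\bm y|\,\norm{\partial_e v}_{\infty,e}$, so directly from the definition~\eqref{eq:norm-half} one obtains $|v|_{\frac12,e}^2\le h_e^2\,\norm{\partial_e v}_{\infty,e}^2$. A Markov-type inverse inequality on the interval $e$ (scaling from a fixed reference interval, constant depending only on $p$) gives $\norm{\partial_e v}_{\infty,e}\lesssim h_e^{-1}\norm{v}_{\infty,e}$, hence $|v|_{\frac12,e}\lesssim\norm{v}_{\infty,e}\le\norm{v}_{\infty,\partial K}$. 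Summing the squares over the $n_{\mathcal{E}_K}$ edges then yields $|v|_{\frac12,\mathcal{E}_K}\lesssim n_{\mathcal{E}_K}^{1/2}\norm{v}_{\infty,\partial K}$.

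For the left inequality $\norm{v}_{\infty,\partial K}\lesssim|v|_{\frac12,\mathcal{E}_K}$, I would invoke Lemma~\ref{lemma:poincare-brokenhalf} with $\Gamma=\partial K$, which is connected, the hypothesis $\overline v^{\partial K}=0$ being exactly the required $\overline v^{\Gamma}=0$. This gives, for every edge $e\subset\partial K$, the bound $\norm{v}_{0,e}\lesssim n_{\mathcal{E}_K}^{1/2}h_e^{1/2}|v|_{\frac12,\mathcal{E}_K}$. Combining it with a scaled inverse inequality $\norm{v}_{\infty,e}\lesssim h_e^{-1/2}\norm{v}_{0,e}$ valid for $v|_e\in\mathbb{P}_p(e)$ (again with a constant depending only on $p$), the powers of $h_e$ cancel and $\norm{v}_{\infty,e}\lesssim n_{\mathcal{E}_K}^{1/2}|v|_{\frac12,\mathcal{E}_K}$. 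Since $v$ is continuous on the compact set $\partial K$, its supremum is attained on some edge, so taking the maximum over all $e\subset\partial K$ produces $\norm{v}_{\infty,\partial K}\lesssim n_{\mathcal{E}_K}^{1/2}|v|_{\frac12,\mathcal{E}_K}$.

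Both constants then depend only on $n_{\mathcal{E}_K}$ and $p$, as asserted. The only nonroutine ingredients are the two univariate inverse inequalities: these are classical, but the step where care is needed is verifying that, after the affine change of variables to a fixed reference interval, their constants are genuinely scale-free and depend on nothing but $p$. Everything else is a direct consequence of the already-established Lemma~\ref{lemma:poincare-brokenhalf} and elementary manipulations of the broken $1/2$-seminorm~\eqref{eq:brokenhalfnorm}.
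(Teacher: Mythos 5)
Your proposal is correct, and the two directions split cleanly: for the lower bound $\norm{v}_{\infty,\partial K}\lesssim|v|_{\frac12,\mathcal{E}_K}$ you do exactly what the paper does, namely combine Lemma~\ref{lemma:poincare-brokenhalf} (with $\Gamma=\partial K$, where the hypothesis $\overline v^{\partial K}=0$ is used) with the edgewise inverse estimate $\norm{v}_{\infty,e}\lesssim h_e^{-1/2}\norm{v}_{0,e}$. For the upper bound $|v|_{\frac12,\mathcal{E}_K}\lesssim\norm{v}_{\infty,\partial K}$ your route is genuinely different: the paper inserts the linear interpolant $v_I$, uses the nodal-difference formula $|v_I|_{\frac12,e}=|v_I(\bm b_e)-v_I(\bm a_e)|$ together with the inverse-type bound $|v-v_I|_{\frac12,e}\lesssim h_e^{-1/2}\norm{v-v_I}_{0,e}$ as in~\eqref{eq:error-3}, whereas you bound the integrand of~\eqref{eq:norm-half} pointwise by $\norm{\partial_e v}_{\infty,e}^2$ via the fundamental theorem of calculus and then apply the scaled Markov inequality $\norm{\partial_e v}_{\infty,e}\lesssim p^2h_e^{-1}\norm{v}_{\infty,e}$. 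Your version is more elementary and self-contained (no interpolation error estimate, no inverse inequality for the $1/2$-seminorm of $v-v_I$), at the cost of an explicit $p^2$ in the constant where the paper's interpolant argument keeps the $p$-dependence hidden inside standard estimates; both yield constants depending only on $n_{\mathcal{E}_K}$ and $p$, as required. The one point you flag yourself, scale-invariance of the two univariate inverse inequalities after mapping to a reference interval, is indeed the only thing to check, and it holds by the usual finite-dimensionality/homogeneity argument.
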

\begin{proof}
The first inequality is a direct consequence of Lemma 
\ref{lemma:poincare-brokenhalf} and a standard inverse estimate on each edge 
$\norm{v}_{\infty,e}\lesssim h_e^{-1/2} \norm{v}_{0,e}$. 
For the second one, one can use a similar argument with~\eqref{eq:error-3}. Let 
$v_I$ be the linear interpolant of $v$ on $\partial K$
\begin{equation*} 
|v|_{\frac12,\mathcal{E}_K}\leq |v_I|_{\frac12,\mathcal{E}_K}
+ |v-v_I|_{\frac12,\mathcal{E}_K} \lesssim 
\left\{\sum_{e\subset \partial K} \!\Big(|v_I(\bm{b}_{e}) - v_I(\bm{a}_{e})|^2
+ h_e^{-1}\norm{v-v_I}_{0,e}^2\Big)\right\}^{1/2}.
\end{equation*}
The corollary follows by $v=v_I$ at the vertices, and
$h_e^{-\frac12}\norm{v-v_I}_{0,e} \lesssim \norm{v}_{\infty,e}$.
\end{proof}

\begin{remark}[difference between estimates~\eqref{eq:poincare-bd} and 
\eqref{eq:poincare-brokenhalf}]\rm
We remark that for an $H^{\frac12}(\partial K)$ function,~\eqref{eq:poincare-bd} is 
an estimate in the $1/2$-seminorm on the whole $\partial K$. While for a VEM 
function which is a piecewise polynomial on $\partial K$, a more delicate 
Poincar\'{e} inequality can be obtained in broken $1/2$-seminorm when the zero 
average is imposed. $\Box$
\end{remark}

For an edge $e\subset \partial K$, construct a trapezoid $K_e\subset K$ with base $e$ from $T_e$, which is the triangle with height $l_e$ defined 
in~\eqref{eq:local-le}, by connecting the 
midpoints of the two edges adjacent to $e$ in $T_e$.
With a slight abuse of notation, we still denote the height of $K_e$ as $l_e$ for base $e$.

In the following proof, one can always start with a rectangle with side $h_e$ and 
height $l_e$. The general trapezoid can be transformed into the square by 
considering a parametrization (see Figure \ref{fig:anielem}(c)) for a quadrilateral 
with 
vertices $\bm{a}_i$ 
($i=1,\dots,4$). Without loss of generality, $\bm{a}_1$ is assumed to be the 
origin. For any $\bm{x}\in K$, and $(\xi,\eta)\in \widehat{S}:= (0,1)^2$, consider 
the following bilinear mapping $\bm{\psi}: (\xi,\eta)\mapsto \bm{x}$:
\begin{equation}
\label{eq:para-quad}
\bm{x} = \bm{\psi}(\xi,\eta) 
= (1-\xi) \eta\, \bm{a}_3 + \xi(1-\eta)\,\bm{a}_2 + \xi\eta \,\bm{a}_4.
\end{equation}
If the two opposite angles, sharing $e$ as one side, are uniformly bounded above and 
below, and the top edge has comparable length with the base $e$, then it is 
straightforward to verify that
\begin{equation}
\label{eq:para-dd}
\|\partial_{\xi}\bm{x}\|_{\infty, K}\lesssim h_e, \quad \|\partial_{\eta}\bm{x} 
\|_{\infty, K}\lesssim l_e, \quad |J_{\bm{\psi}}| \eqsim l_eh_e.
\end{equation}

\begin{lemma}[Poincar\'{e} inequality with average zero on a boundary edge]
\label{lemma:poincare-e}
Let $K\subset \mathbb{R}^2$ be a Lipschitz polygon, and let $e\subset \partial K$ be 
an edge satisfying the condition: there exists a trapezoid $K_e\subset K$ 
of height $l_e$ and base $e$, with two angles adjacent to $e$ uniformly bounded 
above and below. Then the following 
inequality holds:
\begin{equation}
\label{eq:poincare-eA}
\norm{v - \overline{v}^e}_{0,K_e} \lesssim (l_e h_e)^{1/2} |v|_{\frac12,e} + l_e 
\norm{\nabla v}_{0,K_e}.
\end{equation}
Moreover, if $K$ is convex, then
\begin{equation}
\label{eq:poincare-eR}
\norm{v - \overline{v}^e}_{0,K} \lesssim \frac{|K|^{1/2}}{|K_e|^{1/2}} \, h_{K} 
\norm{\nabla v}_{0,K}.
\end{equation}
\end{lemma}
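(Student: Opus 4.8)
The plan is to prove \eqref{eq:poincare-eA} first on a rectangle, transport it to a general trapezoid by a change of variables, and then derive \eqref{eq:poincare-eR} separately by bridging through the mean value $\overline{v}^K$.

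First, take $K_e=R:=(0,h_e)\times(0,l_e)$ with $e=(0,h_e)\times\{0\}$. I would split $v-\overline{v}^e=\bigl(v(x,y)-v(x,0)\bigr)+\bigl(v(x,0)-\overline{v}^e\bigr)$. Writing $v(x,y)-v(x,0)=\int_0^y\partial_y v(x,t)\,\dd t$, applying Cauchy--Schwarz in $t$ and integrating over $R$ gives a bound $l_e\norm{\partial_y v}_{0,R}\le l_e\norm{\nabla v}_{0,R}$ for the first piece. The second piece is constant in $y$, so its $L^2(R)$-norm equals $l_e^{1/2}\norm{v(\cdot,0)-\overline{v}^e}_{0,e}$, and $\norm{v(\cdot,0)-\overline{v}^e}_{0,e}\le h_e^{1/2}|v|_{\frac12,e}$ is exactly the edge Poincar\'e inequality of Lemma~\ref{lemma:poincare-bd} applied to $v-\overline{v}^e$ with $\Gamma=e$. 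Summing the two estimates yields \eqref{eq:poincare-eA} on $R$.

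Next, for a general trapezoid $K_e$ I would pull back to $R=(0,h_e)\times(0,l_e)$ via the map $\Phi\colon R\to K_e$ obtained by composing the bilinear parametrization \eqref{eq:para-quad} with the anisotropic scaling $(\xi,\eta)\mapsto(h_e\xi,l_e\eta)$, arranged so that $\Phi$ restricts to the identity on $e$ (choose local coordinates with $e$ running from the origin to $(h_e,0)$). \emph{The essential point, and the step I expect to be the main obstacle, is that $K_e$ must not be rescaled to the unit square}: that would replace the factor $l_e$ in front of the gradient by $h_e$ and destroy precisely the anisotropic gain \eqref{eq:poincare-eA} is meant to capture. Rescaling instead to the rectangle $R$ of the same two dimensions, the estimates \eqref{eq:para-dd} give $\norm{D\Phi}_\infty\lesssim 1$ and $|\det D\Phi|\eqsim 1$ \emph{uniformly in the aspect ratio} $h_e/l_e$, hence $\norm{(D\Phi)^{-1}}_\infty\lesssim 1$ as well. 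A routine change of variables then shows that $\norm{v-\overline{v}^e}_{0,K_e}$, $\norm{\nabla v}_{0,K_e}$, and $|v|_{\frac12,e}$ (the last unchanged since $\Phi|_e=\mathrm{id}$) are comparable to the corresponding quantities for $\hat v=v\circ\Phi$ on $R$, and the rectangle case then gives \eqref{eq:poincare-eA} in general.

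Finally, for convex $K$ I would write $\norm{v-\overline{v}^e}_{0,K}\le\norm{v-\overline{v}^K}_{0,K}+|K|^{1/2}\bigl|\overline{v}^K-\overline{v}^e\bigr|$, bound the first term by $(h_K/\pi)\norm{\nabla v}_{0,K}$ using the convex Poincar\'e inequality (Lemma~\ref{lemma:poincare-convex}), and for the second use $\overline{v}^K-\overline{v}^e=|e|^{-1}\int_e(\overline{v}^K-v)\,\dd s$ so that $\bigl|\overline{v}^K-\overline{v}^e\bigr|\le h_e^{-1/2}\norm{v-\overline{v}^K}_{0,e}$. Applying the trace inequality \eqref{eq:tr-l2} to $w=v-\overline{v}^K$ on $T_e$, with $\norm{w}_{0,T_e}\le\norm{w}_{0,K}\le(h_K/\pi)\norm{\nabla v}_{0,K}$, $\norm{\nabla w}_{0,T_e}\le\norm{\nabla v}_{0,K}$, and $l_e\le h_e\le h_K$, gives $\norm{w}_{0,e}\lesssim l_e^{-1/2}h_K\norm{\nabla v}_{0,K}$, hence $\bigl|\overline{v}^K-\overline{v}^e\bigr|\lesssim(h_e l_e)^{-1/2}h_K\norm{\nabla v}_{0,K}\eqsim|K_e|^{-1/2}h_K\norm{\nabla v}_{0,K}$ since $|K_e|\eqsim h_e l_e$. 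Collecting the two terms and using $1\le|K|^{1/2}/|K_e|^{1/2}$ (as $K_e\subset K$) gives \eqref{eq:poincare-eR}. The only bookkeeping caveats are the harmless factor-of-two between the height in \eqref{eq:local-le} and the rescaled height of $K_e$, and the nonobtuseness of $K_e$'s base angles (automatic in the construction), which keeps $K_e$ inside the vertical strip over $e$.
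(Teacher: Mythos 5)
Your proof is correct. For \eqref{eq:poincare-eA} it is essentially the paper's argument in different clothing: the paper pulls back to the unit square via \eqref{eq:para-quad} and lets the Jacobian bounds \eqref{eq:para-dd} carry the two scales $h_e$ and $l_e$, then applies the fundamental theorem of calculus in the $\eta$-direction together with the edge Poincar\'e inequality \eqref{eq:poincare-bd}; your version pulls back to the rectangle $(0,h_e)\times(0,l_e)$ instead, which is the same computation with the anisotropy kept in the reference domain rather than in the Jacobian. For \eqref{eq:poincare-eR} your route is genuinely (if mildly) different: the paper first upgrades \eqref{eq:poincare-eA} to $\norm{v-\overline{v}^e}_{0,K_e}\lesssim(l_e+h_e)\norm{\nabla v}_{0,K}$ via the fractional trace inequality \eqref{eq:trlh} (this is \eqref{eq:poincare-eA-1}) and then bridges $\overline{v}^e\to\overline{v}^{K_e}\to\overline{v}^K$, picking up $|K|^{1/2}/|K_e|^{1/2}$ from the mean comparison on $K_e$; you bridge $\overline{v}^e\to\overline{v}^K$ in one step, controlling $|\overline{v}^K-\overline{v}^e|\le h_e^{-1/2}\norm{v-\overline{v}^K}_{0,e}$ with the $L^2$ trace inequality \eqref{eq:tr-l2} on $T_e$ and Lemma~\ref{lemma:poincare-convex}, recovering the same factor as $|K|^{1/2}(h_el_e)^{-1/2}$. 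Your version has the small advantage of not needing \eqref{eq:poincare-eA} at all for the convex case; the paper's reuses \eqref{eq:poincare-eA} and so needs one fewer trace-type tool on the edge. One cosmetic remark: the inequality $l_e\le h_e$ you invoke need not hold in general (a long height over a short edge is allowed), but your estimate only uses $l_e\le h_K$ and $h_e\le h_K$, both of which follow from $T_e\subset K$, so nothing breaks.
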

\begin{proof}
Applying the parametrization $\bm{x} = \bm{\psi}(\xi,\eta)$ in 
\eqref{eq:para-quad} for any $\bm{x}\in K_e$, define $\widehat v(\xi, \eta) := 
v\big(\bm{\psi}(\xi,\eta)\big)$ for $(\xi, \eta)\in \widehat{S}$. By the fundamental 
theorem of calculus,
\begin{equation}
\widehat v(\xi,\eta) - \overline{v}^{e} 
= \widehat v(\xi,0) - \overline{v}^{e}
+ \int^{\eta}_{0} \partial_{\eta} 
\widehat v(\xi,\tau)\,\dd\tau.
\end{equation}
By the relation~\eqref{eq:para-dd} and Young's inequality, 
\begin{align*}
\norm{v - \overline{v}^e}_{0,K}^2 
\lesssim l_e h_e \norm{\widehat v - \overline{v}^e}_{0,\widehat{S}}^2
\lesssim l_e h_e\left ( \norm{\widehat v - \overline{v}^e}_{0,\hat e}^2 
+ \|\partial_{\eta}\widehat v\|_{0,\widehat{S}}^2 \right ),
\end{align*}
where $\hat e = \{\xi\in (0,1), \eta = 0\}$ is the pre-image of $e$ in the reference 
square. Using~\eqref{eq:para-dd} and 
Poincar\'e inequality~\eqref{eq:poincare-eA}, the first term can be bounded by
$$
h_e\norm{\widehat v - \overline{v}^e}_{0,\hat e}^2 = \norm{v - 
\overline{v}^e}_{0,e}^2\le h_e|v|_{\frac12,e}^2.
$$
The estimate~\eqref{eq:poincare-eA} then follows from:
$$
l_e h_e \|\partial_{\eta}\widehat v\|_{0,S}^2\lesssim \int_S |\nabla v(\bm{x})|^2 
|\partial_{\eta}\bs x|^2 |J_{\bm{\psi}}| \dd \xi \dd \eta 
\lesssim l_e^2\,\| \nabla v\|_{0,K_e}^2.
$$
To obtain~\eqref{eq:poincare-eR}, one can apply the trace inequality~\eqref{eq:trlh} 
to get
\begin{equation}
\label{eq:poincare-eA-1}
\norm{v- \overline{v}^{e}}^2_{0,K_e}\lesssim 
l_e h_e |v|_{\frac12,e}^2 + l_e^2 \norm{\nabla v}_{0,K_e}^2 \lesssim 
(l_e+h_e)^2\|\nabla v\|_{0,K}^2.
\end{equation}
To bridge the inequality from $K_e$ to $K$, using the triangle inequality, 
Poincar\'e inequality~\eqref{poincare}, $h_e\eqsim h_K, l_e \leq h_K$, and the 
estimate above yield
\begin{align}
\norm{v- \overline{v}^{e}}_{0,K}
& \leq \norm{v- \overline{v}^K}_{0,K} + \norm{\overline{v}^K- 
\overline{v}^{K_e}}_{0,K}
+ \norm{\overline{v}^{K_e}-\overline{v}^{e}}_{0,K}
\nonumber \\
&\lesssim h_K\norm{\nabla v}_{0,K}
+\frac{|K|^{1/2}}{|K_e|^{1/2}}\left ( \norm{v- \overline{v}^K}_{K_e} + \norm{v- 
\overline{v}^{e}}_{K_e}\right )
\nonumber \\
&\lesssim \frac{|K|^{1/2}}{|K_e|^{1/2}} \, h_{K} 
\norm{\nabla v}_{0,K}.
\end{align}

\end{proof}

\section*{Acknowledgment}
The authors appreciate the anonymous reviewers for valuable suggestions and 
insightful comments, which improved an early version of the paper.

\end{document}